\documentclass[a4paper]{article}

\usepackage{graphicx}
\usepackage{amsmath, amssymb, amsfonts, amsthm}
\usepackage{hyperref}
\usepackage{xspace}
\usepackage{color}
\usepackage{marginnote}
\usepackage{fancybox}

\def\R{{\mathbb R}}
\def\al{\alpha}
\def\be{\beta}
\def\ga{\gamma}

\def\eps{\varepsilon}
\def\la{\lambda}
\def\phi{\varphi}

\DeclareMathOperator{\e}{e}
\DeclareMathOperator{\im}{im}
\DeclareMathOperator{\sign}{sign}
\DeclareMathOperator{\cone}{cone}
\DeclareMathOperator{\conv}{conv}

\DeclareMathOperator{\supp}{supp}

\newcommand{\trans}{\mathsf{T}} %transpose
\newcommand{\mal}{\! \cdot}
\newcommand{\bd}{\partial}
\newcommand{\abs}[1]{\lvert #1 \rvert}
\newcommand{\dd}[2]{\frac{\mathrm{d} #1}{\mathrm{d} #2}}
\newcommand{\DD}[2]{\frac{\partial #1}{\partial #2}}
\newcommand{\zp}{{\oplus}}
\newcommand{\pp}{(+,\ldots,+)^\trans}

\newcommand{\ce}[1]{{\sf{#1}}}

\newcommand{\Fc}{F_c}
\newcommand{\ixl}[2]{I_{#1,#2}}
\newcommand{\Seq}{X}
\newcommand{\seq}{x}
\newcommand{\xa}{x^*} % alternative x in Minty's lemma
\newcommand{\pa}{\tilde w} % parameter in example
\newcommand{\iiis}{iv}

\newcommand{\nondeg}{non\-degenerate\xspace}
\newcommand{\degdeg}{degenerate\xspace}

\newtheorem{thm}{Theorem}%[section]
\newtheorem{pro}[thm]{Proposition}
\newtheorem{lem}[thm]{Lemma}
\newtheorem{cor}[thm]{Corollary}
\theoremstyle{definition}
\newtheorem{dfn}[thm]{Definition}
\newtheorem{exa}[thm]{Example}

\makeatletter
\def\blfootnote{\xdef\@thefnmark{}\@footnotetext}
\makeatother

\parindent0ex
\parskip1ex

%%%%%%%%% %%%%%%%%% %%%%%%%%% %%%%%%%%% %%%%%%%%%

\begin{document}

\title{%
On the bijectivity of families of exponential/generalized polynomial maps
}

\author{
Stefan~M\"uller, Josef~Hofbauer, Georg~Regensburger
}

% \date{Received: date / Accepted: date}

\maketitle

\begin{abstract}
\noindent
We start from a parametrized system of $d$ generalized polynomial equations (with real exponents)
for $d$ positive variables, involving $n$ generalized monomials with $n$ positive parameters.
Existence and uniqueness of a solution for all parameters and for all right-hand sides
is equivalent to the bijectivity of (every element of) a family of generalized polynomial/exponential maps.
We characterize the bijectivity of the family of exponential maps in terms of two linear subspaces arising from the coefficient and exponent matrices, respectively.
In particular, we obtain conditions in terms of sign vectors of the two subspaces
and a nondegeneracy condition involving the exponent subspace itself.
Thereby, all criteria can be checked effectively.
Moreover, we characterize when the existence of a unique solution is robust with respect to small perturbations of the exponents or/and the coefficients.
In particular, we obtain conditions in terms of sign vectors of the linear subspaces or, alternatively,
in terms of maximal minors of the coefficient and exponent matrices. 
Finally, we present applications to chemical reaction networks
with (generalized) mass-action kinetics. 

\vspace{1ex} \noindent
{\bf Keywords:}
global invertibility, Hadamard's theorem, Descartes' rule,
sign vectors, oriented matroids, perturbations, robustness, deficiency zero theorem

\vspace{1ex} \noindent
{\bf AMS subject classification:} 
12D10, 26C10, 52B99, 52C40 
\end{abstract}

\blfootnote{
\scriptsize

\noindent
{Stefan~M\"uller} $\cdot$ Josef~Hofbauer \\
Faculty of Mathematics, University of Vienna, Oskar-Morgenstern-Platz 1, 1090 Wien, Austria \\[1ex]
{Georg Regensburger} \\
Institute for Algebra, Johannes Kepler University Linz, Altenberger Stra{\ss}e 69, 4040 Linz, Austria \\[1ex]
Corresponding author: 
\href{mailto:st.@univie.ac.at}{st.mueller@univie.ac.at}
}

%%%%%%%%% %%%%%%%%% %%%%%%%%% %%%%%%%%% %%%%%%%%%

\section{Introduction}

Given two matrices $W = (w^1, \ldots, w^n)$, $\tilde W = (\tilde w^1, \ldots, \tilde w^n) \in \R^{d \times n}$ with \parbox[t]{6ex}{$d \le n$} and full rank,
consider the parametrized system of generalized polynomial equations 
\[
\sum_{j = 1}^n w_{ij} \, c_j \, x_1^{\tilde w_{1j}} \cdots x_d^{\tilde w_{d\hspace{.3pt}j}} = y_i , \quad i=1, \ldots , d
\]
for $d$ positive variables $x_i>0$ (and right-hand sides $y_i$),
involving the `monomials' $c_j \, x_1^{\tilde w_{1j}} \cdots x_d^{\tilde w_{d\hspace{.3pt}j}} = c_j \, x^{\tilde w^j}$, $j=1, \ldots , n$,
in particular,
the $n$ positive parameters~$c_j>0$.
In other words, $x \in \R^d_{>0}$, $y \in \R^d$, and $c \in \R^n_{>0}$.
As in the theory of fewnomials~\cite{Khovanskii1991,Sotille2011},
the monomials are given, however, with a positive parameter associated to every monomial.

Writing the vector of monomials as $c \circ x^{\tilde W} \in \R^n_{>0}$,
thereby introducing $x^{\tilde W} \in \R^n_{>0}$ as $(x^{\tilde W})_j = x^{\tilde w^j}$
and denoting componentwise multiplication by $\circ$,
yields the compact form
\[
W (c \circ x^{\tilde W}) = y .
\]
Note that, for the existence of a positive solution $x$, the right-hand side $y$ must lie in the interior of $C = \cone W$,
the polyhedral cone generated by the columns of $W$.
The question arises
whether the above equation system has a unique positive solution $x \in \R^d_{>0}$,
for all right-hand sides $y \in C^\circ \subseteq \R^d$ and all positive parameters $c \in \R^n_{>0}$.
This question is equivalent to
whether the generalized polynomial map $f_c \colon \R^d_{>0} \to C^\circ \subseteq \R^d$,
\[
f_c(x) = W (c \circ x^{\tilde W})
\]
or, equivalently, the exponential map $\Fc \colon \R^d \to C^\circ \subseteq \R^d$,
\[
\Fc(x) = W (c \circ \e^{\tilde W^\trans x})
\] 
is bijective for all $c \in \R^n_{>0}$.

In the context of chemical reaction networks (CRNs) with {\em generalized} mass-action kinetics \cite{mueller:regensburger:2012,mueller:regensburger:2014},
the question is equivalent to whether every set of complex-balanced equilibria (an `exponential manifold')
intersects every stoichiometric class (an affine subspace) in exactly one point.
For a motivation from CRNs, see Section~\ref{sec:appl} or~\cite{CMPY2018}.
The assumption of mass-action kinetics corresponds to $W=\tilde W$,
and in this case
there is indeed exactly one complex-balanced equilibrium in every stoichiometric class. 

In case $W = \tilde{W}$,
the map $\Fc$ also appears in toric geometry \cite{Fulton1993}, where it is related to moment maps,
and in statistics \cite{PachterSturmfels2005}, where it is related to log-linear models.
The following result (called Birch's Theorem in~\cite{Sturmfels2002,PachterSturmfels2005,CraciunDickensteinShiuSturmfels2009,CraciunGarcia-PuenteSottile2010,GopalkrishnanMillerShiu2014,CMPY2018}) guarantees the bijectivity of $\Fc$ for all $c > 0$. 

\begin{thm}[\cite{Fulton1993}, Section~4.2] \label{fulton} 
Let $W=\tilde{W}$.
Then the map $\Fc$ is a real analytic isomorphism of $\R^d$ onto $C^\circ$ for all $c>0$.
\end{thm}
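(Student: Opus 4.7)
The plan is to realize $\Fc$ as the gradient of a strictly convex real-analytic potential and then deduce surjectivity onto $C^\circ$ by a coercivity argument. Writing $W = (w^1,\ldots,w^n)$, define
\[
\phi_c(x) = \sum_{j=1}^{n} c_j \, \e^{w^j \cdot x} .
\]
Differentiating gives $\nabla \phi_c(x) = W(c \circ \e^{W^\trans x}) = \Fc(x)$, and the Hessian of $\phi_c$ at $x$ equals $W \diag(c \circ \e^{W^\trans x}) W^\trans$, which is positive definite since $W$ has rank $d$ and the diagonal factor has strictly positive entries. Hence $\phi_c$ is strictly convex and $\Fc = \nabla \phi_c$ is injective. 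Moreover, $\Fc(x)$ is a strictly positive combination of $w^1,\ldots,w^n$, so $\Fc(\R^d) \subseteq \relint(\cone W) = C^\circ$.

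For surjectivity, fix $y \in C^\circ$ and consider the strictly convex real-analytic function $g(x) = \phi_c(x) - y \cdot x$; its critical points are precisely the preimages of $y$ under $\Fc$. I would establish coercivity by showing $g(tu) \to +\infty$ as $t \to +\infty$ for every unit vector $u$, then invoke the standard fact that a convex function on $\R^d$ with this property is genuinely coercive (the missing intermediate ray segment can be recovered from convexity via $tu_n \to tu$). Set $m(u) = \max_{j} w^j \cdot u$. If $m(u) > 0$, a summand $c_{j_0} \e^{t\, w^{j_0}\cdot u}$ in $\phi_c(tu)$ blows up exponentially and dominates the linear term $t\, y \cdot u$. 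If $m(u) \le 0$, then $u$ lies in the polar cone $\{v : W^\trans v \le 0\}\setminus\{0\}$, and the characterization of the interior of a full-dimensional convex cone (via strict separating hyperplanes) yields $y \cdot u < 0$; since $\phi_c \ge 0$, this gives $g(tu) \ge -t\, y \cdot u \to +\infty$. By strict convexity, $g$ attains a unique global minimum at some $x^* \in \R^d$, at which $\nabla g(x^*) = 0$, i.e., $\Fc(x^*) = y$.

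Combining the two steps, $\Fc \colon \R^d \to C^\circ$ is a bijection. Its Jacobian, the Hessian of $\phi_c$, is positive definite at every $x$, so the real-analytic inverse function theorem yields a real-analytic inverse and makes $\Fc$ a real-analytic isomorphism. The delicate point in this plan is the coercivity step, and within it the use of the hypothesis $y \in C^\circ$: one must cleanly connect "$y$ in the interior of $C$" with the strict sign condition $y \cdot v < 0$ for every nonzero $v$ in the polar cone, as this is the only place in the argument where $y \in C^\circ$ (rather than merely $y \in C$) actively enters.
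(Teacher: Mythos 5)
Your proof is correct. The paper itself states Theorem~\ref{fulton} without proof, citing \cite{Fulton1993}; your argument is essentially the classical convex-duality proof of Birch's theorem (realize $\Fc$ as $\nabla\phi_c$ for the strictly convex potential $\phi_c(x)=\sum_j c_j\e^{w^j\cdot x}$, get injectivity from strict convexity of $\phi_c$ and surjectivity by minimizing the coercive function $\phi_c(x)-y\cdot x$), which is also the route taken in the cited reference. The paper's own machinery would instead recover this statement as a special case of Theorem~\ref{bij} via Corollary~\ref{cor:bij}: there the engine is Hadamard's global inversion theorem, with injectivity characterized by sign vectors and surjectivity reduced to properness along rays (Lemmas~\ref{seq} and~\ref{ray}). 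The trade-off is clear: your variational argument is more elementary and self-contained, but it hinges on $\Fc$ being a gradient map, which is exactly what fails when $W\neq\tilde W$; the Hadamard route sacrifices that symmetry and is what allows the paper to treat the general case. All the individual steps you flag check out: a strictly positive combination of the columns of a full-rank $W$ does lie in $C^\circ=\relint(\cone W)$; a convex function tending to $+\infty$ along every ray is coercive (your continuity-plus-convexity patch for the limit direction is the standard argument); and for $y\in C^\circ$ and a nonzero $u$ with $W^\trans u\le 0$, one indeed gets $y\cdot u<0$, since $y\cdot u=0$ would make $\{z:z\cdot u=0\}$ a supporting hyperplane through an interior point of the full-dimensional cone $C$, forcing $u=0$. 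That last point is, as you say, the only place where interiority of $y$ is used, and it is precisely the boundary behaviour that the paper's ray condition~\eqref{rc} encodes in the general setting.
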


In this work, we characterize the simultaneous bijectivity of the maps $\Fc$ for all $c>0$ (for given coefficients $W$ and exponents $\tilde W$)
in terms of (sign vectors of) the linear subspaces $S = \ker W \subseteq \R^n$ and $\tilde S = \ker \tilde W \subseteq \R^n$, see Theorem~\ref{bij}.
Moreover, we characterize the robustness of bijectivity with respect to small perturbations 
of the exponents $\tilde W$ or/and the coefficients $W$,
corresponding to small perturbations of the subspaces $\tilde S$ and $S$ (in the Grassmannian),
see Theorems~\ref{pert_tildeS}, \ref{pert_S}, \ref{pert_S_tildeS}.

Sufficient conditions for bijectivity have been given in previous work~\cite{mueller:regensburger:2012},
using Brouwer degree, and parallel work~\cite{CMPY2018}, using differential topology.
For a smaller class of maps~\cite{G2009}, bijectivity has been proved, using Brouwer's fixed point theorem.

Our main technical tool is Hadamard's global inversion theorem
which essentially states that a $C^1$-map is a diffeomorphism
if and only if it is locally invertible and proper.
By previous results~\cite{CraciunGarcia-PuenteSottile2010,mueller:regensburger:2012},
the map $\Fc$ is locally invertible for all $c>0$ if and only if it is injective for all $c>0$
which can be characterized 
in terms of maximal minors of $W$ and $\tilde W$ or, equivalently,
in terms of sign vectors of the subspaces $S$ and $\tilde S$,
see Subsection~\ref{sec:inj}.
Most importantly, we show that $\Fc$ is proper if and only if it is `proper along rays'
and that properness for all $c>0$ can be characterized in terms of sign vectors of $S$ and $\tilde S$, 
together with a nondegeneracy condition depending on the subspace $\tilde S$ itself.

The crucial role of sign vectors in the characterization of existence and uniqueness of positive solutions to parametrized polynomial equations
suggests a comparison with Descartes' rule of signs for univariate (generalized) polynomials~\cite{Struik1969,Laguerre1883,Jameson2006}.
%Consider a univariate polynomial and order the monomials by their exponents. 
%Now, let $s$ be the number of sign changes in the sequence of (nonzero) coefficients,
%and let $p$ be the number of positive roots (where multiple roots are counted separately).
%Then, Descartes' rule~\cite{Struik1969} states that $p \le s$ and $s-p$ is even. 
%As shown by Laguerre~\cite{Laguerre1883,Jameson2006} the same statement holds for generalized monomials (with real exponents).
%More recently it has been shown that the upper bound is sharp~\cite{AlbouyFu2014}:
%for given sign sequence, there exist coefficients such that $p=s$.
%Hence 
A sharp rule~\cite{AlbouyFu2014} states that 
a univariate polynomial with given sign sequence has exactly one positive solution for all (positive) coefficients 
if and only if there is exactly one sign change.
Indeed, this statement follows from our main result % for univariate polynomials,
% The proof of equivalence involves a case distinction on which monomial plays the role of the right-hand side.
which can be seen as a multivariate generalization of the sharp Descartes' rule for exactly one positive solution.

\subsection*{Organization of the work and main results}

In Section~\ref{sec:fam}, we introduce the family of exponential maps $\Fc$ with $c>0$
and discuss previous results on injectivity.

In Section~\ref{sec:bij}, we present our main result, Theorem~\ref{bij}, 
characterizing the simultaneous bijectivity of the maps $\Fc$,
and the crucial Lemmas~\ref{seq} and \ref{ray},
regarding the properness of $\Fc$.
In Subsection~\ref{sec:special}, we discuss two extreme cases
regarding the geometry of the cone $C$,
% the polyhedral cone generated by the columns of~$W$;
namely, $C=\R^d$ and $C$ is pointed.
In Subsection~\ref{sec:sv}, we show that the simultaneous bijectivity of the maps $\Fc$ cannot be characterized in terms of sign vectors only, cf.~Example~\ref{exa:sv}.
Still, there are sufficient conditions for bijectivity in terms of sign vectors or in terms of faces of the Newton polytope,
cf.~Propositions~\ref{iii} and~\ref{newton}.

In Section~\ref{sec:robust}, we study the robustness of simultaneous bijectivity.
In Subsection~\ref{sec:exp}, we consider perturbations of the exponents $\tilde W$
and show that robustness of bijectivity is equivalent to robustness of injectivity
which can be characterized in terms of sign vectors, cf.~Theorem~\ref{pert_tildeS}.
The criterion involves the closure of a set of sign vectors
and represents a simple sufficient condition for bijectivity, cf.~Proposition~\ref{cc_bij}.
Equivalently, robustness can be characterized in terms of maximal minors.
% Lemma~\ref{pert_inj_cc}, Proposition~\ref{pert_inj=cc}, Proposition~\ref{cc_bij}, Example~\ref{exa:cc}, and Theorem~\ref{pert_tildeS}. cf.~Proposition~\ref{cc_det}.
In Subsection~\ref{sec:coeff}, we consider perturbations of the coefficients $W$
and characterize robustness of bijectivity again in terms of sign vectors (including another closure condition), cf.~Theorem~\ref{pert_S}.
In particular, robustness of bijectivity implies that either $C=\R^d$ or $C$ is pointed.
% In the latter case, the faces of $C$ are robustly generated.
% Proposition~\ref{cc'_i_iii}, Lemmas \ref{ii} and \ref{ii_pointed}, and Theorem~\ref{pert_S}.
Finally, in Subsection~\ref{sec:general}, 
we consider general perturbations (of both exponents and coefficients)
and characterize robustness of bijectivity
in terms of sign vectors and maximal minors, cf.~Theorem~\ref{pert_S_tildeS}.

In Section~\ref{sec:appl},
we present a derivation of our main problem from chemical reaction networks
and applications of our main results.
In particular, 
we formulate a deficiency zero theorem for generalized mass-action kinetics
and a robust deficiency zero theorem for (generalized) mass-action kinetics,
%the robustness of the deficiency zero theorem for mass-action kinetics
%with respect to small perturbations of the kinetic orders (from the stoichiometric coefficients)
cf.~Theorems~\ref{thm:generalized} and \ref{thm:generalizedrobust}.

Finally,
we provide appendices on % (A) a motivation from chemical reaction networks,
\eqref{sv} oriented matroids and \eqref{ta} a theorem of the alternative.

\subsection*{Notation}

We denote the positive real numbers by $\R_{>0}$ and the nonnegative real numbers by $\R_{\ge0}$. 
We write $x>0$ for $x \in \R^n_{>0}$ and $x \ge 0$ for $x \in \R^n_{\ge0}$.
For vectors $x,y \in \R^n$, we denote their scalar product by $x \cdot y$ and their componentwise (Hadamard) product by $x \circ y$.

For a vector $x \in \R^n$,
we obtain the sign vector $\sign(x) \in \{-,0,+\}^n$ by applying the sign function componentwise,
and we write
\[
\sign(S) = \{ \sign(x) \mid x \in S \}
\]
for a subset $S \subseteq \R^n$.

For a vector $x \in F^n$ with $F=\R$ or $F=\{-,0,+\}$, we denote its support by $\supp(x) = \{ i \mid x_i \neq 0 \}$. 
For a subset $X \subseteq F^n$,
we say that a nonzero vector $x \in X$ has (inclusion-)minimal support,
if $\supp(x') \subseteq \supp(x)$ implies $\supp(x') = \supp(x)$ for all nonzero $x' \in X$.

For a sign vector $\tau \in \{-,0,+\}^n$, we introduce 
\[
\tau^- = \{ i \mid \tau_i = -\}, \quad \tau^0 = \{ i \mid \tau_i = 0\}, \quad \text{and} \quad \tau^+ = \{ i \mid \tau_i = +\} .
\]
In particular, $\supp(\tau) = \tau^- \cup \tau^+$.
For a subset $T \subseteq \{-,0,+\}^n$, we write
\[
T_\zp = T \cap \{0,+\}^n . 
\]

The inequalities $0<-$ and $0<+$ induce a partial order on $\{-,0,+\}^n$:
for sign vectors $\tau, \rho \in \{-,0,+\}^n$, we write $\tau \le \rho$ if the inequality holds componentwise.
% For $x,y \in \R^n$, we say that $x$ {\em conforms to} $y$, if $\sign(x) \le \sign(y)$.
% 
The product on $\{-,0,+\}$ is defined in the obvious way.
For $\tau,\rho \in \{-,0,+\}^n$, we write $\tau \cdot \rho = 0$ ($\tau$ and $\rho$ are orthogonal)
if either $\tau_i \rho_i = 0$ for all $i$ or there exist $i,j$ with $\tau_i \rho_i = -$ and $\tau_j \rho_j = +$.
%Note that $\tau \perp \rho$ if and only if there are orthogonal vectors $x,y \in \R^n$ such that $\sign(x) = \tau$ and $\sign(y) = \rho$.
%
For $T \subseteq \{-,0,+\}^n$, we introduce the orthogonal complement
\[
T^\perp = \{ \tau \in \{-,0,+\}^n \mid \tau \cdot \rho = 0 \text{ for all } \rho \in T \} \, .
\]
% In particular, for a subspace $S$ of $\R^n$, $\sign(S^\perp)=\sign(S)^\perp$, cf.~\cite[Prop.~6.8]{Ziegler1995}.
%
Moreover, for $\tau,\rho \in \{-,0,+\}^n$,
we define the composition $\tau \circ \rho \in \{-,0,+\}^n$ as $(\tau \circ \rho)_i = \tau_i$ if $\tau_i \neq 0$ and  $(\tau \circ \rho)_i = \rho_i$ otherwise.

For a matrix $W \in \R^{d \times n}$, we denote its column vectors by $w^1, \ldots, w^n \in \R^d$.
For any natural number $n$, we define $[n] = \{1, \ldots , n\}$.
For $W \in \R^{d \times n}$ with $d \le n$ and $I \subseteq [n]$ of cardinality $d$,
we denote the square submatrix of $W$ with column indices in $I$ by $W_I$.
% and the corresponding maximal minor by $w_I = \det(W_I)$.

%%%%%%%%% %%%%%%%%% %%%%%%%%% %%%%%%%%% %%%%%%%%%

\section{Families of exponential maps} \label{sec:fam}

Let $W \in \R^{d \times n}$, $\tilde{W} \in \R^{\tilde{d} \times n}$ be matrices with $d, \tilde{d} \le n$ and full rank.
Further, let
\[
C = \cone W \subseteq \R^d % \quad \text{and} \quad \tilde C = \cone \tilde W \subseteq \R^{\tilde d}
\]
be the cone generated by the columns of $W$. % and $\tilde W$, respectively,
%and
%\[
%tilde P = \conv \tilde W \subseteq \R^{\tilde d}
%\]
%be the associated (Newton) polytope. 
%Since $W$ and $\tilde W$ have full rank, the cones $C$ and $\tilde C$ have nonempty interior, and we write $C^\circ = \inn(C)$.
Since $W$ has full rank, the cone $C$ has nonempty interior $C^\circ$.
Finally,
let $c>0$.
We define the exponential map
\begin{equation}
\begin{aligned}  \label{map}
\Fc \colon & \R^{\tilde{d}} \to C^\circ \subseteq \R^d \\
& x \mapsto 
W (c \circ \e^{\tilde W^\trans x} ) 
= \sum_{i=1}^n c_i \e^{\tilde{w}^i \mal x} w^i
\end{aligned}
\end{equation}
and the related subspaces
\begin{equation} \label{SS}
S = \ker W \subseteq \R^n \quad \text{and} \quad \tilde S = \ker \tilde W \subseteq \R^n .
\end{equation}

Note that injectivity and surjectivity of $\Fc$ only depend on $S$ and $\tilde S$.
In fact, let $V \in \R^{d \times n}$, $\tilde V \in \R^{\tilde{d} \times n}$ be such that $\ker V = S$, $\ker \tilde V = \tilde S$,
and let 
\[
G_c(x) = V (c \circ \e^{\tilde V^\trans x} ) 
\]
be the corresponding exponential map.
Then $V = U W$, $\tilde V = \tilde U \tilde W$ for invertible matrices $U \in \R^{d \times d}$, $\tilde U \in \R^{\tilde d \times \tilde d}$,
and
\[
G_c(x) = U \Fc(\tilde U^\trans x) .
\]

%%%%%%%%% %%%%%%%%% %%%%%%%%% %%%%%%%%% %%%%%%%%%

\subsection{Previous results on injectivity} \label{sec:inj}

In the context of multiple equilibria in mass-action systems \cite{CraciunFeinberg2005}
and geometric modeling \cite{CraciunGarcia-PuenteSottile2010},
where $d = \tilde d$,
it was shown that the map $\Fc$ is injective for all $c>0$
if and only if
$\Fc$ is a local diffeomorphism for all $c>0$.

\begin{thm}[Theorem~7 and Corollary 8 in \cite{CraciunGarcia-PuenteSottile2010}] \label{crac}
Let $\Fc$ be as in~\eqref{map} with $d = \tilde d$.
Then the following statements are equivalent:
\begin{enumerate}
\item
$\Fc$ is injective for all $c>0$.
\item
$\det(\DD{\Fc}{x}) \neq 0$ for all $x$ and all $c>0$.
\item
$\det(W_I) \det(\tilde W_I) \ge 0$ for all subsets $I \subseteq [n]$ of cardinality $d$ (or `$\le 0$' for all $I$)
and $\det(W_I) \det(\tilde W_I) \neq 0$ for some $I$.
\end{enumerate}
\end{thm}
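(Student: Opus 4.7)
The plan is to establish (ii) $\Leftrightarrow$ (iii) via a Cauchy--Binet expansion of the Jacobian determinant and (i) $\Leftrightarrow$ (ii) via a hyperbolic-sine reformulation, so that both failures reduce to the same sign-vector condition on $S = \ker W$ and the row space of $\tilde W$.

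A direct computation of the Jacobian gives
\[
\DD{\Fc}{x}(x) = W \, D(x,c) \, \tilde W^\trans, \qquad D(x,c) = \diag\bigl(c_1 \e^{\tilde w^1 \cdot x}, \ldots, c_n \e^{\tilde w^n \cdot x}\bigr).
\]
As $(x,c)$ ranges over $\R^d \times \R^n_{>0}$, the diagonal entries of $D$ range over all of $\R^n_{>0}$ (already at fixed $x=0$, via the choice of $c$). By the Cauchy--Binet formula,
\[
\det\Bigl(W D \tilde W^\trans\Bigr) = \sum_{|I|=d} \det(W_I) \det(\tilde W_I) \prod_{i \in I} D_{ii}.
\]
This generalized polynomial in $n$ positive variables $D_{ii}$ is nonzero for every positive assignment if and only if its coefficients $\det(W_I)\det(\tilde W_I)$ all have the same (weak) sign with at least one nonzero: sufficiency is immediate, and if two coefficients had opposite strict signs, making the corresponding $D_{ii}$ large (and the rest small) along each index set in turn produces a sign change, so a zero by continuity along a path in $\R^n_{>0}$. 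This gives (ii) $\Leftrightarrow$ (iii).

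For (i) $\Leftrightarrow$ (ii), the key trick is to rewrite a collision $\Fc(x) = \Fc(x')$, $x \neq x'$, by letting $\bar x = (x+x')/2$ and $v = (x-x')/2 \neq 0$, so that
\[
0 \;=\; \Fc(x) - \Fc(x') \;=\; 2 \sum_{i=1}^n c_i \e^{\tilde w^i \cdot \bar x} \sinh(\tilde w^i \cdot v)\, w^i \;=\; W u,
\]
with $u_i = 2 c_i \e^{\tilde w^i \cdot \bar x} \sinh(\tilde w^i \cdot v)$. Because each prefactor is positive and $\sinh$ preserves signs, $\sign(u) = \sign(\tilde W^\trans v)$, while $u \in S$. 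Conversely, (ii) fails iff there exist $v \neq 0$ and a positive diagonal $D$ with $W D \tilde W^\trans v = 0$; setting $u = D \tilde W^\trans v$ yields exactly the same sign-matching condition: $u \in S$, $\sign(u) = \sign(\tilde W^\trans v)$. Thus both failures are captured by the single combinatorial condition
\[
\exists\, v \in \R^d \setminus\{0\}, \; u \in S : \; \sign(u) = \sign(\tilde W^\trans v).
\]

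The main obstacle is closing the loop from this condition back to a genuine collision witnessing (i)'s failure. Given such $u, v$, one must choose $\bar x$ and $c > 0$ realizing $u_i = 2 c_i \e^{\tilde w^i \cdot \bar x} \sinh(\tilde w^i \cdot v)$ for every $i$. This is possible precisely because of the sign match: whenever $\tilde w^i \cdot v \neq 0$, the required $c_i$ has the sign of $u_i/\sinh(\tilde w^i \cdot v) > 0$, and whenever $\tilde w^i \cdot v = 0$ we have $u_i = 0$, so any $c_i > 0$ works. Setting $x = \bar x + v$, $x' = \bar x - v$ reverses the $\sinh$ identity and gives $\Fc(x) = \Fc(x')$. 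The assumption $d = \tilde d$ with $\tilde W$ of full rank ensures $\tilde W^\trans v \neq 0$ for $v \neq 0$, so $u \neq 0$, which is needed to rule out the trivial case. This closes the chain (i) $\Leftrightarrow$ (ii) $\Leftrightarrow$ (iii).
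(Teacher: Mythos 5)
Your proof is correct. Note that the paper itself offers no proof of this statement: Theorem~\ref{crac} is imported verbatim from Craciun--Garc\'ia-Puente--Sottile, and the surrounding text only points to an alternative sign-vector proof in \cite{mueller:regensburger:2012}. Your argument is essentially that standard one: the Cauchy--Binet expansion of $\det(W D \tilde W^\trans)$ over the positive orthant of diagonals handles (ii)$\Leftrightarrow$(iii), and the $\sinh$ reformulation of a collision reduces both the failure of (i) and the failure of (ii) to the existence of $v\neq 0$ and $u\in S$ with $\sign(u)=\sign(\tilde W^\trans v)$ --- which is exactly the negation of condition~3 in Theorem~\ref{inj}, so your route also makes the link to the sign-vector criterion explicit. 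One cosmetic remark: your closing sentence suggests $u\neq 0$ is needed to ``rule out the trivial case,'' but if $\tilde W^\trans v=0$ for some $v\neq 0$ the construction would still yield a genuine collision (and a singular Jacobian); the full-rank hypothesis simply makes this case vacuous, so nothing is actually at stake there.
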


In \cite{mueller:regensburger:2012}, we gave an alternative proof of this result
and extended it to the case $d\not=\tilde{d}$,
by using the sign vectors of the subspaces $S$ and $\tilde S$.

\begin{thm}[Theorem~3.6 in \cite{mueller:regensburger:2012}] \label{inj}
Let $\Fc$ be as in~\eqref{map} and $S,\tilde S$ be as in~\eqref{SS}.
Then the following statements are equivalent:
\begin{enumerate}
\item
$\Fc$ is injective for all $c>0$.
\item
$\Fc$ is an immersion for all $c>0$. \\
($\DD{\Fc}{x}$ is injective for all $x$ and all $c>0$.)
\item
$\sign(S) \cap \sign(\tilde S^\perp) = \{0\}$.
\end{enumerate}
\end{thm}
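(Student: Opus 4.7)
The plan is to prove the three-way equivalence by establishing (ii)$\Leftrightarrow$(iii) via a direct Jacobian computation, (iii)$\Rightarrow$(i) by tracking signs in an equation $F_c(x_1) = F_c(x_2)$, and (i)$\Rightarrow$(iii) by contrapositive construction of two points with equal image. The unifying theme is that in each case the failure or success of the property can be recoded as the simultaneous existence of a nonzero vector in $S$ and a vector in $\tilde S^\perp$ with the same sign vector; the positive parameters $c$ and the exponential enter only via positive diagonal matrices, so they do not affect signs.

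For (ii)$\Leftrightarrow$(iii), I compute
\[
\frac{\partial \Fc}{\partial x}(x) \;=\; W\, D\, \tilde W^\trans, \qquad D = \diag(c \circ \e^{\tilde W^\trans x}) > 0.
\]
This linear map fails to be injective iff there is $v\neq 0$ with $W D \tilde W^\trans v = 0$. Setting $u = D \tilde W^\trans v$, failure is equivalent to the existence of $u\in S\setminus\{0\}$ with $D^{-1}u \in \im \tilde W^\trans = \tilde S^\perp$. Since $D$ is a positive diagonal matrix, $\sign(D^{-1}u) = \sign(u)$. As $(x,c)$ ranges over $\R^{\tilde d}\times\R^n_{>0}$, the diagonal $D$ ranges over all positive diagonal matrices, so (ii) fails for some $x$, $c$ iff there exist $u\in S\setminus\{0\}$ and $w\in \tilde S^\perp$ with $\sign(u)=\sign(w)$, i.e.\ iff (iii) fails.

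For (iii)$\Rightarrow$(i), suppose $\Fc(x_1)=\Fc(x_2)$ and set $a_i = c_i \e^{\tilde w^i\cdot x_1}$, $b_i = c_i \e^{\tilde w^i\cdot x_2}$. Then $W(a-b)=0$, so $a-b\in S$. Because $\sign(\e^{t}-1)=\sign(t)$, componentwise one has $\sign(a_i - b_i) = \sign(\tilde w^i\cdot (x_1-x_2))$, so
\[
\sign(a-b) \;=\; \sign(\tilde W^\trans(x_1-x_2))
\]
with $\tilde W^\trans(x_1-x_2)\in\tilde S^\perp$. Condition (iii) forces $\tilde W^\trans(x_1-x_2)=0$, and full rank of $\tilde W$ then yields $x_1=x_2$. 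For (i)$\Rightarrow$(iii) by contrapositive, given nonzero $s\in S$, $t\in\tilde S^\perp$ with $\sign(s)=\sign(t)$, write $t=\tilde W^\trans z$ (possible since $\tilde S^\perp = \im \tilde W^\trans$), define
\[
c_i = \frac{s_i}{\e^{t_i}-1} \text{ for } i\in\supp(s), \qquad c_i = 1 \text{ otherwise},
\]
which is positive because $\sign(s_i)=\sign(t_i)=\sign(\e^{t_i}-1)$ on $\supp(s)=\supp(t)$. Then $\Fc(z)-\Fc(0) = W\bigl(c\circ(\e^{\tilde W^\trans z}-\mathbf 1)\bigr) = Ws = 0$, and $z\neq 0$ since $t\neq 0$ and $\tilde W$ has full rank, contradicting (i).

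The only subtle point is the claim in (ii)$\Leftrightarrow$(iii) that any positive diagonal $D$ is realized by some $(x,c)$, which is immediate because $c$ itself is a free positive parameter. The other potential pitfall is ensuring $z\neq 0$ (resp.\ $x_1\neq x_2$) in the constructions; both follow from the full-rank hypothesis on $\tilde W$. Everything else is a matter of carefully matching signs of $a-b$ with signs of $\tilde W^\trans(x_1-x_2)$ using the strict monotonicity of $t\mapsto \e^t-1$.
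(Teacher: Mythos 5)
Your proposal is correct. Note that the paper does not prove this theorem itself --- it quotes it as Theorem~3.6 of \cite{mueller:regensburger:2012} --- and your argument is essentially the sign-vector proof given there: both the Jacobian criterion (via $W D \tilde W^\trans$ with $D$ an arbitrary positive diagonal matrix) and the injectivity criterion (via $\sign(a-b)=\sign(\tilde W^\trans(x_1-x_2))$ using the strict monotonicity of $t \mapsto \e^t-1$) reduce to the existence of a common nonzero sign vector of $S$ and $\tilde S^\perp$, exactly as in the cited reference.
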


Theorems~\ref{crac} and~\ref{inj} characterize the simultaneous injectivity of $\Fc$ (with $d = \tilde d$) for all $c > 0$
equivalently in terms of maximal minors and sign vectors.

\begin{cor} \label{inj_det}
Let $S,\tilde S$ be subspaces of $\R^n$ of dimension $n-d$ (with $d \le n$).
For every $W, \tilde W \in \R^{d \times n}$ (with full rank $d$) such that $S = \ker W$ and $\tilde S = \ker \tilde W$,
the following statements are equivalent.
\begin{enumerate}
\item
$\sign(S) \cap \sign(\tilde S^\perp) = \{0\}$.
\item
$\det(W_I) \det(\tilde W_I) \ge 0$ for all subsets $I \subseteq [n]$ of cardinality $d$ (or `$\le 0$' for all $I$)
and $\det(W_I) \det(\tilde W_I) \neq 0$ for some $I$.
\end{enumerate}
\end{cor}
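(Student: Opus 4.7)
The plan is to observe that this corollary is an immediate consequence of the two preceding theorems, each of which characterizes the same property (simultaneous injectivity of the family $F_c$ for all $c>0$) but in different language.

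First, given subspaces $S, \tilde S \subseteq \R^n$ of dimension $n-d$, I fix any $W, \tilde W \in \R^{d \times n}$ of full rank $d$ with $\ker W = S$ and $\ker \tilde W = \tilde S$; such matrices exist by elementary linear algebra. Then I form the associated exponential map $F_c$ as in \eqref{map} (note that $d = \tilde d$ here, so Theorem~\ref{crac} applies).

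Next, I apply Theorem~\ref{inj} to conclude that condition~(i) of the corollary is equivalent to $F_c$ being injective for all $c > 0$. Independently, I apply Theorem~\ref{crac} to conclude that condition~(ii) is equivalent to the same property, $F_c$ being injective for all $c > 0$. Combining the two equivalences gives (i)~$\Leftrightarrow$~(ii).

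There is no real obstacle here; the only mild subtlety worth remarking on is the quantifier in the corollary statement: condition~(i) manifestly depends only on the subspaces $S, \tilde S$, whereas condition~(ii) is stated in terms of the specific matrices $W, \tilde W$. The argument above in fact shows that (ii) too is independent of the choice of $W, \tilde W$ realizing $S, \tilde S$, since it is equivalent to the injectivity of $F_c$, which by the remark at the start of Section~\ref{sec:fam} depends only on $S$ and $\tilde S$. This justifies the universal quantifier ``for every $W, \tilde W$'' in the corollary.
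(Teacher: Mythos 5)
Your proposal is correct and coincides with the paper's own (implicit) justification: the corollary is stated immediately after the remark that Theorems~\ref{crac} and~\ref{inj} characterize the same injectivity property in terms of maximal minors and sign vectors, respectively, so combining the two equivalences is exactly the intended argument. Your additional remark on why the universal quantifier over $W,\tilde W$ is legitimate (since injectivity of $\Fc$ depends only on $S$ and $\tilde S$, per the observation at the start of Section~\ref{sec:fam}) is a correct and welcome clarification.
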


In the language of oriented matroids, Corollary~\ref{inj_det} relates {\em chirotopes} (signs of maximal minors of $W$ and $\tilde W$)
to {\em vectors} (sign vectors of $S = \ker W$ and $\tilde S = \ker \tilde W$),
see also Appendix~\ref{sv}.
Thereby, the sign vector condition is symmetric with respect to $S$ and $\tilde S$. % (that is, $W$ and $\tilde{W}$).

\begin{cor}[Corollary~3.8 in \cite{mueller:regensburger:2012}] \label{inj_cor}
Let $S,\tilde S$ be subspaces of $\R^n$ of equal dimension.
Then
\[
\sign(S) \cap \sign(\tilde S^\perp) = \{0\}
\quad \text{if and only if} \quad
\sign(\tilde S) \cap \sign(S^\perp) = \{0\} .
\]
\end{cor}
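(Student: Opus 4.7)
The plan is to reduce the corollary to Corollary~\ref{inj_det}, exploiting the fact that the determinant characterization appearing there is manifestly symmetric in the two representing matrices.

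First I would let $d = n - \dim S = n - \dim \tilde S$ denote the common codimension and pick matrices $W, \tilde W \in \R^{d \times n}$ of full rank $d$ such that $\ker W = S$ and $\ker \tilde W = \tilde S$. Applying Corollary~\ref{inj_det} to the ordered pair $(S, \tilde S)$ with these representatives gives that $\sign(S) \cap \sign(\tilde S^\perp) = \{0\}$ is equivalent to the statement that the products $\det(W_I) \det(\tilde W_I)$, for $d$-element subsets $I \subseteq [n]$, all share a common sign and are not all zero. Applying the same corollary a second time, now to the pair $(\tilde S, S)$ with representatives $(\tilde W, W)$, gives that $\sign(\tilde S) \cap \sign(S^\perp) = \{0\}$ is equivalent to the analogous condition on the products $\det(\tilde W_I) \det(W_I)$. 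Since $\det(W_I) \det(\tilde W_I) = \det(\tilde W_I) \det(W_I)$, the two determinant conditions coincide, and the two sign-vector conditions are therefore equivalent.

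The only subtlety, and the point at which the equal-dimension hypothesis is consumed, is that Corollary~\ref{inj_det} requires both representing matrices to have the same row count $d$, so that the square submatrices $W_I$ and $\tilde W_I$ are indexed by the same $d$-subsets $I$. Conceptually this reflects the fact that the sign-vector condition, although written asymmetrically in $S$ and $\tilde S^\perp$, is really encoded by the common chirotope sign pattern of $W$ and $\tilde W$, which treats the two matrices on equal footing. The hard part, accordingly, is not in this corollary at all but in Corollary~\ref{inj_det} itself (and the oriented-matroid duality underlying it), which is already in hand by the time we reach the statement.
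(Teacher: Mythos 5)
Your proposal is correct and matches the paper's own route: the paper derives Corollary~\ref{inj_cor} precisely by observing that Corollary~\ref{inj_det} translates the sign-vector condition into the maximal-minor condition, which is manifestly symmetric in $W$ and $\tilde W$ (``Thereby, the sign vector condition is symmetric with respect to $S$ and $\tilde S$''). Your handling of the equal-dimension hypothesis, guaranteeing a common row count $d$ so the same $d$-subsets index both families of minors, is exactly the point that makes the double application of Corollary~\ref{inj_det} legitimate.
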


For a direct proof of Corollaries~\ref{inj_det} and~\ref{inj_cor},
see also \cite{Chaiken1996}.

In further works on injectivity of families of exponential/generalized polynomial maps,
the coefficient and exponent matrices need not have full rank,
and injectivity is studied on affine subspaces,
see~\cite{G2012,FW2012,MFR2016,BP2016}.

%%%%%%%%% %%%%%%%%% %%%%%%%%% %%%%%%%%% %%%%%%%%%
%%%%%%%%% %%%%%%%%% %%%%%%%%% %%%%%%%%% %%%%%%%%%
%%%%%%%%% %%%%%%%%% %%%%%%%%% %%%%%%%%% %%%%%%%%%

\section{Bijectivity} \label{sec:bij}

A necessary condition for the bijectivity of the map $\Fc$ is $d = \tilde d$.
In the rest of the paper, we consider $\Fc$ as in~\eqref{map} with $d = \tilde d$ and the related subspaces $S,\tilde S$ as in~\eqref{SS}.

A first sufficient condition for the bijectivity of the map $\Fc$ for all $c>0$ (in terms of sign vectors of $S$ and $\tilde S$)
was given in~\cite{mueller:regensburger:2012},
thereby extending Theorem~\ref{fulton}. % (Birch's Theorem).

\begin{thm}[Proposition~3.9 in~\cite{mueller:regensburger:2012}] \label{bij_suff}
% Let $\Fc$ be as in~\eqref{map} and $S,\tilde S$ as in~\eqref{SS}.
If $\sign(S) = \sign(\tilde S)$ and $\pp \in \sign(S^\perp)$,
then the map $\Fc$ is a real analytic isomorphism for all $c > 0$.
\end{thm}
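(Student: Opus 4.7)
The plan is to apply Hadamard's global inversion theorem to the real analytic map $\Fc \colon \R^d \to C^\circ$: both $\R^d$ and the open convex cone $C^\circ$ are connected and simply connected, so it suffices to show that $\Fc$ is (a) a local diffeomorphism and (b) proper onto $C^\circ$. Real analyticity of the inverse then follows from the analytic inverse function theorem.

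For (a), I would invoke Theorem~\ref{inj}: since $d = \tilde d$, it is enough to verify $\sign(S) \cap \sign(\tilde S^\perp) = \{0\}$. The hypothesis $\sign(S) = \sign(\tilde S)$ reduces this to $\sign(\tilde S) \cap \sign(\tilde S^\perp) = \{0\}$, which follows from plain orthogonality: if $x \in \tilde S$ and $y \in \tilde S^\perp$ share a sign vector $\sigma$, then $x_i y_i \ge 0$ for every $i$, so $0 = x \cdot y$ forces $\sigma = 0$.

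For (b), I would extract from $\pp \in \sign(S^\perp)$ a vector $v \in \R^d$ with $\lambda := W^\trans v > 0$; then $v \cdot w^i > 0$ for every column of $W$, whence $v \cdot y > 0$ on $C \setminus \{0\}$ and, in particular, $v \cdot y \ge \epsilon > 0$ on any compact $K \subset C^\circ$ (note that pointedness of $C$, implied by $\pp \in \sign(S^\perp)$, rules out $0 \in K$). Assume, for contradiction, that $(x_k) \subset \R^d$ is unbounded while $\Fc(x_k) \in K$; passing to a subsequence, set $\xi = \lim x_k / \|x_k\|$, $a_i = \tilde w^i \cdot \xi$, and $m = \max_i a_i$. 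If $m > 0$, some $\tilde w^i \cdot x_k \to \infty$, and
\[
v \cdot \Fc(x_k) = \sum_i \lambda_i \, c_i \, \e^{\tilde w^i \cdot x_k}
\]
(a sum of positive terms) blows up, contradicting the boundedness of $v \cdot K$. If $m \le 0$, put $J_- = \{i : a_i < 0\}$ and $J_0 = \{i : a_i = 0\}$; full rank of $\tilde W$ forces $J_- \neq \emptyset$, since $\tilde W^\trans \xi = 0$ would give $\xi = 0$. The sign vector $\sigma := \sign(\tilde W^\trans \xi) \in \sign(\tilde S^\perp)$ satisfies $\sigma^+ = \emptyset$, $\sigma^0 = J_0$, $\sigma^- = J_-$. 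Oriented matroid duality, combined with $\sign(S) = \sign(\tilde S)$, yields $\sign(S^\perp) = \sign(\tilde S^\perp)$, so I obtain $\mu \in \R^d$ with $\sign(W^\trans \mu) = \sigma$, i.e., $\mu \cdot w^i = 0$ on $J_0$ and $\mu \cdot w^i < 0$ on $J_-$. Then $-\mu \cdot y \ge \epsilon' > 0$ on $K$ (as $-\mu$ is a nonzero element of the dual cone), while
\[
-\mu \cdot \Fc(x_k) = \sum_{i \in J_-} |\mu \cdot w^i| \, c_i \, \e^{\tilde w^i \cdot x_k} \longrightarrow 0
\]
since $\tilde w^i \cdot x_k \to -\infty$ for $i \in J_-$, a contradiction.

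The main obstacle is properness in the case $m \le 0$: depending on the direction $\xi$, one must construct a nonzero dual vector $-\mu$ that annihilates the non-decaying terms indexed by $J_0$ and is strictly negative on the decaying terms indexed by $J_-$. This is precisely where the full identity $\sign(S) = \sign(\tilde S)$ enters, through the oriented matroid equality $\sign(S^\perp) = \sign(\tilde S^\perp)$; the positivity assumption $\pp \in \sign(S^\perp)$ contributes by covering the extremal subcase $J_0 = \emptyset$ (take $\mu = -v$) and by guaranteeing that the dual cone of $C$ is full-dimensional.
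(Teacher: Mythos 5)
Your proof is correct, but it takes a genuinely different route from the paper. Here, Theorem~\ref{bij_suff} is obtained as an immediate consequence of the main Theorem~\ref{bij}: Corollary~\ref{cor:bij} checks that $\sign(S)=\sign(\tilde S)$ alone implies conditions (i), (ii), (iii), using $\sign(S^\perp)=\sign(S)^\perp$ and the observation that $\pi\cdot\tilde\tau\neq 0$ for the relevant sign vectors; in particular the hypothesis $\pp\in\sign(S^\perp)$ turns out to be superfluous (the original proof in the cited reference used Brouwer degree). You instead give a direct, self-contained Hadamard argument: local invertibility from Theorem~\ref{inj}, and properness by extracting a limit direction $\xi$ from an unbounded sequence and splitting on $m=\max_i \tilde w^i\cdot\xi$. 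Your case $m>0$ is killed by the strictly positive functional $v$ supplied by $\pp\in\sign(S^\perp)$, and your case $m\le 0$ by a dual vector $\mu$ obtained from $\sign(S^\perp)=\sign(\tilde S^\perp)$; this is essentially Lemmas~\ref{seq} and~\ref{ray} specialized to the present hypotheses, where the nondegeneracy issue of condition (iii) disappears because $\pp\in\sign(S^\perp)$ forces $\sign(S)_\zp=\{0\}$, so the exponentially growing terms can never cancel. The trade-off is clear: your argument is elementary and avoids the general machinery, but it uses the extra hypothesis $\pp\in\sign(S^\perp)$ in an essential way (both for the functional $v$ and for bounding $v\cdot y$ away from zero on compact subsets of $C^\circ$), whereas the paper's derivation shows the conclusion already follows from $\sign(S)=\sign(\tilde S)$ alone.
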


As it will turn out, $\sign(S) = \sign(\tilde S)$ is sufficient for bijectivity, 
and the technical condition $\pp \in \sign(S^\perp)$ in \cite{mueller:regensburger:2012} is not needed, cf.~Corollary~\ref{cor:bij}.
We note that Theorems~\ref{crac}, \ref{inj}, and \ref{bij_suff} allowed a first multivariate generalization of Descartes' rule of signs 
for at most/exactly one positive solution,
see~\cite{MFR2016}.

In order to characterize the simultaneous bijectivity of the map $\Fc$ for all $c>0$,
we start with the following observation.

\begin{pro}
The following statements are equivalent.
\begin{enumerate}
\item
$\Fc$ is bijective for all $c>0$.
\item
$\Fc$ is a diffeomorphism for all $c>0$.
\item
$\Fc$ is a real analytic isomorphism for all $c>0$.
\end{enumerate}
\end{pro}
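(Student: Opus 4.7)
The implications (iii) $\Rightarrow$ (ii) $\Rightarrow$ (i) are immediate from the definitions, so the content is in the converse direction. I would first show (i) $\Rightarrow$ (ii). Assume $\Fc$ is bijective for all $c>0$. In particular, $\Fc$ is injective for all $c>0$, so Theorem~\ref{inj} applies and yields that $\Fc$ is an immersion for all $c>0$, i.e., the Jacobian $\DD{\Fc}{x}$ is injective at every point. Since we are in the case $d = \tilde d$, the Jacobian is a square $d \times d$ matrix, and injectivity is equivalent to invertibility. Hence, for every $c > 0$, the map $\Fc \colon \R^d \to C^\circ$ is a local diffeomorphism by the inverse function theorem, and combined with global bijectivity this is already the definition of a diffeomorphism.

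For (ii) $\Rightarrow$ (iii), I would simply note that $\Fc(x) = \sum_{i=1}^n c_i \e^{\tilde w^i \mal x} w^i$ is real analytic. Once $\Fc$ is known to be a diffeomorphism, the real analytic version of the inverse function theorem applied at each point of $C^\circ$ shows that $\Fc^{-1}$ is real analytic, and therefore $\Fc$ is a real analytic isomorphism.

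There is no serious obstacle here: the proposition is essentially bookkeeping once Theorem~\ref{inj} is available. The only subtlety worth flagging is that both the domain $\R^d$ and the codomain $C^\circ \subseteq \R^d$ are open subsets of $\R^d$ of the same dimension, which is what lets us upgrade ``immersion'' to ``local diffeomorphism''; this uses the hypothesis $d = \tilde d$ that was imposed at the start of Section~\ref{sec:bij}. Everything else is standard.
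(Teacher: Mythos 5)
Your proposal is correct and follows essentially the same route as the paper: deduce from bijectivity for all $c>0$ the simultaneous injectivity, invoke Theorem~\ref{crac} or~\ref{inj} to get a nonvanishing Jacobian everywhere, and upgrade the resulting local (real analytic) diffeomorphism to a global one using bijectivity. You correctly emphasize that the implication from injectivity to nonsingular Jacobian is a statement about the whole family over all $c>0$, which is exactly the point the paper's proof relies on.
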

\begin{proof}
Let $\Fc$ be bijective for all $c>0$.
In particular, it is injective, and $\det(\DD{\Fc}{x}) \neq 0$ for all $x$ and $c>0$,
by Theorems~\ref{crac} or \ref{inj}.
Hence, $\Fc$ is a local diffeomorphism for all $c>0$.
%and hence a diffeomorphism for all $c>0$.
Further, $\Fc$ is real analytic and hence a local real analytic isomorphism for all $c>0$.
\end{proof}

Most importantly, we will use Hadamard's global inversion theorem.

\begin{thm}[\cite{Hadamard1906}, Theorem A in~\cite{Gordon1972}]
A $C^1$-map $F \colon \R^d \to \R^d$ is a diffeomorphism
if and only if
the Jacobian $\det(\DD{F}{x}) \neq 0$ for all $x \in \R^d$ and $\abs{F(x)} \to \infty$ whenever $\abs{x} \to \infty$.
\end{thm}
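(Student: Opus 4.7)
The forward direction is largely routine. If $F$ is a diffeomorphism then $F^{-1}$ is $C^1$, and differentiating $F^{-1} \circ F = \mathrm{id}$ via the chain rule forces $\DD{F}{x}$ to be invertible everywhere. For the growth condition, if $|x_n| \to \infty$ but $F(x_n)$ remained in a bounded set, one could pass to a subsequence with $F(x_n) \to y$ and then use continuity of $F^{-1}$ to conclude $x_n \to F^{-1}(y)$, a contradiction.

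For the substantive direction, my plan is to combine the inverse function theorem with a properness/covering-space argument. First, the hypothesis $\det(\DD{F}{x}) \neq 0$ together with the inverse function theorem shows that $F$ is a local $C^1$-diffeomorphism at every point; in particular $F$ is an open map. Second, the growth hypothesis $|F(x)| \to \infty$ as $|x| \to \infty$ is exactly the statement that preimages of bounded sets are bounded, hence preimages of compact sets are compact (using continuity of $F$), i.e.\ $F$ is proper. A proper continuous map into a Hausdorff space is closed, so $F(\R^d)$ is both open and closed in $\R^d$; since $\R^d$ is connected and $F(\R^d) \neq \emptyset$, we get surjectivity $F(\R^d) = \R^d$.

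The main step is then upgrading the proper local homeomorphism $F$ to a global homeomorphism. I would show $F$ is a covering map: fix $y \in \R^d$; properness makes the fiber $F^{-1}(y) = \{x_1,\ldots,x_k\}$ finite (a discrete, compact subset of $\R^d$), and by the local diffeomorphism property, one can choose disjoint open neighborhoods $U_i \ni x_i$ each mapped diffeomorphically onto some neighborhood $V_i$ of $y$. Then using properness once more, one shrinks these $V_i$ and their intersection to a neighborhood $V$ of $y$ such that $F^{-1}(V) = U_1 \sqcup \cdots \sqcup U_k$ is a disjoint union of sheets (this shrinking step, which uses properness to rule out escaping preimages, is the one delicate point). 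Hence $F$ is a covering map onto $\R^d$. Since $\R^d$ is simply connected (and nonempty), any connected covering must be a homeomorphism, so $k=1$ and $F$ is bijective with continuous inverse. Finally, because $F$ is a local $C^1$-diffeomorphism, the global inverse is $C^1$, and $F$ is a diffeomorphism as claimed.

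The hardest part of the argument is the covering-space step, i.e.\ carefully pinning down the trivialising neighbourhood $V$ of $y$; this is where properness is used in an essential, non-formal way, beyond merely giving finiteness of the fiber. Once covering is established, simple-connectedness of the target makes the rest immediate.
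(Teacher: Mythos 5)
The paper does not prove this statement at all: it is quoted as a classical result (Hadamard's global inversion theorem, Theorem~A in Gordon's article) and used as a black box, so there is no internal proof to compare against. Judged on its own, your argument is correct and is the standard modern proof. The forward direction is fine. In the converse, the chain properness $\Rightarrow$ closed image, plus openness from the inverse function theorem, plus connectedness of $\R^d$, gives surjectivity; the fiber of a proper local homeomorphism is compact and discrete, hence finite; and the shrinking argument you flag as delicate does work (if no neighborhood $V$ of $y$ had $F^{-1}(V)\subseteq U_1\cup\dots\cup U_k$, you could pick $y_m\to y$ and preimages $x_m$ in the closed set $\R^d\setminus\bigcup U_i$; properness applied to the compact set $\{y\}\cup\{y_m\}$ forces a convergent subsequence whose limit lies both in the fiber over $y$ and outside $\bigcup U_i$, a contradiction). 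Simple connectedness of the base then forces a one-sheeted covering, and local $C^1$-invertibility makes the global inverse $C^1$. Two minor points of hygiene: the claim that a proper continuous map into a Hausdorff space is closed requires local compactness (or compact generation) of the target, which $\R^d$ of course has; and you should note that the fiber is nonempty before invoking the covering argument, which your surjectivity step already supplies. For comparison, Gordon's own proof in the cited reference proceeds by lifting line segments emanating from a fixed value $F(x_0)$ and using properness to continue the lift, which is the same monodromy idea packaged without explicit covering-space language; your version is, if anything, cleaner.
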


Obviously, we need a slightly more general version of this result 
which follows from Satz II in \cite{BanachMazur1934} or Theorem B in~\cite{Gordon1972}.
\begin{thm} \label{had}
Let $U \subseteq \R^d$ be open and convex. %(or simply connected)
A $C^1$-map $F \colon \R^d \to U$ is a diffeomorphism
if and only if
the Jacobian $\det(\DD{F}{x}) \neq 0$ for all $x \in \R^d$ and $F$ is proper.
\end{thm}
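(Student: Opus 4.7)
The plan is to handle the two directions separately. The forward direction is routine: if $F\colon\R^d\to U$ is a diffeomorphism, then applying the chain rule to $F^{-1}\circ F=\mathrm{id}_{\R^d}$ forces $\det(\DD{F}{x})\neq 0$ everywhere, and the continuity of $F^{-1}$ immediately gives properness, because for compact $K\subseteq U$ the preimage $F^{-1}(K)$ is the continuous image of $K$ under $F^{-1}$, hence compact.

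For the converse, I would argue via covering-space theory rather than redo the monodromy/path-lifting content of the classical Hadamard theorem. First, nonvanishing of the Jacobian plus the inverse function theorem makes $F$ a local diffeomorphism; in particular $F$ is an open map, so $F(\R^d)$ is open in $U$. Properness of $F$ into the locally compact Hausdorff space $U$ makes $F$ a closed map, so $F(\R^d)$ is also closed. Since $U$ is convex and therefore connected, and since $F(\R^d)$ is nonempty, surjectivity follows. Next, I would show that a proper local homeomorphism $F\colon\R^d\to U$ is a covering map: for each $y\in U$ the fiber $F^{-1}(y)$ is discrete by local injectivity and compact by properness, hence finite, say $\{x_1,\ldots,x_k\}$; choose pairwise disjoint neighborhoods $V_i\ni x_i$ on which $F$ restricts to a diffeomorphism, and use that $F(\R^d\setminus\bigcup_i V_i)$ is closed and avoids $y$ to find a neighborhood of $y$ that is evenly covered by the $V_i$. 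Finally, since $U$ is convex it is simply connected, so any covering of $U$ by a connected space is trivial; because $\R^d$ is connected, $F$ must be a homeomorphism, and together with being a local diffeomorphism it is a global $C^1$-diffeomorphism onto $U$.

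An alternative, more economical route would be to fix a diffeomorphism $\phi\colon U\to\R^d$ (which exists because every open convex subset of $\R^d$ is diffeomorphic to $\R^d$) and apply the classical Hadamard theorem to $\phi\circ F\colon\R^d\to\R^d$; the chain rule preserves nonvanishing of the Jacobian, and properness of $\phi\circ F$ follows by writing $(\phi\circ F)^{-1}(K)=F^{-1}(\phi^{-1}(K))$ for compact $K\subseteq\R^d$. The main obstacle in either approach is somewhat hidden: the covering-map step relies on the nontrivial topological fact that proper local homeomorphisms are covering maps, while the reduction approach relies on the nontrivial differential-topological fact that every open convex set is diffeomorphic to $\R^d$. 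I would prefer the covering-map route, as it makes transparent the role of convexity (simple connectedness) and of properness (the closed-map property), which are the two ingredients actually doing work.
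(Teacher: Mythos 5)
Your argument is correct. Note, however, that the paper does not prove Theorem~\ref{had} at all: it simply cites Satz~II of Banach--Mazur and Theorem~B of Gordon, so there is no in-paper proof to match. Your forward direction is the standard one. For the converse, your covering-space route is sound: nonvanishing Jacobian gives a local diffeomorphism (hence an open map), properness into the locally compact Hausdorff space $U$ gives a closed map, so $F(\R^d)$ is nonempty, open and closed in the connected set $U$ and $F$ is surjective; the fact that a proper local homeomorphism is a covering map (finite fibers, disjoint slice neighborhoods, shrink using closedness of $F(\R^d\setminus\bigcup_i V_i)$ and openness of the $F(V_i)$) is standard and your sketch captures the essential steps; and simple connectedness of the convex set $U$ together with connectedness of $\R^d$ forces the covering to be trivial, whence $F$ is a homeomorphism and, being a local $C^1$-diffeomorphism, a global one. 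This is essentially the same mechanism as in the cited sources (Gordon's proof of the classical case proceeds by showing the map is a covering via path lifting), so your proof can be read as supplying the details the paper delegates to the literature. Your alternative reduction via a diffeomorphism $U\cong\R^d$ also works, but it trades one nontrivial input (proper local homeomorphisms are coverings) for another (every open convex subset of $\R^d$ is diffeomorphic to $\R^d$); the covering argument is the more self-contained of the two and, as you say, isolates exactly where convexity and properness enter.
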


Recall that a map $F$ between two topological spaces is {\em proper}, if  $F^{-1}(K)$ is compact for each compact subset $K$ of the target space.
This is obviously necessary for the inverse $F^{-1}$ to be continuous.

\begin{lem} \label{had_lem}
Let $U \subseteq \R^d$ be open.
A continuous map $F \colon \R^d \to U$ is proper if and only if,
for sequences $\seq_n$ in $\R^d$ with $\abs{\seq_n} = 1$ and $\seq_n \to \seq$ and $t_n$ in $\R_{>0}$ with $t_n \to \infty$,
$F(\seq_n t_n) \to y$ implies $y \in \bd U$. % $y \notin U$ (and hence $y \in \bd U$).
\end{lem}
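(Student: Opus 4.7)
The plan is to prove both directions using the standard compactness characterization: in $\R^d$, a set is compact iff it is closed and bounded, together with the fact that $K$ compact and $K \subseteq U$ (open) implies $K \cap \bd U = \emptyset$.

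For the forward direction, assume $F$ is proper and take sequences $\seq_n$, $t_n$ as in the statement with $F(\seq_n t_n) \to y$. Since $F$ has codomain $U$, each $F(\seq_n t_n) \in U$, so $y \in \overline U$. If $y$ were in $U$, I would choose a closed ball $\overline B(y,\eps) \subseteq U$; this ball is then a compact subset of $U$, so by properness $F^{-1}(\overline B(y,\eps))$ is compact in $\R^d$, hence bounded. But eventually $F(\seq_n t_n) \in \overline B(y,\eps)$, so eventually $\seq_n t_n \in F^{-1}(\overline B(y,\eps))$, contradicting $|\seq_n t_n| = t_n \to \infty$. Hence $y \in \overline U \setminus U = \bd U$.

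For the converse, assume the sequential condition and let $K \subseteq U$ be compact. Then $K$ is closed in $\R^d$ (being compact in $\R^d$), so $F^{-1}(K)$ is closed by continuity of $F$; it remains to show $F^{-1}(K)$ is bounded. Suppose not: choose $z_n \in F^{-1}(K)$ with $|z_n| \to \infty$ and set $t_n = |z_n|$, $\seq_n = z_n/t_n$, so $|\seq_n|=1$ and $t_n \to \infty$. By compactness of the unit sphere, pass to a subsequence so that $\seq_n \to \seq$ with $|\seq|=1$; by compactness of $K$, pass to a further subsequence so that $F(\seq_n t_n) = F(z_n) \to y \in K$. The hypothesis forces $y \in \bd U$, but $K \subseteq U$ together with $U$ open gives $K \cap \bd U = \emptyset$, a contradiction.

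The only mildly subtle point is matching the setup of the hypothesis in the converse: one has to normalize $z_n = \seq_n t_n$ and then extract subsequences twice (first on the sphere, then on $K$) so that the hypothesis actually applies. Everything else is a routine unpacking of the definition of properness; no further results from the paper are needed.
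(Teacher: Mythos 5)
Your proof is correct and follows essentially the same route as the paper's: a closed ball around $y$ inside $U$ for the forward direction, and normalization $z_n = \seq_n t_n$ with two subsequence extractions (unit sphere, then $K$) for the converse, using Heine--Borel in place of the paper's equivalent ``bounded subsequence'' phrasing. You even fill in the small step the paper leaves implicit, namely that $y \in \overline{U}$ so that $y \notin U$ indeed gives $y \in \bd U$.
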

\begin{proof}
Suppose $F$ is proper and $F(\seq_n t_n) \to y$, but $y \in U$.
Take a closed ball $K \subseteq U$ around $y$. % $K = K_{\eps}[y] \subseteq U$.
Then $F^{-1}(K)$ contains the unbounded sequence $(\seq_n t_n)_{n \ge N}$ for some positive $N$ 
%so $F$ cannot be proper.
and hence is not compact, a contradiction.

Conversely,
let $K$ be a compact subset of $U$. We need to show that every sequence $\Seq_n$ in $F^{-1}(K)$ % (i.e., $F(x_n) \in K$)
has an accumulation point. Since $F^{-1}(K)$ is closed, we only need to show that $\Seq_n$ has a bounded subsequence.
Suppose not, then $\abs{\Seq_n} \to \infty$. Since $F(\Seq_n) \in K$, there is a subsequence (call it $\Seq_n$ again) such that $F(\Seq_n) \to y \in K$.
Now there is a subsubsequence (call it $\Seq_n$ again) such that $\seq_n = \Seq_n / \abs{\Seq_n} \to \seq$,
that is, the sequence $\seq_n$ on the unit sphere converges.
With $t_n = \abs{\Seq_n}$, we have $F(\seq_n t_n) \to y \in K \subset U$, a contradiction.
\end{proof}

% If $F$ is proper, then either $\abs{F(\seq_n t_n)} \to \infty$ as $n \to \infty$ or $F(\seq_n t_n) \to y \notin U$.
In particular, if $F$ is proper, then, for all nonzero $x \in \R^d$,
$F(xt) \to y$ as $t \to \infty$ implies $y \in \bd U$.
That is, if the function values converge along a ray,
then the limit lies on the boundary of the range. % (and hence on its boundary).

By Lemma~\ref{seq} below, the map $\Fc$ under consideration is proper, if it is `proper along rays'.
Before we prove this result, we discuss the behaviour of $\Fc$ along a ray.
For $x \in \R^d$ and $\la \in \R$, we introduce
\[
\ixl{x}{\la} = \{ i \mid \tilde w^i \mal x = \la \}
\]
and write
\[
\Fc(xt) = \sum_\la \sum_{i \in \ixl{x}{\la}} c_i \e^{\la t} w^i ,
\]
where a sum over the empty set is defined as zero.
For $x \in \R^d$ and $c>0$,
let $\la_{\max}$ be the largest $\la$ such that $\sum_{i \in \ixl{x}{\la}} c_i w^i \neq 0$.
If $\la_{\max}>0$, then 
\[
\Fc(xt) \e^{-\la_{\max} t} \to \sum_{i \in \ixl{x}{\la_{\max}}} c_i w^i \neq 0
\]
as $t \to \infty$
and hence $\abs{\Fc(xt)} \to \infty$.
If $\la_{\max}\le0$, then
\[
\Fc(xt) \to \sum_{i \in \ixl{x}{0}} c_i w^i \in C 
\]
as $t \to \infty$.
In this case, 
any vector $w^i$ with $i \in \ixl{\seq}{\la}$ and $\la>0$ (and hence $\sum_{i \in \ixl{x}{\la}} c_i w^i = 0$) lies in the lineality space of~$C$, see~Appendix~\ref{sv}.
If $\la_{\max}<0$, %(and hence $\ixl{x}{0} = \emptyset$), 
then $\Fc(xt) \to 0$.
As a result, we have the following fact:
\[
\text{For every } x \in \R^d \text{, either } \abs{\Fc(xt)} \to \infty \text{ as } t \to \infty \text{ or }\Fc(xt) \to y \in C .
\]

\begin{lem} \label{seq}
The map $\Fc$ is proper, if
\begin{equation} \label{rc} \tag{\textasteriskcentered}
\Fc(xt) \to y \quad \text{as} \quad t \to \infty \quad \text{implies} \quad y \in \bd C
\end{equation}
for all nonzero $x \in \R^d$. % then $\Fc$ is proper.
\end{lem}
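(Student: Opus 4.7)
The plan is to verify the sufficient condition for properness given by Lemma~\ref{had_lem}: given $\seq_n$ on the unit sphere with $\seq_n \to \seq$ and $t_n \to \infty$ such that $\Fc(\seq_n t_n) \to y$, show $y \in \bd C$. The opening move will be an extensive compactness reduction. I would pass to a subsequence so that, for each index $i$, the extended real $\mu_i := \lim_n \tilde w^i \mal \seq_n t_n \in [-\infty, +\infty]$ exists, and moreover so that every pairwise ratio $\e^{(\tilde w^i - \tilde w^j) \mal \seq_n t_n}$ converges in $[0, +\infty]$. Partitioning $[n]$ into $I_- = \{\mu_i = -\infty\}$, $I_0 = \{\mu_i \in \R\}$, and $I_+ = \{\mu_i = +\infty\}$, I would decompose $\Fc(\seq_n t_n) = \Sigma_-^n + \Sigma_0^n + \Sigma_+^n$ accordingly, where $\Sigma_\ast^n$ is the partial sum over $I_\ast$.

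The easy observations are $\Sigma_-^n \to 0$ and $\Sigma_0^n \to y_0 := \sum_{i \in I_0} c_i \e^{\mu_i} w^i \in C$, so $\Sigma_+^n \to y - y_0$. The structural claim I would aim for is that $y - y_0$ lies in the lineality space $L$ of $C$. To prove it, group the indices of $I_+$ into equal-growth-rate classes, and, starting from a maximal-rate class $I_+^{(1)}$, divide $\Sigma_+^n$ by $\e^{M_n}$ with $M_n := \max_{i \in I_+} \tilde w^i \mal \seq_n t_n$. Because $\Sigma_+^n$ is bounded and $\e^{M_n} \to \infty$, this normalized limit vanishes; at the same time it is a strict positive combination of the $w^i$ with $i \in I_+^{(1)}$. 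Since a strict positive combination of elements of a convex cone that equals zero forces each summand into $L$, we get $w^i \in L$ for $i \in I_+^{(1)}$. Iterating on the remaining classes of $I_+$ gives $w^i \in L$ for all $i \in I_+$, whence $\Sigma_+^n \in L$ for each $n$ and $y - y_0 \in L$.

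Using $C^\circ + L = C^\circ$ (and hence $\bd C + L = \bd C$), the conclusion $y \in \bd C$ reduces to $y_0 \in \bd C$. For this I would invoke hypothesis~\eqref{rc} by realizing $y_0$, modulo $L$, as $\lim_{t \to \infty} \Fc(\bar x t + V)$ along a trajectory asymptotic to a genuine ray in direction $\bar x$. Since $(\mu_i)_{i \in I_0}$ is the limit of $(\tilde w^i \mal \seq_n t_n)_{i \in I_0}$, it lies in the closed image of the linear map $V \mapsto (\tilde w^i \mal V)_{i \in I_0}$, so one can fix $V \in \R^d$ with $\tilde w^i \mal V = \mu_i$ for every $i \in I_0$. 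A theorem-of-the-alternative argument then produces $\bar x \neq 0$ with $\tilde w^i \mal \bar x = 0$ for $i \in I_0$ and $\tilde w^i \mal \bar x \le 0$ for $i \notin I_0$, strict wherever possible. The factorization $\Fc(\bar x t + V) = \sum_i c_i \e^{\tilde w^i \mal V} \e^{(\tilde w^i \mal \bar x)\, t} w^i$ then tends to $y_0$ plus an element of $L$, and rewriting $\bar x t + V = t(\bar x + V/t)$ shows the trajectory is asymptotic to the ray $\bar x t$, on which \eqref{rc} applies.

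The hard part, I expect, will be this last step: constructing $\bar x$ with enough strict inequalities on $I_+ \cup I_-$ to force the right limit, and cleanly transferring the convergence from the translated ray $\bar x t + V$ back to a ray through the origin. Both ingredients hinge on a delicate theorem-of-the-alternative argument tied to the partition $(I_-, I_0, I_+)$ and to the geometry of $L$ and $C$. A separate treatment is needed for the degenerate case $C = \R^d$ (when $L = \R^d$ and $\bd C = \emptyset$): there \eqref{rc} forces $\abs{\Fc(xt)} \to \infty$ for every $x \neq 0$, which, combined with the growth-rate analysis above, directly contradicts a finite accumulation $\Fc(\seq_n t_n) \to y$.
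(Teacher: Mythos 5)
Your opening reduction via Lemma~\ref{had_lem}, the partition of $[n]$ into $I_-,I_0,I_+$ along a subsequence, and the claim that the $I_+$ contribution forces $w^i\in L$ (so that $y-y_0\in L$ and the problem reduces to $y_0\in\bd C$) are sound, and this part runs roughly parallel to the paper, which also extracts the lineality-space membership and then uses $\bd C+L=\bd C$. (Minor point: in the iteration over rate classes of $I_+$ you must work modulo $L$, i.e.\ use that $L$ is a face, since after subtracting the top class the remainder of $\Sigma_+^n$ need not be bounded.) The genuine gap is in the last step, where you try to realize $y_0$ as a limit along $\bar x t+V$ and invoke~\eqref{rc}. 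First, \eqref{rc} is a hypothesis about $\Fc(\bar x t)$ for the \emph{given} $c$ and rays through the origin; since $\Fc(\bar x t+V)=F_{c'}(\bar x t)$ with $c'=c\circ\e^{\tilde W^\trans V}\neq c$, the hypothesis says nothing about this trajectory. ``Asymptotic to the ray $\bar x t$'' does not transfer the conclusion --- the original sequence $\seq_n t_n$ is itself asymptotic to the ray $\seq t$, and bridging exactly that gap is the content of the lemma; your argument is circular at this point. Second, the $\bar x$ you need (with $\tilde w^i\mal\bar x=0$ on $I_0$ and $\le 0$ off $I_0$) need not exist: for the limit direction $\seq$ one has $\tilde w^i\mal\seq\ge 0$ on $I_+$, and e.g.\ $\tilde w^1=(1,0)$, $\tilde w^2=(0,1)$, $\tilde w^3=(-1,-1)$ with $I_+=\{1\}$, $I_0=\{2\}$, $I_-=\{3\}$ forces $\bar x=0$. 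Knowing $w^i\in L$ for $i\in I_+$ does not make the positive-rate sums $\sum_{i\in\ixl{\bar x}{\la}}c_i\e^{\tilde w^i\mal V}w^i$ vanish, so allowing positive rates on $I_+$ does not rescue convergence either. Third, in the case $C=\R^d$ your sketch offers no actual contradiction: there $L=\R^d$, so the growth-rate analysis of the sequence yields nothing, and you again lack a bridge from ``every ray blows up'' to ``every approximating sequence blows up.''

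The paper closes precisely this gap with a quantitative transfer statement that your proposal is missing: if $\abs{\Fc(\seq t)}\to\infty$ along the ray, then for $\seq'$ near $\seq$ the dominant index set $\ixl{\seq}{\la}$ splits into classes $\ixl{\seq'}{\mu_j}$ with $\mu_j>\la/2$, at least one of which has a nonvanishing sum, yielding a uniform bound $\abs{\Fc(\seq' t)}>\ga\,\e^{\la t/2}$ and hence $\abs{\Fc(\seq_n t_n)}\to\infty$. Its contrapositive shows that convergence of $\Fc(\seq_n t_n)$ forces convergence of $\Fc(\seq t)$ along the ray $\seq=\lim\seq_n$ itself, so that \eqref{rc} can be applied with the original $c$ and a ray through the origin; the face and lineality-space structure of that ray limit is then transferred back to the sequence limit. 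You would need to prove this transfer estimate (or an equivalent) to complete your argument; without it, the final step does not go through.
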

\begin{proof}
We assume that the ray condition~\eqref{rc} holds for all nonzero $x \in \R^d$.

Let $\seq \in \R^d$ with $\abs{\seq}=1$. %and let $c>0$. 
In order to apply Lemma~\ref{had_lem},
we consider sequences $\seq_n$ in $\R^d$ with $\abs{\seq_n} = 1$ and $\seq_n \to \seq$ and $t_n$ in $\R_{>0}$ with $t_n \to \infty$.
%and assume $\Fc(\seq_n t_n) \to y'$.
%Ultimately, we have to show $y' \in \bd C$. 

To begin with, we show that $\abs{\Fc(\seq t)} \to \infty$ as $t \to \infty$ implies $\abs{\Fc(\seq_n t_n)} \to \infty$ as $n \to \infty$.
Suppose $\abs{\Fc(\seq t)} \to \infty$,
that is, there is $\la>0$ such that $\Fc(\seq t) \e^{-\la t} \to \sum_{i \in \ixl{\seq}{\la}} c_i w^i \neq 0$ as $t \to \infty$.
For $\seq'$ close to $\seq$, we have the partition \[ \ixl{\seq}{\la} = \ixl{\seq'}{\mu_1} \cup \cdots \cup \ixl{\seq'}{\mu_p} \]
with $\mu_j$ close to $\la$ and hence $\mu_j>\frac{\la}{2}$.
Hence, there exists a largest $\mu_j$ such that $\sum_{i \in \ixl{\seq'}{\mu_j}} c_i w^i \neq 0$. 
Otherwise, \[ \sum_{i \in \ixl{\seq}{\la}} c_i w^i = \sum_{i \in \ixl{\seq'}{\mu_1}} c_i w^i + \ldots + \sum_{i \in \ixl{\seq'}{\mu_p}} c_i w^i = 0 . \]
Additionally, there may exist an even larger $\mu$ with $\sum_{i \in \ixl{\seq'}{\mu}} c_i w^i \neq 0$.
In any case,
there is $\la' > \frac{\la}{2}$ such that
\[
\Fc(\seq' t) \e^{-\la' t} \to \sum_{i \in \ixl{\seq'}{\la'}} c_i w^i \neq 0
\]
as $t \to \infty$
and hence $\abs{\Fc(\seq' t)} \e^{-\frac{\la}{2} t} > \ga$ with $\ga>0$ independent of $\seq'$;
that is,
$\abs{\Fc(\seq' t)} > \ga \e^{\frac{\la}2 t}$ as $t \to \infty$.
Hence 
$\abs{\Fc(\seq_n t_n)} >  \ga \e^{\frac{\la}2 t_n}$ as $n \to \infty$; that is, $\abs{\Fc(\seq_n t_n)} \to \infty$,
as claimed.
% contradicting $\Fc(\seq_n t_n) \to y'$. Hence, $\Fc(\seq t) \to y$ as $t \to \infty$.

In case $C=\R^d$ ($\bd C = \emptyset$), the ray condition~\eqref{rc} implies $\abs{\Fc(\seq t)} \to \infty$ as $t \to \infty$ and hence $\abs{\Fc(\seq_n t_n)} \to \infty$ as $n \to \infty$.
By Lemma~\ref{had_lem}, $\Fc$ is proper.

In case $C \neq \R^d$, assume $\Fc(\seq_n t_n) \to y'$ as $n \to \infty$. 
Then, $\Fc(\seq t) \to y$ as $t \to \infty$, by the first argument in the proof and the fact before the lemma.
In particular, $\sum_{i \in \ixl{\seq}{\la}} c_i w^i = 0$ for $\la > 0$
and $y = \sum_{i \in \ixl{\seq}{0}} c_i w^i$. % with $y \in \bd C$.
Hence, vectors $w^i$ with $i \in \ixl{\seq}{\la}$ and $\la>0$ lie in the lineality space of~$C$.
% and \[ \cone(w^i \mid i \in \ixl{\seq}{\la} \text{ with } \la>0) \subseteq \bd C . \]
By the ray condition~\eqref{rc}, $y \in \bd C$, and hence 
\[ 
\cone(w^i \mid i \in \ixl{\seq}{0}) \subseteq \bd C . 
\]
% Note that, if $y = 0$, then the vectors $w^i$ with $i \in \ixl{\seq}{0}$ lie in the lineality space of~$C$ and hence in $\bd C$.
Finally, we write
\[
\Fc(\seq_n t_n) = \sum_{i=1}^n c_i \e^{\tilde w^i \mal \seq_n \, t_n} w^i 
= \sum_{\la} \sum_{i \in \ixl{\seq}{\la}} c_i \e^{\tilde w^i \mal \seq_n \, t_n} w^i .
\]
%where $\sum_\la$ denotes a finite sum over all $\la \in \R$ with $\ixl{x}{\la}\neq\emptyset$.
For $\seq_n$ close to $\seq$, we have $\tilde w^i \mal \seq_n$ close to $\la$ for $i \in \ixl{x}{\la}$,
in particular, $\sum_{i \in \ixl{\seq}{\la}} c_i \e^{\tilde w^i \mal \seq_n \, t_n} w^i \to 0$ for $\la<0$.
The limit $\Fc(\seq_n t_n) \to y'$ as $n \to \infty$ implies
\[
\sum_{i \in \ixl{\seq}{0}} c_i \e^{\tilde w^i \mal \seq_n \, t_n} w^i
+ \sum_{\la > 0} \sum_{i \in \ixl{\seq}{\la}} c_i \e^{\tilde w^i \mal \seq_n \, t_n} w^i 
\to y' ,
\]
and  $y' \in \bd C$ 
since the sum of a vector in $\bd C$ and a vector in the lineality space of $C$ lies in $\bd C$.
By Lemma~\ref{had_lem}, $\Fc$ is proper.
\end{proof}

Let $\Fc(xt)\to y$ as $t\to\infty$ along the ray given by $x$
and $\Fc(x_nt_n)\to y'$ as $n\to\infty$ for a sequence $x_nt_n$ (with $x_n\to x$ and $t_n\to\infty$), approaching the ray.
In the proof of Lemma~\ref{seq}, we have shown that, 
if $y=0$, then $y'\in L$, where $L$ is the lineality space of $C$.
In general, if $y \in C_x = \cone(w^i \mid i \in \ixl{\seq}{0})$, then $y' \in C_x+L$.
Note that there are only finitely many index sets $\ixl{x}{0}$ and hence finitely many limit points $y=\sum_{i \in \ixl{\seq}{0}} c_i w^i$ (for fixed $c>0$),
whereas every $y' \in \bd C$ arises as a limit point (if $\Fc$ is surjective).

Using Theorem~\ref{had} (Hadamard's global inversion theorem) together with Theorems~\ref{crac} or \ref{inj} and Lemma~\ref{seq}, we summarize our findings.
\begin{cor} \label{cor:before}
The map $\Fc$ is bijective for all $c>0$
if and only if
$\Fc$ is injective for all $c>0$ and the ray condition~\eqref{rc} in Lemma~\ref{seq} holds for all nonzero $x \in \R^d$ and all $c>0$.
\end{cor}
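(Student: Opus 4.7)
The plan is to assemble the three pieces already established in the excerpt: the injectivity characterisation (Theorems~\ref{crac}/\ref{inj}), the ray-condition sufficient criterion for properness (Lemma~\ref{seq}), and Hadamard's global inversion theorem (Theorem~\ref{had}). The target $U = C^\circ$ is open and convex, being the interior of a convex cone with nonempty interior (since $W$ has full rank), so Theorem~\ref{had} is directly applicable with $F = \Fc$.

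For the ``only if'' direction, I would assume $\Fc$ is bijective for every $c > 0$. Injectivity is immediate. For the ray condition, I would invoke the proposition stated just before Theorem~\ref{had}: bijectivity is equivalent to $\Fc$ being a diffeomorphism, so $\Fc^{-1}$ is continuous, and hence $\Fc$ is proper. Lemma~\ref{had_lem} then guarantees that, for any sequence $\seq_n \to \seq$ on the unit sphere and any $t_n \to \infty$ with $t_n > 0$, the convergence $\Fc(\seq_n t_n) \to y$ forces $y \in \bd C$. Specialising to the constant sequence $\seq_n \equiv x/\abs{x}$ and $t_n = t\abs{x}$ with $t \to \infty$ recovers the ray condition~\eqref{rc} for an arbitrary nonzero $x \in \R^d$.

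For the ``if'' direction, I would assume injectivity for all $c > 0$ together with the ray condition for every nonzero $x$ and every $c > 0$. Theorem~\ref{inj} (equivalently Theorem~\ref{crac}) converts simultaneous injectivity into nonvanishing of $\det(\DD{\Fc}{x})$ for all $x$ and all $c > 0$, and Lemma~\ref{seq} converts the ray condition into properness of $\Fc \colon \R^d \to C^\circ$. With both hypotheses of Theorem~\ref{had} in place, $\Fc$ is a diffeomorphism onto $C^\circ$, hence bijective, for every $c>0$.

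I do not foresee any genuine obstacle; the corollary is an organisational consequence packaging the machinery just set up. The only point worth explicitly checking is that $C^\circ$ really fits the ``open and convex'' hypothesis required by Theorem~\ref{had}, and that the ray condition~\eqref{rc} is the obvious specialisation of the sequence criterion in Lemma~\ref{had_lem}; both are immediate.
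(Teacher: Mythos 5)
Your proposal is correct and follows exactly the route the paper takes: the paper proves this corollary in one line by combining Theorem~\ref{had} (Hadamard) with Theorems~\ref{crac}/\ref{inj} and Lemma~\ref{seq}, and your ``only if'' direction via properness of a diffeomorphism and the remark after Lemma~\ref{had_lem} is the intended (implicit) argument. You have merely spelled out the details the authors leave to the reader, including the correct check that $C^\circ$ is open and convex.
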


By Theorems~\ref{crac} or~\ref{inj}, 
the simultaneous injectivity of $\Fc$ for all $c>0$ can be characterized in terms of sign vectors of the subspaces $S$ and $\tilde S$.
By Lemma~\ref{ray} below, 
the ray condition~\eqref{rc} (for all nonzero $x \in \R^d$ and all $c>0$) can be characterized in terms of sign vectors of $S$ and $\tilde S$
together with a nondegeneracy condition depending on sign vectors of $S$ and on the subspace $\tilde S$ itself.

\begin{dfn} \label{def}
Let $S,\tilde S$ be subspaces of $\R^n$.
% A vector $z \in \tilde S^\perp$ with a positive component is called {\em \nondeg} if 
The pair $(S,\tilde S)$ is called {\em \nondeg} if, for every $z \in \tilde S^\perp$ with a positive component,
\begin{itemize}
\item
there is $I = \{ i \mid z_i = \la \}$ with $\la>0$,
defining $\pi \in \{0,+\}^n$ with $\pi^+ = I$,
such that $\pi \notin \sign(S)_\zp$
or
\item
for $\tilde \tau = \sign(z) \in \sign(\tilde S^\perp)$, there is a nonzero $\tau \in \sign(S^\perp)_\zp$ such that $\tilde \tau^0 \subseteq \tau^0$.
\end{itemize}
% The pair $(S,\tilde S)$ is called \nondeg if every $z \in \tilde S^\perp$ with a positive component is \nondeg.
\end{dfn}

As our main result,
we obtain a characterization of the simultaneous bijectivity of $\Fc$ for all $c>0$ in terms of the subspaces $S$ and $\tilde S$.

\begin{thm} \label{bij}
The map $\Fc$ is a diffeomorphism for all $c>0$ if and only if
\begin{itemize}
\item[(i)]
$\sign(S) \cap \sign(\tilde S^\perp) = \{0\}$,
\item[(ii)]
for every nonzero $\tilde \tau \in \sign(\tilde S^\perp)_\zp$, 
there is a nonzero $\tau \in \sign(S^\perp)_\zp$ such that $\tau \le \tilde \tau$, 
and
\item[(iii)]
the pair $(S,\tilde S)$ is \nondeg.
\end{itemize}
\end{thm}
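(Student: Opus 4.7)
The plan is to combine Corollary~\ref{cor:before} with Theorem~\ref{inj} to reduce the claim to the following: assuming~(i), the ray condition~\eqref{rc} holds for all nonzero $x \in \R^d$ and all $c>0$ if and only if~(ii) and~(iii) hold. Since $\tilde W$ has full column rank, the map $x \mapsto z = \tilde W^\trans x$ is an isomorphism of $\R^d$ onto $\tilde S^\perp$, so I parametrize rays by nonzero $z \in \tilde S^\perp$ and partition $[n]$ into $P = \{i \mid z_i > 0\}$, $Z = \{i \mid z_i = 0\}$, $N = \{i \mid z_i < 0\}$. By the discussion preceding Lemma~\ref{seq}, $\Fc(xt)$ converges as $t \to \infty$ if and only if $\sum_{i \in \ixl{x}{\la}} c_i w^i = 0$ for every $\la > 0$ occurring as a level of $z$, in which case the limit is $y = \sum_{i \in Z} c_i w^i$.

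The technical core consists of two sign-vector translations. \emph{(A)}~For $I \subseteq [n]$, there exists $c > 0$ with $\sum_{i \in I} c_i w^i = 0$ if and only if the sign vector $\pi \in \{0,+\}^n$ with $\pi^+ = I$ lies in $\sign(S)_\zp$; this is immediate from $S = \ker W$. \emph{(B)}~For $Z \subseteq [n]$, the sub-cone $\cone(w^i \mid i \in Z)$ is contained in $\bd C$ if and only if there exists a nonzero $\tau \in \sign(S^\perp)_\zp$ with $\tau^+ \cap Z = \emptyset$. For the ``only if'', I take a supporting hyperplane of $C$ at a relative interior point of the sub-cone, given by $v \ne 0$ with $W^\trans v \ge 0$ and $w^i \cdot v = 0$ for $i \in Z$, and set $\tau = \sign(W^\trans v)$; note $\tau \ne 0$ because $W$ has full rank. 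The converse produces a witness $v$ for $\tau$ whose supporting hyperplane contains the sub-cone. A direct consequence is that if the inclusion fails, then for \emph{every} positive choice of $c|_Z$ the vector $y = \sum_{i \in Z} c_i w^i$ lies in $C^\circ$, since any supporting $v$ at $y$ would produce a forbidden $\tau$.

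For sufficiency, let $z \in \tilde S^\perp$ be nonzero and $c > 0$ satisfy the antecedent of~\eqref{rc}. If $P = \emptyset$, then $-z \in \tilde S^\perp$ is nonnegative and nonzero, so $\sign(-z) \in \sign(\tilde S^\perp)_\zp$ is nonzero; condition~(ii) yields a nonzero $\tau \in \sign(S^\perp)_\zp$ with $\tau^+ \subseteq N$, hence $\tau^+ \cap Z = \emptyset$, and $y \in \bd C$ by~(B). If $P \ne \emptyset$, then condition~(iii)(a) would produce a positive level with $\pi \notin \sign(S)_\zp$, contradicting~(A) in view of the antecedent; so the relevant subcase is~(iii)(b), which applied to $\tilde \tau = \sign(z)$ provides $\tau \in \sign(S^\perp)_\zp$ with $\tilde \tau^0 \subseteq \tau^0$, i.e.\ $\tau^+ \cap Z = \emptyset$, again placing $y$ in $\bd C$.

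For necessity, I argue contrapositively. If~(ii) fails at some nonzero $\tilde \tau \in \sign(\tilde S^\perp)_\zp$, I pick $z' \in \tilde S^\perp$ with $\sign(z') = \tilde \tau$ and take $z = -z'$ (so $P = \emptyset$ and $Z = \tilde \tau^0$); by~(B) the sub-cone is not in $\bd C$, and the consequence of~(B) provides $c > 0$ with $y \in C^\circ$, breaking~\eqref{rc}. If~(iii) fails at some $z$ with a positive component, I combine the negation of~(a) with~(A) to construct $c > 0$ realizing the antecedent on each positive level, while the negation of~(b) with the consequence of~(B) forces $y \in C^\circ$ for any positive $c|_Z$. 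The main obstacle I anticipate is the clean formulation of~(B) — in particular, the interplay between its ``for some $c$'' antecedent and its ``for all $c$'' consequent, and the extraction of a nontrivial supporting functional via the full-rank hypothesis on $W$ — which has to be arranged so as to mesh cleanly with the sign-vector language of~(ii) and~(iii).
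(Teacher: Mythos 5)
Your proposal is correct and follows essentially the same route as the paper: the paper likewise reduces via Hadamard's theorem (Corollary~\ref{cor:before}) and Theorem~\ref{inj} to analyzing the ray condition, splits rays according to whether $\la_x=\max_i \tilde w^i\mal x$ is positive (your $P\neq\emptyset$) or not, and uses exactly your translations (A) and (B) — positive dependencies versus $\sign(S)_\zp$ and proper faces of $C$ versus nonzero elements of $\sign(S^\perp)_\zp$ — together with the disjointness of the level sets $\ixl{x}{\la}$ to pass between the ``for all $c$'' and ``for some $c$'' quantifiers.
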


Theorem~\ref{bij} immediately implies Theorems~\ref{fulton} and~\ref{bij_suff} (`Birch's Theorem' and its first extension).
\begin{cor} \label{cor:bij}
The map $\Fc$ is a diffeomorphism for all $c>0$ if $\sign(S) = \sign(\tilde S)$.
\end{cor}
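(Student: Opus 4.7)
My plan is to verify conditions (i), (ii), and (iii) of Theorem~\ref{bij} under the hypothesis $\sign(S) = \sign(\tilde S)$ and then invoke that theorem. The single external input I would use is the standard oriented-matroid duality that $\sign(S)$ determines $\sign(S^\perp)$ (the covectors of an oriented matroid are the sign-orthogonals of its vectors, cf.\ Appendix~\eqref{sv}); hence $\sign(S) = \sign(\tilde S)$ also yields $\sign(S^\perp) = \sign(\tilde S^\perp)$, and in particular $\sign(S^\perp)_\zp = \sign(\tilde S^\perp)_\zp$.

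Conditions (i) and (ii) then reduce to short orthogonality arguments. For (i), suppose a nonzero $\tau$ lay in $\sign(S) \cap \sign(\tilde S^\perp) = \sign(\tilde S) \cap \sign(\tilde S^\perp)$: I would pick $v \in \tilde S$ and $w \in \tilde S^\perp$ with $\sign(v) = \sign(w) = \tau$. Every $i \in \supp(\tau)$ contributes $v_i w_i > 0$ to $v \cdot w$, giving $v \cdot w > 0$ in contradiction with $v \perp w$. For (ii), given a nonzero $\tilde \tau \in \sign(\tilde S^\perp)_\zp$, the duality places $\tilde \tau$ in $\sign(S^\perp)_\zp$, so one simply takes $\tau = \tilde \tau$.

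For (iii), I would always establish the first bullet of Definition~\ref{def}. Given $z \in \tilde S^\perp$ with a positive component, set $\lambda = \max_i z_i > 0$, $I = \{i \mid z_i = \lambda\}$, and let $\pi \in \{0,+\}^n$ be defined by $\pi^+ = I$. The claim $\pi \notin \sign(S)_\zp = \sign(\tilde S)_\zp$ is again pure orthogonality: any putative witness $v \in \tilde S$ with $\sign(v) = \pi$ would satisfy $v_i > 0$ on $I$ and $v_i = 0$ elsewhere, forcing $z \cdot v = \lambda \sum_{i \in I} v_i > 0$ and contradicting $z \in \tilde S^\perp$. Combining (i)--(iii) with Theorem~\ref{bij} gives that $\Fc$ is a diffeomorphism for all $c > 0$.

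The only genuinely nontrivial ingredient is the sign-vector duality $\sign(S) = \sign(\tilde S) \Rightarrow \sign(S^\perp) = \sign(\tilde S^\perp)$; once that is quoted from Appendix~\eqref{sv}, the remaining verifications are all one-line orthogonality calculations, and in particular the nondegeneracy condition is handled uniformly (without splitting into cases according to whether $z$ has negative components) by the simple choice $\lambda = \max_i z_i$.
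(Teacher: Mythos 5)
Your proposal is correct and follows essentially the same route as the paper's proof: both invoke Corollary~\ref{orthogonal} ($\sign(S^\perp)=\sign(S)^\perp$) to get conditions (i) and (ii) from $\sign(S)=\sign(\tilde S)$, and both verify nondegeneracy via the first bullet of Definition~\ref{def} by observing that $\pi$ (supported where $z$ is positive) cannot be sign-orthogonal to $\sign(z)$, hence $\pi\notin\sign(\tilde S)_\zp=\sign(S)_\zp$. Your choice $\lambda=\max_i z_i$ and the vector-level orthogonality computations are cosmetic variants of the paper's sign-vector phrasing.
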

\begin{proof}
By Corollary~\ref{orthogonal} in Appendix~\ref{ta}, $\sign(S^\perp)=\sign(S)^\perp$.
Hence, $\sign(S) = \sign(\tilde S)$ implies conditions~(i) and (ii) in Theorem~\ref{bij}.
Now, for $z \in \tilde S^\perp$ with a positive component $z_i = \la > 0$,
consider $\pi \in \{0,+\}^n$ with $\pi^+ = \{ i \mid z_i = \la \}$
and $\tilde \tau = \sign(z) \in \sign(\tilde S^\perp)$.
Obviously, $\pi \cdot \tilde \tau \neq 0$ and hence $\pi \not\in \sign(\tilde S)_\zp = \sign(S)_\zp$.
That is, $(S,\tilde S)$ is \nondeg, as required by condition~(iii).
\end{proof}

We note that condition~(i) in Theorem~\ref{bij} can also be characterized in terms of maximal minors of the matrices $W$ and $\tilde W$, cf.~Corollary~\ref{inj_det}.

Condition~(ii) can be reformulated using faces of the cones $C = \cone W$ and $\tilde C = \cone \tilde W$:
\begin{itemize}
\item[(ii)]
for every proper face $\tilde f$ of $\tilde C$ with $\tilde I = \{ i \mid \tilde w^i \in \tilde f \}$, 
there is a proper face $f$ of $C$ with $I = \{ i \mid w^i \in f \}$ such that $\tilde I \subseteq I$.
\end{itemize}
Indeed, 
a face $f$ of $C$ with $I = \{ i \mid w^i \in f \}$ corresponds to a supporting hyperplane with normal vector $x$
such that $w^i \mal x = 0$ for $i \in I$
and $w^i \mal x > 0$ otherwise (for $w^i$ lying on the positive side of the hyperplane).
Hence $f$ is characterized by the nonnegative sign vector $\tau = \sign(W^\trans x) \in \sign(S^\perp)_\zp$ with $\tau^0 = I$.
Analogously, a face $\tilde f$ of $\tilde C$ with $\tilde I = \{ i \mid \tilde w^i \in \tilde f \}$ is characterized by a nonnegative sign vector $\tilde \tau \in \sign(\tilde S^\perp)_\zp$ with $\tilde \tau^0 = \tilde I$.
Clearly, $\tilde I \subseteq I$ is equivalent to $\tau \le \tilde \tau$.
(For more details on sign vectors and face lattices, see Appendix~\ref{sv}.)

Condition (iii) concerns nondegeneracy. 
The second condition in Definition~\ref{def}, on sign vectors $\tilde \tau = \sign(z) \in \sign(\tilde S^\perp)$, 
corresponds to condition (ii), on nonnegative sign vectors $\tilde \tau \in \sign(\tilde S^\perp)_\zp$. 
The first condition on $z \in \tilde S^\perp$ can also be interpreted geometrically (in terms of the columns of $W,\tilde W$).
Note that $\tilde S^\perp = (\ker \tilde W)^\perp = \im \tilde W^\trans$ and $z = \tilde W^\trans x$ for some $x\in\R^d$.
Hence, the set 
\[
I = \{ i \mid z_i = \la \} = \{ i \mid \tilde w^i \mal x = \la \} = \ixl{x}{\la}
\]
with $\la>0$ indicates equal positive components $z_i$
or, geometrically, equal positive projections of columns $\tilde w^i$ (on $x$).
The corresponding columns $w^i$ must not be positively dependent,
as expressed by the condition $\pi \notin \sign(S)_\zp$ 
for the nonnegative sign vector $\pi \in \{0,+\}^n$ with $\pi^+ = I$.
%The set $I$ defines the nonnegative sign vector $\tau \in \{0,+\}^n$ by $\tau^+ = I$.
%Finally, the condition $\tau \notin \sign(S)_\zp$ requires that the corresponding columns $w^i$ are not positively dependent.

It remains to prove Lemma~\ref{ray}.

\begin{lem} \label{ray}
The ray condition~\eqref{rc} in Lemma~\ref{seq} holds for all nonzero $x \in \R^d$ and for all $c>0$
if and only if
conditions (ii) and (iii) in Theorem~\ref{bij} hold.
\end{lem}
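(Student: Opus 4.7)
The plan is to fix a nonzero direction $x \in \R^d$ and parametrize it via $z = \tilde W^\trans x \in \tilde S^\perp$, using that $\tilde W^\trans$ is a bijection between $\R^d \setminus \{0\}$ and $\tilde S^\perp \setminus \{0\}$. By the discussion preceding Lemma~\ref{seq}, $\Fc(xt)$ converges as $t \to \infty$ iff $\sum_{i \in \ixl{x}{\la}} c_i w^i = 0$ for every $\la > 0$, with limit $y = \sum_{i \in \ixl{x}{0}} c_i w^i$. So the ray condition~\eqref{rc} for $(x,c)$ is exactly the implication from this ``hypothesis'' to the ``conclusion'' $y \in \bd C$.

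I then appeal to two combinatorial dualities (cf.~Appendix~\ref{sv}). For any $I \subseteq [n]$, a strictly positive dependence $\sum_{i \in I} c_i w^i = 0$ with all $c_i>0$ exists iff the sign vector $\pi \in \{0,+\}^n$ with $\pi^+ = I$ lies in $\sign(S)_\zp$. And $\{w^i : i \in J\}$ is contained in a proper face of $C$ iff there is a nonzero $\tau \in \sign(S^\perp)_\zp$ with $J \subseteq \tau^0$; in this case every strictly positive combination of these $w^i$ lies in $\bd C$, while if no such $\tau$ exists, the minimal face of $C$ containing the $w^i$ ($i \in J$) is $C$ itself, so any such positive combination lies in $C^\circ$.

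I split the argument by $\tilde \tau = \sign(z)$. If $\tilde \tau^+ = \emptyset$, the hypothesis is vacuous and the ray condition reduces to $\sum_{i \in \tilde \tau^0} c_i w^i \in \bd C$ for every $c > 0$; by the second duality, this amounts to the existence of a nonzero $\tau \in \sign(S^\perp)_\zp$ with $\tau \le -\tilde \tau$. Since $-\tilde \tau$ ranges over all nonzero elements of $\sign(\tilde S^\perp)_\zp$ as $z \le 0$ varies over $\tilde S^\perp \setminus \{0\}$, this case is equivalent to condition~(ii). If $\tilde \tau^+ \neq \emptyset$, the first bullet of Definition~\ref{def} fails for $z$ precisely when the hypothesis is satisfiable by some $c > 0$ (by disjointness of the $\ixl{x}{\la}$ for distinct $\la$, positive dependencies on each $\ixl{x}{\la}$ with $\la>0$ can be combined with arbitrary positive entries on $\ixl{x}{0}$ and on $\{i : z_i < 0\}$ into a single global $c > 0$), while the second bullet is precisely what forces the conclusion $y \in \bd C$. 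Hence this case is equivalent to condition~(iii).

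Both directions then follow: sufficiency by checking the implication in each case, and necessity by applying the ray condition to $x$ with $\tilde W^\trans x = -z$ (for any nonzero $\tilde \tau \in \sign(\tilde S^\perp)_\zp$ realized by some $z \ge 0$) to extract (ii), and by the explicit construction above to extract (iii). I expect the main obstacle to be Case~2, where one must carefully separate satisfiability of the hypothesis (governed by $\sign(S)_\zp$ via the first bullet) from forcing of the conclusion (governed by $\sign(S^\perp)_\zp$ via the second bullet), and verify that the merger of positive dependencies on the disjoint sets $\ixl{x}{\la}$ really produces a single valid $c > 0$ placing $y$ in $C^\circ$ whenever both bullets fail.
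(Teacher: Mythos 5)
Your proposal is correct and takes essentially the same route as the paper: the paper also splits on whether $\la_x=\max_i \tilde w^i\mal x$ is $\le 0$ or $>0$ (your dichotomy $\tilde\tau^+=\emptyset$ vs.\ $\tilde\tau^+\neq\emptyset$ for $z=\tilde W^\trans x$), handles the first case via the face/sign-vector duality to get condition~(ii), and in the second case performs exactly your quantifier separation — using disjointness of the sets $\ixl{x}{\la}$ to merge positive dependencies into a single $c>0$ and the ``all-or-nothing'' behaviour of strictly positive combinations relative to proper faces — to match the two bullets of Definition~\ref{def}.
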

\begin{proof}
For nonzero $x \in \R^d$, let $\la_x = \max_i \tilde w^i \mal x$.
We show the following two statements.
Condition (ii) is equivalent to: the ray condition~\eqref{rc} holds for all nonzero $x$ with $\la_x \le 0$ and all $c>0$.
Condition (iii) is equivalent to: the ray condition~\eqref{rc} holds for all nonzero $x$ with $\la_x > 0$ and all $c>0$.

(ii): If $\la_x\le 0$, then $\tilde \tau = \sign(-\tilde W^\trans x) \in \sign(\tilde S^\perp)_\zp$ defines a proper face of~$\tilde C$
and $\Fc(xt) \to \sum_{i \in \tilde \tau^0} c_i w^i$ as $t \to \infty$.
The ray condition~\eqref{rc} for all $c>0$ is equivalent to $\sum_{i \in \tilde \tau^0} c_i w^i \in \bd C$ for all $c>0$.
That is, there is a proper face of~$C$ characterized by a nonzero $\tau \in \sign(S^\perp)_\zp$ such that $\tilde \tau^0 \subseteq \tau^0$.
Equivalently, $\tau \le \tilde \tau$, that is, (ii) for $\tilde \tau$.

By varying over all nonzero $x \in \R^d$ with $\la_x\le0$, all nonzero $\tilde \tau \in \sign(\tilde S^\perp)_\zp$ are covered.

(iii): If $\la_x> 0$, then $z = \tilde W^\trans x \in \tilde S^\perp$ has a positive component.
Using the fact before Lemma~\ref{seq},
the ray condition~\eqref{rc} for all $c>0$ is equivalent to

for all $c>0$,
\begin{itemize}
\item[($\al$)]
either there is $\la>0$ such that $\Fc(xt) \e^{-\la t} \to \sum_{i \in \ixl{x}{\la}} c_i w^i \neq 0$ as $t \to \infty$
\item[($\be$)]
or $\Fc(xt) \to \sum_{i \in \ixl{x}{0}} c_i w^i \in \bd C$. 
\end{itemize}
This is further equivalent to
\begin{itemize}
\item[(a)]
there is $\la>0$ such that, for all $c>0$,
$\sum_{i \in \ixl{x}{\la}} c_i w^i \neq 0$
or
\item[(b)]
$\sum_{i \in \ixl{x}{0}} c_i w^i \in \bd C$
for all $c>0$.
\end{itemize}
To see this, 
note that the sets $\ixl{x}{\la}$ are disjoint
and the sums $\sum_{i \in \ixl{x}{\la}} c_i w^i$ involve different coefficients~$c_i$ for different $\la$. \\
($\Rightarrow$): Assume $\neg$(a), that is, for all $\la>0$,
there exists $c>0$ such that $\sum_{i \in \ixl{x}{\la}} c_i w^i = 0$.
Then, $\sum_{i \in \ixl{x}{0}} c_i w^i \in \bd C$ for all $c>0$, that is, (b). \\
($\Leftarrow$): Clearly, (a) implies ($\al$) for all $c>0$.
Finally, assume (b) and let $c>0$.
Then, either ($\al$) or, for all $\la>0$, $\sum_{i \in \ixl{x}{\la}} c_i w^i = 0$. 
In the latter case, $\Fc(xt) \to \sum_{i \in \ixl{x}{0}} c_i w^i $ with $\sum_{i \in \ixl{x}{0}} c_i w^i \in \bd C$,
that is, ($\be$).

Finally, (a) or (b) is equivalent to
\begin{itemize}
\item[$\bullet$]
there is $\ixl{x}{\la} = \{ i \mid z_i = \la \}$ with $\la>0$
such that $c \notin \ker W = S$ for all $c \ge 0$ with $\supp(c) = \ixl{x}{\la}$,
that is, there is $\pi \in \{0,+\}^n$ with $\pi^+ = \ixl{x}{\la}$
such that $\pi \notin \sign(S)_\zp$,
or
\item[$\bullet$]
for $\tilde \tau = \sign(z) \in \sign(\tilde S^\perp)$ and hence $\tilde \tau^0 = \ixl{x}{0}$,
there is a proper face of $C$, characterized by a nonzero $\tau \in \sign(S^\perp)_\zp$, 
such that $\tilde \tau^0 \subseteq \tau^0$,
\end{itemize}
that is, (iii) for $z$.

By varying over all nonzero $x \in \R^d$ with $\la_x>0$, all $z \in \tilde S^\perp$ with a positive component are covered.
\end{proof}

%%%%%%%%% %%%%%%%%% %%%%%%%%% %%%%%%%%% %%%%%%%%%

\subsection{Special cases: $C=\R^d$ or $C$ is pointed} \label{sec:special}

We discuss the conditions for bijectivity in Theorem~\ref{bij} for two extreme cases,
regarding the geometry of the cones $C = \cone W$ and $\tilde C = \cone \tilde W$.

If $C=\R^d$ (that is, $\sign(S^\perp)_\zp = \{0\}$), % that is, $\pp \in \sign(S)$, \\
then condition (ii) is equivalent to % $\sign(\tilde S^\perp)_\zp = \{0\}$, that is, 
$\tilde C=\R^d$.
Hence, if $C=\R^d$ and $\Fc$ is bijective for all $c>0$, then $\tilde C=\R^d$.
However, the converse does not hold.

\begin{exa}
Let $\Fc$ be given by the matrices
\[
\tilde W =
\begin{pmatrix}
1 & 0 &-1 \\
0 & 1 &-1
\end{pmatrix} 
\quad \text{and} \quad
W =
\begin{pmatrix}
1 & 0 &-1 \\
0 & 1 & 0
\end{pmatrix} .
\]
Then $\tilde C = \R^2$ and $\Fc$ is bijective for all $c>0$.
However, $C \neq \R^2$.
\end{exa}

If $\pp \in \sign(S^\perp)$ % that is, $\sign(S)_\zp = \{0\}$, \\
(that is, $C$ is pointed and no column of $W$ is zero),
then condition (iii) holds (since $\sign(S)_\zp = \{0\}$),
and conditions (i) and (ii) imply $\pp \in \sign(\tilde S^\perp)$ (by Proposition~\ref{pro:pointed} below).
Hence, if $\pp \in \sign(S^\perp)$ and $\Fc$ is bijective for all $c>0$, then $\pp \in \sign(\tilde S^\perp)$.
However, the converse does not hold.

\begin{exa}
Let $\Fc$ be given by the matrices
\[
\tilde W =
\begin{pmatrix}
1 & 1 & 0 \\
0 & 1 & 1
\end{pmatrix} 
\quad \text{and} \quad
W =
\begin{pmatrix}
1 & 0 &-1 \\
0 & 1 & 0
\end{pmatrix} .
\]
Then, $\tilde C = \R^2_{\ge0}$, $(+,+,+)^\trans \in \sign(\tilde S^\perp)$,
and $\Fc$ is bijective for all $c>0$.
However, $C = \R \times \R_{\ge0}$ and $(+,+,+)^\trans \not\in \sign(S^\perp)$.
\end{exa}

If $\pp \in \sign(S^\perp)$ (that is, $C$ is pointed and no column of $W$ is zero),
then conditions (i) and (ii) imply the surjectivity of $\Fc$ for all $c>0$ and, 
by the following result, $\pp \in \sign(\tilde S^\perp)$.

\begin{pro} \label{pro:pointed}
Let $\pp \in \sign(S^\perp)$. % (and hence $C$ is pointed),
If $\Fc$ is surjective, % for all $c>0$. 
then $\pp \in \sign(\tilde S^\perp)$. % (and hence $\tilde C$ is pointed).
\end{pro}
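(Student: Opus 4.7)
The plan is to exploit the surjectivity of $\Fc$ by pulling back a sequence $y_k \to 0$ inside $C^\circ$ and extracting a contradiction. Since $\pp \in \sign(S^\perp) = \sign(\im W^\trans)$, I would first pick $x_0 \in \R^d$ with $w^i \cdot x_0 > 0$ for every $i$. In particular $C$ is pointed, so for any fixed $y_0 \in C^\circ$ the sequence $y_k := y_0/k$ stays in $C^\circ$ and tends to zero.

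By surjectivity, there exist $x_k \in \R^d$ with $\Fc(x_k) = y_k$. Taking the scalar product with $x_0$ converts the vector equation into a scalar one:
\[
\sum_{i=1}^n c_i \, \e^{\tilde w^i \cdot x_k} \, (w^i \cdot x_0) = y_k \cdot x_0 \to 0 .
\]
This is a sum of strictly positive terms tending to $0$, so each summand must tend to $0$. As $c_i$ and $w^i \cdot x_0$ are fixed positive constants, this forces $\e^{\tilde w^i \cdot x_k} \to 0$ and hence $\tilde w^i \cdot x_k \to -\infty$ for every $i$.

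I would then argue by contradiction. Suppose $\pp \notin \sign(\tilde S^\perp)$. By a theorem of the alternative (see Appendix~\ref{ta}), there exists a nonzero $v \in \R^n_{\ge 0}$ with $\tilde W v = 0$, i.e., a nonzero nonnegative element of $\tilde S$. From $\sum_i v_i \tilde w^i = 0$ I would obtain $\sum_i v_i (\tilde w^i \cdot x_k) = 0$ for every $k$. But for $k$ large enough each $\tilde w^i \cdot x_k$ is strictly negative, and since $v \ge 0$ is nonzero the sum must be strictly negative, a contradiction.

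The main obstacle I anticipate is spotting the right `probe': it is precisely the pointedness of $C$ (encoded by $\pp \in \sign(S^\perp)$) that lets me shrink $y_k$ to the apex while staying in $C^\circ$, and it is precisely the existence of a uniformly positive test vector $x_0$ on $\{w^1,\dots,w^n\}$ that translates $y_k \to 0$ into $\tilde w^i \cdot x_k \to -\infty$ for \emph{every} index $i$ simultaneously. Once this uniform escape of the preimages is established, the theorem of the alternative closes the argument cleanly.
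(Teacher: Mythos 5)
Your proof is correct and takes essentially the same route as the paper: choose a probe vector $x_0$ with $w^i \cdot x_0 > 0$ for all $i$, pull back a sequence in $C^\circ$ tending to $0$, and conclude from the resulting sum of positive terms that $\tilde w^i \cdot x_k \to -\infty$ for every $i$. The only difference is the final step, where the paper concludes directly that $-\tilde W^\trans x_k$ is a strictly positive element of $\im \tilde W^\trans = \tilde S^\perp$ for large $k$, while you take a correct but unnecessary detour through a theorem of the alternative.
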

\begin{proof}
By surjectivity, the image of $\Fc$ contains points arbitrarily close to zero.
Hence, there is a  sequence $\Seq_k$ in $\R^d$ 
such that $\Fc(\Seq_k ) \to 0$ as $k \to \infty$.
Since $\pp \in \sign(S^\perp)= \sign(\im W^\trans)$, there is $y \in \R^d$ such that $y\mal w^i > 0$ for all $i \in [n]$.
Now,
\[
y\cdot\Fc(\Seq_k) = \sum_{i=1}^n c_i \, (y \cdot w^i) \e^{\tilde w^i \mal \Seq_k} 
\]
is a sum of positive terms converging to zero, and hence each term goes to zero.
This implies $\tilde w^i \mal \Seq_k < 0$ for large $k$, for all $i \in [n]$.
Hence,
\[
\pp = \sign(-\tilde W^\trans \Seq_k) \in \sign(\im \tilde W^\trans)_\zp = \sign(\tilde S^\perp)_\zp .
\]
\end{proof}

%%%%%%%%% %%%%%%%%% %%%%%%%%% %%%%%%%%% %%%%%%%%%

\subsection{Sign-vector conditions} \label{sec:sv}

In general, the simultaneous bijectivity of $\Fc$ for all $c>0$, 
in particular, condition (iii) in Theorem~\ref{bij},
cannot be characterized in terms of sign vectors of $S$ and $\tilde S$.

\begin{exa} \label{exa:sv}
Let $\Fc$ be given by the matrices
\[
\tilde W =
\begin{pmatrix}
1 & 1 & 0 & 0 &-1 & \pa \\
1 &-1 & 0 & 0 & 0 & 0 \\
0 & 0 & 1 &-1 & 0 & 0
\end{pmatrix}
\quad \text{and} \quad
W =
\begin{pmatrix}
0 & 0 & 1 & 1 &-1 & 0 \\
1 &-1 & 0 & 0 & 0 &-1 \\
0 & 0 & 1 &-1 & 0 & 0
\end{pmatrix} ,
\]
involving the parameter $\pa > 0$.
Obviously, $\tilde C = C = \R^3$.
For $\pa=1$ or $\pa\in[2,\infty)$, the map $\Fc$ is injective for all $c>0$, but not bijective,
whereas for $\pa\in(0,1)$ or $\pa\in(1,2)$, the map $\Fc$ is bijective for all $c>0$.
Clearly, the sign vectors $\sign(\tilde S) = \sign(\ker \tilde W)$ do not depend on $\pa$
and hence cannot characterize bijectivity.
\end{exa}

In general, condition (iii) depends on the subspace $\tilde S$ itself.
Still, 
\begin{itemize}
\item 
condition (iii) holds trivially % (since $\sign(S)_\zp = \{0\}$).
if $\pp \in \sign(S^\perp)$, % (and hence $C$ is pointed), 
see Section~\ref{sec:special},
% and bijectivity is characterized by conditions (i) and (ii), that is, in terms of sign vectors (of $S$ and $\tilde S$).
\item
there is a (weakest) condition (\iiis) in terms of sign vectors of $S$ and $\tilde S$ {\em sufficient} for nondegeneracy,
see Proposition~\ref{iii}, and
\item
there is a {\em sufficient} condition for nondegeneracy using faces of the Newton polytope $\tilde P$,
% characterized by sign vectors (however, not of $S$ and $\tilde S$),
see Proposition~\ref{newton}.
(Thereby, faces of $\tilde P$ correspond to nonnegative sign vectors of an affine subspace related to $\tilde S$.)
\end{itemize}

\begin{pro} \label{iii}
Let $S,\tilde S$ be subspaces of $\R^n$.
If
\begin{itemize}
\item[(\iiis)]
for all $\tilde \tau \in \sign(\tilde S^\perp)$ with $\tilde \tau^+ \neq \emptyset$,
\begin{itemize}
\item[$\bullet$]
there is no $\pi \in \sign(S)_\zp$ with $\pi^+= \tilde \tau^+$
\item[$\bullet$]
or there is no $\rho \in \sign(S)$ with $\tilde \tau^+ \cup \tilde \tau^- \subseteq \rho^+$
% (That is, there is a nonzero $\tau \in \sign(S^\perp)_\zp$ with $\tilde \tau^0 \subseteq \tau^0$.)
\end{itemize}
\end{itemize}
then the pair $(S,\tilde S)$ is \nondeg. That is, (\iiis) $\Rightarrow$ (iii).
\end{pro}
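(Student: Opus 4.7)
My plan is to fix an arbitrary $z \in \tilde S^\perp$ with a positive component, set $\tilde\tau = \sign(z) \in \sign(\tilde S^\perp)$ (so $\tilde\tau^+ \neq \emptyset$), and show that hypothesis~(\iiis) forces at least one of the two bullets of Definition~\ref{def} to hold for $z$. Since (\iiis) offers two alternatives on $\tilde\tau$, I handle them in turn; each case delivers one of the two bullets of \nondeg.

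\textbf{First case.} Suppose there is no $\pi \in \sign(S)_\zp$ with $\pi^+ = \tilde\tau^+$. I would argue the first bullet by contradiction. Enumerate the distinct positive values $\lambda > 0$ attained by the entries of $z$, and let $I_\lambda = \{i \mid z_i = \lambda\}$; the sets $I_\lambda$ are pairwise disjoint with $\bigcup_\lambda I_\lambda = \tilde\tau^+$. If the first bullet failed for $z$, then for every such $\lambda$ the indicator sign vector $\pi^{(\lambda)} \in \{0,+\}^n$ with $(\pi^{(\lambda)})^+ = I_\lambda$ would lie in $\sign(S)_\zp$, witnessed by some nonnegative $s^{(\lambda)} \in S$ with $\sign(s^{(\lambda)}) = \pi^{(\lambda)}$. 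By disjointness, $s := \sum_\lambda s^{(\lambda)} \in S$ is nonnegative and satisfies $\sign(s)^+ = \tilde\tau^+$, so $\sign(s)$ contradicts the case assumption.

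\textbf{Second case.} Suppose there is no $\rho \in \sign(S)$ with $\tilde\tau^+ \cup \tilde\tau^- \subseteq \rho^+$. Writing $A := \supp(\tilde\tau)$, this says precisely that no $s \in S$ has $s_i > 0$ for every $i \in A$. Here I would invoke a Gordan-style theorem of the alternative (from Appendix~\ref{ta}), applied to the linear map $S \to \R^A$, $s \mapsto (s_i)_{i \in A}$: absence of a vector in $S$ that is strictly positive on all of $A$ is equivalent to the existence of a nonzero $v \in S^\perp$ with $v \ge 0$ and $\supp(v) \subseteq A$. Setting $\tau = \sign(v) \in \sign(S^\perp)_\zp$ yields a nonzero nonnegative sign vector with $\tau^+ \subseteq A = \supp(\tilde\tau)$, equivalently $\tilde\tau^0 \subseteq \tau^0$, which is exactly the second bullet of Definition~\ref{def}.

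The plan is short and I do not anticipate a genuine obstacle. The one point that requires care is the first case: one must verify that summing the witnesses $s^{(\lambda)}$ over \emph{all} distinct positive values $\lambda$ recovers the entire support $\tilde\tau^+$, which relies on disjointness of the level sets $I_\lambda$ together with nonnegativity of each summand. The second case reduces cleanly to the theorem of the alternative, provided the variant in Appendix~\ref{ta} is applied with the correct subspace and coordinate subset.
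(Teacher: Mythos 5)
Your proposal is correct and follows essentially the same route as the paper's proof, which argues the contrapositive: the paper combines the level-set sign vectors via composition (where you sum explicit witness vectors) and invokes Corollary~\ref{minty1} in the reverse direction to pass between the nonexistence of a strictly positive vector of $S$ on $\supp(\tilde\tau)$ and the existence of a nonzero nonnegative covector supported in $\supp(\tilde\tau)$. The two arguments are logically equivalent and use the same two ingredients.
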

\begin{proof}
Assume that $(S,\tilde S)$ is \degdeg,
in particular,
that $z \in \tilde S^\perp$ with a positive component violates nondegeneracy, 
and let $\tilde \tau = \sign(z) \in \sign(\tilde S^\perp)$, where $\tilde \tau^+ \neq \emptyset$.
\par
For every index set $I = \{ i \mid z_i = \la \}$ with $\la>0$,
the sign vector $\pi \in \{0,+\}^n$ with $\pi^+ = I$ satisfies $\pi \in \sign(S)_\zp$.
Clearly, the index sets $I$ cover $\tilde \tau^+ = \{i \mid z_i > 0\}$
and, by composition, there is $\pi \in \sign(S)_\zp$ with $\pi^+ = \tilde \tau^+$.
\par
Further, there is no nonzero $\tau \in \sign(S^\perp)_\zp$ such that $\tilde \tau^0 \subseteq \tau^0$,
that is, $\tau \le \abs{\tilde \tau}$.
Thereby, $\abs{\tilde \tau} \in \{0,+\}^n$ 
with $\abs{\tilde \tau}^0 = \tilde \tau^0$ and $\abs{\tilde \tau}^+ = \tilde \tau^+ \cup \tilde \tau^-$.
By Corollary~\ref{minty1} in Appendix~\ref{ta},
there is $\rho \in \sign(S)$ such that $\rho \ge \abs{\tilde \tau}$,
that is, $\abs{\tilde \tau}^+ \subseteq \rho^+$.
\end{proof}

Finally, we formulate a sufficient condition for nondegeneracy using faces of the Newton polytope $\tilde P = \conv \tilde W$, the convex hull of the columns of $\tilde W$.
A face $\tilde f$ of $\tilde P$ with $\tilde I = \{ i \mid \tilde w^i \in \tilde f \}$ 
corresponds to a supporting affine hyperplane with normal vector $x \in \R^d$ and $\la \in \R$
such that $\tilde w^i \mal x = \la$ for $i \in \tilde I$ and $\tilde w^i \mal x < \la$ otherwise;
that is, $\tilde I = \ixl{x}{\la}$.
It further corresponds to $z = \tilde W^\trans x \in \tilde S^\perp$, where $\tilde I = \{ i \mid z_i = \la \}$.
If $\la>0$, we call the face $\tilde f$ of $\tilde P$ {\em positive}, and $z \in \tilde S^\perp$ has a positive component.

\begin{pro} \label{newton}
Let $S,\tilde S$ be subspaces of $\R^n$,
$\tilde W \in \R^{d \times n}$ be a matrix with full rank such that $\ker \tilde W = \tilde S$,
and $\tilde P = \conv \tilde W$ be the Newton polytope.
The pair $(S,\tilde S)$ is \nondeg, if,
for every positive face $\tilde f$ of $\tilde P$ with $\tilde I = \{ i \mid \tilde w^i \in \tilde f \}$,
the sign vector $\pi \in \{ 0,+ \}^n$ with $\pi^+ = \tilde I$
satisfies $\pi \not\in \sign(S)_\zp$.
\end{pro}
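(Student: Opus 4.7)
The plan is to verify the first bullet of Definition~\ref{def} directly for every relevant $z$, by showing that the index set at the maximum value of $z$ defines a positive face of $\tilde P$, so the hypothesis applies.

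First I would take an arbitrary $z \in \tilde S^\perp$ with at least one positive component. Since $\tilde S^\perp = (\ker \tilde W)^\perp = \im \tilde W^\trans$, I can write $z = \tilde W^\trans x$ for some $x \in \R^d$, so $z_i = \tilde w^i \cdot x$ for each $i$. I would then set $\la^* = \max_i z_i$ and $I^* = \{ i \mid z_i = \la^*\} = \ixl{x}{\la^*}$; the hypothesis that $z$ has a positive component immediately gives $\la^* > 0$.

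Next I would translate $I^*$ into a face of the Newton polytope. The affine hyperplane $H = \{v \in \R^d : v \cdot x = \la^*\}$ satisfies $\tilde w^i \in H$ exactly for $i \in I^*$ and $\tilde w^i \cdot x < \la^*$ for $i \notin I^*$, so $H$ is a supporting hyperplane of $\tilde P = \conv \tilde W$ with contact set $\{\tilde w^i : i \in I^*\}$. By the geometric description immediately preceding the proposition, this is a face $\tilde f$ of $\tilde P$ with $\tilde I = I^*$, and since $\la^* > 0$ it is positive. The hypothesis of the proposition therefore gives that the sign vector $\pi \in \{0,+\}^n$ with $\pi^+ = I^*$ does not lie in $\sign(S)_\zp$. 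This is precisely the first bullet in Definition~\ref{def} for the vector $z$, so the pair $(S,\tilde S)$ is nondegenerate.

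The only subtle point I expect is the degenerate case $I^* = [n]$, where the linear functional $\tilde w \mapsto \tilde w \cdot x$ is constant on all columns; here the ``face'' is $\tilde P$ itself, sitting on an affine hyperplane not through the origin. I would handle this by reading the paragraph before the proposition as including such (improper) supporting-hyperplane faces, so the hypothesis still applies with $\tilde I = [n]$ and $\pi = \pp$, and the argument goes through uniformly. No real computation is needed; the content is purely the identification of level sets of $z = \tilde W^\trans x$ with the contact sets of supporting hyperplanes of $\tilde P$.
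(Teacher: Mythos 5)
Your proof is correct and follows essentially the same route as the paper: both pick $\la = \max_i z_i > 0$, identify the level set $\{i \mid z_i = \la\}$ with the contact set of a supporting hyperplane of $\tilde P$ (hence a positive face), and invoke the hypothesis to obtain the first bullet of Definition~\ref{def}. Your extra remark about the improper face $\tilde I = [n]$ is a reasonable reading of the paper's convention and does not change the argument.
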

\begin{proof}
Let $z \in \tilde S^\perp$ have a positive component,
$\la = \max_i z_i > 0$, and $\tilde I = \{ i \mid z_i = \la \}$.
Then $z$ corresponds to a positive face $\tilde f$ of $\tilde P$ with $\tilde I = \{ i \mid \tilde w^i \in \tilde f \}$.
If the sign vector $\pi \in \{ 0,+ \}^n$ with $\pi^+ = \tilde I$
satisfies $\pi \not\in \sign(S)_\zp$,
then $z$ is \nondeg, by definition.
\end{proof}

%%%%%%%%% %%%%%%%%% %%%%%%%%% %%%%%%%%% %%%%%%%%%
%%%%%%%%% %%%%%%%%% %%%%%%%%% %%%%%%%%% %%%%%%%%%
%%%%%%%%% %%%%%%%%% %%%%%%%%% %%%%%%%%% %%%%%%%%%

\section{Robustness of bijectivity} \label{sec:robust}

We study the robustness of the simultaneous bijectivity of $\Fc$ for all $c>0$
with respect to small perturbations of the exponents $\tilde W$ or/and the coefficients~$W$,
corresponding to small perturbations of the subspaces $\tilde S$ and $S$ (in the Grassmannian).

The set of all $n-d$ dimensional subspaces $S$ of $\R^n$ is the Grassmann manifold of rank $n-d$.
It is a  compact, connected smooth manifold of dimension $d(n-d)$, 
see e.g. \cite[Chapter IV.7]{GlazmanLjubic1974}.
There are many metrics on the Grassmannian that generate the same topology, 
for example, two subspaces $S$ and $\tilde S$ are close 
if and only if,
for all $x \in S$ with $\abs{x}=1$, there exists $\tilde x \in \tilde S$ close to $x$, and the other way round.

\subsection{Perturbations of the exponents} \label{sec:exp}

First, we consider small perturbations of the subspace $\tilde S$, corresponding to the exponents $\tilde W$ in $\Fc$.
As it turns out, the closure of $\sign(\tilde S)$ plays an important role.

\begin{dfn}
Let $T \subseteq \{-,0,+\}^n$. We define its {\em closure}
\[
\overline{T} = \{ \tau \in \{-,0,+\}^n \mid \tau \le \rho \text{ for some } \rho \in T \} .
\]
\end{dfn}
Clearly,  
$T_1 \subseteq \overline{T_2}$ implies $\overline{T_1} \subseteq \overline{T_2}$.

\begin{lem} \label{pert}
Let $S$ be a subspace of $\R^n$ and $S_\eps$ be a small perturbation. % (in the Grassmannian).
Then $\sign(S) \subseteq \overline{\sign(S_\eps)}$.
\end{lem}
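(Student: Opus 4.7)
The plan is to unpack the definition of closeness on the Grassmannian and use continuity of the sign function on the nonzero reals.

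First I would take an arbitrary $\sigma \in \sign(S)$ and pick $x \in S$ with $\sign(x) = \sigma$. Since scaling preserves the sign vector, I normalize to $\abs{x}=1$. Set $\delta = \min \{\abs{x_i} \mid i \in \supp(\sigma)\} > 0$ (if $\sigma = 0$ the conclusion is trivial, since $0 \le \rho$ for any $\rho \in \sign(S_\eps)$).

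Next I would invoke the characterization of the Grassmannian topology recalled just before the lemma: for $S_\eps$ sufficiently close to $S$, every unit vector in $S$ admits an element of $S_\eps$ arbitrarily close to it. In particular, there is $x_\eps \in S_\eps$ with $\abs{x_\eps - x} < \delta$. The componentwise inequality $\abs{(x_\eps)_i - x_i} < \delta$ then forces $\sign\bigl((x_\eps)_i\bigr) = \sign(x_i)$ for every $i \in \supp(\sigma)$, since $\abs{x_i} \ge \delta$ and the sign is locally constant there.

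For $i \notin \supp(\sigma)$, i.e.\ $\sigma_i = 0$, the value $(x_\eps)_i$ can be of either sign or zero, but in all cases $0 \le \sign\bigl((x_\eps)_i\bigr)$ in the partial order on $\{-,0,+\}$. Combining the two cases, $\sigma \le \sign(x_\eps)$ componentwise, so $\sign(x_\eps) \in \sign(S_\eps)$ witnesses $\sigma \in \overline{\sign(S_\eps)}$. Varying $\sigma$ over $\sign(S)$ yields the desired inclusion.

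There is no real obstacle: the only subtle point is making sure the Grassmannian closeness statement (phrased for unit vectors) gives absolute closeness that beats the fixed threshold $\delta$; this is handled by the initial normalization of $x$. The rest is the elementary fact that strict sign agreement persists under sufficiently small perturbations while zero components may only acquire a sign (which is compatible with $\le$).
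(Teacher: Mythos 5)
Your proof is correct and follows essentially the same route as the paper's: pick $x\in S$ representing the sign vector, use the Grassmannian closeness to find $x_\eps\in S_\eps$ near $x$, and observe that nonzero components keep their signs while zero components may only acquire one, giving $\sign(x)\le\sign(x_\eps)$. You merely make explicit the quantitative threshold $\delta$ and the normalization that the paper leaves implicit.
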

\begin{proof}
Let $\pi \in \sign(S)$ and a corresponding $x \in S$ with $\pi = \sign(x)$.
Then there is $x_\eps \in S_\eps$ close to $x$.
For a small enough perturbation $S_\eps$, nonzero components keep their signs (but zero components can become nonzero),
that is, $\sign(x) \le \sign(x_\eps)$. Hence, $\pi \in \overline{\sign(S_\eps)}$.
\end{proof}

We start by studying injectivity.

\begin{lem} \label{pert_inj_cc}
Let $S,\tilde S$ be subspaces of $\R^n$. If $\sign(S) \cap \sign(\tilde S_\eps^\perp) = \{0\}$ for all small perturbations $\tilde S_\eps$, % (in the Grassmannian),
then $\sign(S) \subseteq \overline{\sign(\tilde S)}$.
\end{lem}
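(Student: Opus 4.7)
The plan is to prove the contrapositive: assuming some $\pi \in \sign(S)$ with $\pi \notin \overline{\sign(\tilde S)}$ (so $\pi \neq 0$, since $0 \in \overline{\sign(\tilde S)}$), I will construct an arbitrarily small perturbation $\tilde S_\eps$ of $\tilde S$ in the Grassmannian with $\pi \in \sign(\tilde S_\eps^\perp)$, contradicting the hypothesis.

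The first step converts the assumption $\pi \notin \overline{\sign(\tilde S)}$---equivalently, that no $y \in \tilde S$ satisfies $\sign(y) \ge \pi$---into a dual statement about $\tilde S^\perp$. I apply the coordinatewise sign flip $s \in \{\pm 1\}^n$ with $s_i = -1$ on $\pi^-$ and $s_i = +1$ elsewhere, which carries $\tilde S$ to $s \circ \tilde S$, $\tilde S^\perp$ to $s \circ \tilde S^\perp$, and $\pi$ to $\abs{\pi} \in \{0,+\}^n$. The assumption becomes: no $y' \in s \circ \tilde S$ has $\sign(y') \ge \abs{\pi}$. Corollary~\ref{minty1} from Appendix~\ref{ta} then yields a nonzero $\tau' \in \sign((s \circ \tilde S)^\perp)_\zp$ with $\tau' \le \abs{\pi}$, and undoing the flip gives a nonzero $z \in \tilde S^\perp$ with $\sign(z) \le \pi$. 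In words, $z$ already has the ``right'' sign at each of its nonzero coordinates, possibly vanishes at some indices of $\supp(\pi)$, and must vanish on all of $\pi^0$.

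The second step promotes $z$ to a vector of sign exactly $\pi$ by a small perturbation of $\tilde S^\perp$. Define $v \in \R^n$ by $v_i = +1$ for $i \in \pi^+ \setminus \supp(z)$, $v_i = -1$ for $i \in \pi^- \setminus \supp(z)$, and $v_i = 0$ otherwise, and set $z_\eps := z + \eps v$. A coordinatewise check gives $\sign(z_\eps) = \pi$ for every $\eps > 0$: on $\supp(z)$ the sign of $z$ agrees with $\pi$ (because $\sign(z) \le \pi$) and is preserved since $v$ vanishes there; on $\supp(\pi) \setminus \supp(z)$ the term $\eps v_i$ supplies the correct sign; and on $\pi^0$ both $z$ and $v$ vanish. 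Extending $z$ to a basis $z, u_2, \ldots, u_d$ of $\tilde S^\perp$ and replacing $z$ by $z_\eps$ yields $T_\eps := \Span(z_\eps, u_2, \ldots, u_d)$, a $d$-dimensional subspace converging to $\tilde S^\perp$ in the Grassmannian as $\eps \to 0$. Setting $\tilde S_\eps := T_\eps^\perp$ gives a correspondingly small perturbation of $\tilde S$ (since $(\cdot)^\perp$ is continuous on the Grassmannian), and by construction $z_\eps \in \tilde S_\eps^\perp$ with $\sign(z_\eps) = \pi$, so $\pi \in \sign(S) \cap \sign(\tilde S_\eps^\perp) \setminus \{0\}$---the desired contradiction.

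The main obstacle is pinning down the precise form of the theorem of alternatives in the first step: Corollary~\ref{minty1}, as used earlier in the paper, is phrased for nonnegative sign vectors, so either I need a more general Minty/Stiemke-type lemma from Appendix~\ref{ta}, or the sign-flip reduction above must be carried out explicitly to reduce the general $\pi \in \{-,0,+\}^n$ case to the nonnegative one. Once that duality is in hand, the perturbation construction and the continuity of orthogonal complement on the Grassmannian are routine.
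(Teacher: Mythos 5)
Your proof is correct and follows essentially the same route as the paper: contrapositive, then Corollary~\ref{minty1} to produce a nonzero $z \in \tilde S^\perp$ with $\sign(z) \le \pi$, then a perturbation $z + \eps v$ with sign exactly $\pi$ realized as a small motion of the subspace (the paper uses an orthogonal matrix close to the identity where you swap a basis vector and take orthogonal complements, which is an equivalent device). Your worry about Corollary~\ref{minty1} is unfounded --- it is stated in the appendix for arbitrary nonzero $\sigma \in \{-,0,+\}^n$, so your sign-flip reduction is harmless but unnecessary.
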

\begin{proof}
Suppose $\sign(S) \subseteq \overline{\sign(\tilde S)}$ does not hold.
Then there is a nonzero sign vector $\pi \in \sign(S)$ with $\pi \notin \overline{\sign(\tilde S)}$.
We will find a small perturbation $\tilde S_\eps$ such that $\pi \in \sign(\tilde S^\perp_\eps)$ and hence $\sign(S) \cap \sign(\tilde S_\eps^\perp) = \{0\}$ is violated.
\par
By Corollary~\ref{minty1} in Appendix~\ref{ta},
the nonexistence of $\rho \in \sign(\tilde S)$ with $\rho \ge \pi$
implies the existence of a nonzero $\tilde \tau \in \sign(\tilde S^\perp)$ with $\tilde \tau \le \pi$.
If $\tilde \tau = \pi$, then $\pi \in \sign(\tilde S^\perp)$, as desired.
Otherwise, let $\tilde \tau = \sign(x)$ for $x \in \tilde S^\perp$.
We find a perturbation $x_\eps = x+\eps e$ with $\eps>0$ small and $e \in \R^n$ such that $\sign(x_\eps) = \pi$.
In particular, we choose $e_i=1$ if $x_i=0$ and $i \in \pi^+$, $e_i=-1$ if $x_i=0$ and $i \in \pi^-$, and $e_i=0$ otherwise.
Then, we rescale $x_\eps$ such that $|x_\eps|=|x|$.
Finally, we find an orthogonal matrix $U \in \R^{n \times n}$ (close to the identity) such that $U x = x_\eps$.
Then $x_\eps = Ux \perp U\tilde S = \tilde S_\eps$, that is, $x_\eps \in \tilde S_\eps^\perp$ and $\pi \in \sign(\tilde S_\eps^\perp)$, as desired.
\end{proof}

\begin{lem} \label{cc_i}
Let $S,\tilde S$ be subspaces of $\R^n$.
If $\sign(S) \subseteq \overline{\sign(\tilde S)}$,
then $\sign(S) \cap \sign(\tilde S^\perp) = \{0\}$.
\end{lem}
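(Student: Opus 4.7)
The plan is to prove the contrapositive by a direct orthogonality argument. Suppose there exists a nonzero $\pi \in \sign(S) \cap \sign(\tilde S^\perp)$; I will derive a contradiction with $\sign(S) \subseteq \overline{\sign(\tilde S)}$. Write $\pi = \sign(z)$ for some $z \in \tilde S^\perp$.

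Since $\pi \in \sign(S) \subseteq \overline{\sign(\tilde S)}$, by the definition of the closure, there exists $\rho \in \sign(\tilde S)$ with $\pi \le \rho$. Pick $y \in \tilde S$ with $\sign(y) = \rho$. The key observation is that $\pi \le \rho$ forces the signs of $y$ and $z$ to match on $\supp(\pi)$: if $\pi_i = +$ then $\rho_i = +$, so $y_i > 0$ and $z_i > 0$; if $\pi_i = -$ then $\rho_i = -$, so $y_i < 0$ and $z_i < 0$; and if $\pi_i = 0$ then $z_i = 0$. Consequently $y_i z_i \ge 0$ for every $i$, with strict inequality for some $i \in \supp(\pi) \neq \emptyset$. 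This gives $y \cdot z > 0$, contradicting $y \in \tilde S$, $z \in \tilde S^\perp$.

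Hence no such nonzero $\pi$ can exist, proving $\sign(S) \cap \sign(\tilde S^\perp) = \{0\}$. There is no real obstacle here: the entire content of the lemma is the compatibility of the componentwise partial order on sign vectors with the Euclidean orthogonality between $\tilde S$ and $\tilde S^\perp$. The only care needed is to use the correct direction of the inequality in the definition of $\overline{T}$ (namely, $\pi$ lies below some element of $\sign(\tilde S)$), so that nonzero components of $\pi$ are reproduced verbatim by $\rho$ and contribute nonnegative terms to the inner product $y \cdot z$.
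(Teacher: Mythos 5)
Your proof is correct and follows essentially the same route as the paper: extract $\rho \in \sign(\tilde S)$ with $\rho \ge \pi$ from the closure condition and contradict the orthogonality of $\tilde S$ and $\tilde S^\perp$. The only difference is that you verify the orthogonality clash directly on representative vectors $y \in \tilde S$, $z \in \tilde S^\perp$ (computing $y \cdot z > 0$), whereas the paper phrases it via sign-vector orthogonality and cites Corollary~\ref{orthogonal}; your computation is just the easy inclusion $\sign(\tilde S^\perp) \subseteq \sign(\tilde S)^\perp$ made explicit.
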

\begin{proof}
Assume there exists a nonzero $\tilde \tau \in \sign(S) \cap \sign(\tilde S^\perp)$.
If $\sign(S) \subseteq \overline{\sign(\tilde S)}$, then there exists $\rho \in \sign(\tilde S)$ with $\rho \ge \tilde \tau$.
In particular, $\tilde \tau \cdot \rho \neq 0$, thereby contradicting $\tilde \tau \in \sign(\tilde S)^\perp = \sign(\tilde S)^\perp$ and $\rho \in \sign(\tilde S)$. Cf.~Corollary~\ref{orthogonal} in Appendix~\ref{ta}.
\end{proof}

\begin{pro} \label{pert_inj=cc}
Let $S,\tilde S$ be subspaces of $\R^n$.
Then $\sign(S) \cap \sign(\tilde S_\eps^\perp) = \{0\}$ for all small perturbations $\tilde S_\eps$ % (in the Grassmannian)
if and only if $\sign(S) \subseteq \overline{\sign(\tilde S)}$.
\end{pro}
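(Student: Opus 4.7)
The plan is to piece together the two lemmas immediately preceding the statement, plus the closure property recorded right after the definition. The forward direction ($\Rightarrow$) is already done: it is exactly Lemma~\ref{pert_inj_cc}, which shows that preservation of $\sign(S) \cap \sign(\tilde S^\perp) = \{0\}$ under arbitrarily small perturbations of $\tilde S$ forces $\sign(S) \subseteq \overline{\sign(\tilde S)}$. So I would simply cite it.

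For the converse ($\Leftarrow$), the key observation is that the closure condition, once it holds for $\tilde S$, automatically holds for every sufficiently small perturbation $\tilde S_\eps$. Indeed, Lemma~\ref{pert} (applied to $\tilde S$) gives $\sign(\tilde S) \subseteq \overline{\sign(\tilde S_\eps)}$. By the monotonicity property noted right after the definition of closure, namely $T_1 \subseteq \overline{T_2}$ implies $\overline{T_1} \subseteq \overline{T_2}$, this upgrades to $\overline{\sign(\tilde S)} \subseteq \overline{\sign(\tilde S_\eps)}$. Chaining with the hypothesis $\sign(S) \subseteq \overline{\sign(\tilde S)}$ yields $\sign(S) \subseteq \overline{\sign(\tilde S_\eps)}$.

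At this point I invoke Lemma~\ref{cc_i} applied to the pair $(S,\tilde S_\eps)$: the containment just established implies $\sign(S) \cap \sign(\tilde S_\eps^\perp) = \{0\}$, which is the desired conclusion, valid for every sufficiently small perturbation $\tilde S_\eps$.

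I do not anticipate a real obstacle here; the content of the proposition is essentially the combination of Lemmas~\ref{pert_inj_cc}, \ref{cc_i}, and \ref{pert}. The only mildly subtle point worth being explicit about is that the closure grows (rather than shrinks) under perturbation, which is what makes the sufficient condition $\sign(S) \subseteq \overline{\sign(\tilde S)}$ itself \emph{robust} and therefore transferable to $\tilde S_\eps$ before applying Lemma~\ref{cc_i}.
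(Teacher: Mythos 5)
Your proposal is correct and follows the paper's proof essentially verbatim: the forward direction is Lemma~\ref{pert_inj_cc}, and the converse chains Lemma~\ref{pert}, the monotonicity of the closure operator, and Lemma~\ref{cc_i} in exactly the same way.
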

\begin{proof}
($\Rightarrow$): By Lemma~\ref{pert_inj_cc}.
\par
($\Leftarrow$): Assume $\sign(S) \subseteq \overline{\sign(\tilde S)}$.
By Lemma~\ref{pert}, $\sign(\tilde S) \subseteq \overline{\sign(\tilde S_\eps)}$ for all small perturbations $\tilde S_\eps$
which implies $\overline{\sign(\tilde S)} \subseteq \overline{\sign(\tilde S_\eps)}$.
Hence, $\sign(S) \subseteq \overline{\sign(\tilde S_\eps)}$.
By Lemma~\ref{cc_i}, $\sign(S) \cap \sign(\tilde S_\eps^\perp) = \{0\}$.
\end{proof}

\begin{cor} \label{cc_cc}
Let $S,\tilde S$ be subspaces of $\R^n$.
Then
\[
\sign(S) \subseteq \overline{\sign(\tilde S)} \quad \text{if and only if} \quad \sign(S^\perp) \subseteq \overline{\sign(\tilde S^\perp)} .
\]
\end{cor}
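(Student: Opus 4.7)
The plan is to reduce the closure condition to the injectivity‐under‐perturbation condition from Proposition~\ref{pert_inj=cc}, use the symmetry of Corollary~\ref{inj_cor} to swap $S \leftrightarrow S^\perp$ and $\tilde S \leftrightarrow \tilde S^\perp$, and then convert back via Proposition~\ref{pert_inj=cc} applied to the orthogonal complements.

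In more detail, first I would apply Proposition~\ref{pert_inj=cc} to the left-hand side to rewrite
\[
\sign(S) \subseteq \overline{\sign(\tilde S)}
\quad \Longleftrightarrow \quad
\sign(S) \cap \sign(\tilde S_\eps^\perp) = \{0\} \ \text{for all small perturbations } \tilde S_\eps \text{ of } \tilde S.
\]
Next, for each individual perturbation $\tilde S_\eps$, Corollary~\ref{inj_cor} (applied to the pair $S, \tilde S_\eps$, which have common dimension $n-d$ since small perturbations preserve dimension in the Grassmannian) gives
\[
\sign(S) \cap \sign(\tilde S_\eps^\perp) = \{0\}
\quad \Longleftrightarrow \quad
\sign(S^\perp) \cap \sign(\tilde S_\eps) = \{0\}.
\]
Thus the left-hand side is equivalent to $\sign(S^\perp) \cap \sign(\tilde S_\eps) = \{0\}$ holding for every small perturbation $\tilde S_\eps$ of $\tilde S$.

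To finish, I would use that the orthogonal complement map $U \mapsto U^\perp$ is a homeomorphism between the Grassmannians $\operatorname{Gr}(n-d,n)$ and $\operatorname{Gr}(d,n)$, so that as $\tilde S_\eps$ ranges over small perturbations of $\tilde S$, the subspace $\tilde S_\eps^\perp$ ranges precisely over small perturbations of $\tilde S^\perp$, and $(\tilde S_\eps^\perp)^\perp = \tilde S_\eps$. The condition thus rewrites as: for every small perturbation $T_\eta$ of $\tilde S^\perp$, one has $\sign(S^\perp) \cap \sign(T_\eta^\perp) = \{0\}$. A final application of Proposition~\ref{pert_inj=cc}, this time to the pair $S^\perp, \tilde S^\perp$, identifies this with the right-hand side $\sign(S^\perp) \subseteq \overline{\sign(\tilde S^\perp)}$.

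The only step that requires care is the invocation of Corollary~\ref{inj_cor}, which rests on the equal-dimension hypothesis. This is the implicit ambient assumption of the section ($S = \ker W$, $\tilde S = \ker \tilde W$ with $W, \tilde W$ both of full row rank $d$, so $\dim S = \dim \tilde S = n-d$), and it is preserved under small perturbations in the Grassmannian; so no additional argument is needed beyond noting this.
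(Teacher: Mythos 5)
Your proposal is correct and follows essentially the same route as the paper: convert the closure condition to the perturbed-injectivity condition via Proposition~\ref{pert_inj=cc}, swap the roles of the subspaces and their orthogonal complements via Corollary~\ref{inj_cor}, and apply Proposition~\ref{pert_inj=cc} once more to the pair $S^\perp,\tilde S^\perp$. The paper's proof is exactly this (``By Proposition~\ref{pert_inj=cc} twice''), leaving implicit the Grassmannian duality and equal-dimension points that you spell out.
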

\begin{proof}
By Corollary~\ref{inj_cor}, $\sign(S) \cap \sign(\tilde S_\eps^\perp) = \{0\}$ is equivalent to $\sign(S^\perp) \cap \sign(\tilde S_\eps) = \{0\}$.
By Proposition~\ref{pert_inj=cc} twice, the former statement (for all small perturbations $\tilde S_\eps$)
is equivalent to $\sign(S) \subseteq \overline{\sign(\tilde S)}$
and the latter to $\sign(S^\perp) \subseteq \overline{\sign(\tilde S^\perp)}$.
\end{proof}

In terms of the map $\Fc$ (and the associated subspaces $S$ and $\tilde S$), Proposition~\ref{pert_inj=cc} states that
\[
\parbox{0.38\textwidth}{$\Fc$ is injective for all $c>0$ \\ and all small perturbations $\tilde S_\eps$}
\quad \Leftrightarrow \quad
\sign(S) \subseteq \overline{\sign(\tilde S)} .
\]

In Proposition~\ref{cc_bij} and Theorem~\ref{pert_tildeS} below,
we will show that
\[
\sign(S) \subseteq \overline{\sign(\tilde S)}
\quad \Rightarrow \quad
\parbox{0.34\textwidth}{$\Fc$ is bijective for all $c>0$}
\]
and
\[
\parbox{0.38\textwidth}{$\Fc$ is bijective for all $c>0$ \\ and all small perturbations $\tilde S_\eps$}
\quad \Leftrightarrow \quad
\sign(S) \subseteq \overline{\sign(\tilde S)} .
\]

First, we prove that the closure condition
\begin{equation} \tag{cc} \label{cc}
\sign(S) \subseteq \overline{\sign(\tilde S)} 
\end{equation}
implies the bijectivity of $\Fc$ for all $c>0$,
that is, conditions~(i), (ii), and (iii) in Theorem~\ref{bij}.
For an alternative proof, using differential topology, see~\cite{CMPY2018}.

\begin{pro} \label{cc_bij}
If $\sign(S) \subseteq \overline{\sign(\tilde S)}$,
then the map $\Fc$ is a diffeomorphism for all $c>0$.
\end{pro}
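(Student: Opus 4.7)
The plan is to verify the three conditions (i), (ii), (iii) of Theorem~\ref{bij} under the closure hypothesis $\sign(S)\subseteq\overline{\sign(\tilde S)}$. Condition (i), $\sign(S)\cap\sign(\tilde S^\perp)=\{0\}$, is exactly the content of Lemma~\ref{cc_i}, so nothing further is needed there.

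For condition (ii), given a nonzero $\tilde\tau\in\sign(\tilde S^\perp)_\zp$ with $J:=\tilde\tau^+$, I would argue by contradiction: the nonexistence of a nonzero $\tau\in\sign(S^\perp)_\zp$ with $\tau\le\tilde\tau$ amounts to saying that $S^\perp$ contains no nonzero nonnegative vector supported in $J$, which by a Stiemke-type theorem of the alternative (cf.~Appendix~\ref{ta}) forces the existence of $u\in S$ with $u_i>0$ for every $i\in J$. The closure hypothesis applied to $\sign(u)$ produces $\tilde u\in\tilde S$ with $\tilde u_i>0$ on $J$, and then for any $z\in\tilde S^\perp$ realizing $\tilde\tau$ (so $z\ge 0$ with $\supp(z)=J$) one has $\langle\tilde u,z\rangle=\sum_{i\in J}\tilde u_i z_i>0$, contradicting $\tilde u\in\tilde S$ and $z\in\tilde S^\perp$.

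For condition (iii), take $z\in\tilde S^\perp$ with a positive component, set $\tilde\tau:=\sign(z)$, and suppose alternative (a) in Definition~\ref{def} fails: for each $\lambda>0$ appearing as a value of $z$ the sign vector $\pi_\lambda\in\{0,+\}^n$ with $\pi_\lambda^+=\{i\mid z_i=\lambda\}$ lies in $\sign(S)_\zp$, producing $u^\lambda\in S$ nonnegative with support $I_\lambda$, and closure then yields $\tilde u^\lambda\in\tilde S$ with $\tilde u^\lambda_i>0$ on $I_\lambda$. To force alternative (b), I argue by contradiction: if (b) also fails, the same Stiemke argument as in (ii), now with $J:=\supp(\tilde\tau)$, delivers $u^*\in S$ strictly positive on $\supp(\tilde\tau)$ and, by closure, $\tilde u^*\in\tilde S$ with the same strict positivity. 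The key step is then an exchange-type construction: subtracting suitable positive multiples of the $\tilde u^\lambda$'s from $\tilde u^*$ to cancel its positive contributions on $\tilde\tau^+$ level set by level set, thereby producing an element $\tilde v\in\tilde S$ whose pairing $\langle\tilde v,z\rangle$ is strictly signed --- contradicting $z\in\tilde S^\perp$.

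The main obstacle is condition (iii). Unlike in (ii), the sign vector $\tilde\tau$ may now carry negative entries, so the orthogonality equation $\langle\tilde u,z\rangle=0$ admits cancellation between the $\tilde\tau^+$ and $\tilde\tau^-$ contributions and does not by itself yield a contradiction. The delicate point is the exchange argument that must combine the vectors supported on individual positive level sets $I_\lambda$ (from failure of (a)) with the single vector supported on all of $\supp(\tilde\tau)$ (from failure of (b)) in order to strictly sign the residual inner product with $z$.
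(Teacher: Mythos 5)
Your handling of conditions (i) and (ii) is correct and in substance identical to the paper's: (i) is Lemma~\ref{cc_i}, and for (ii) the paper likewise uses Corollary~\ref{minty1} to extract $\pi\in\sign(S)$ with $\pi\ge\tilde\tau$ from the nonexistence of a nonzero $\tau\in\sign(S^\perp)$ with $\tau\le\tilde\tau$, and then contradicts orthogonality after applying the closure condition; your vector-level phrasing is a faithful realization of that argument.

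The genuine gap is exactly where you flag it: condition (iii). The ``exchange-type construction'' is not carried out, and as described it cannot easily be. Once you push each $u^\lambda$ and $u^*$ through the closure condition \emph{separately}, the resulting vectors $\tilde u^\lambda,\tilde u^*\in\tilde S$ have uncontrolled signs outside $I_\lambda$ resp.\ $\supp(\tilde\tau)$, so linear combinations of them inside $\tilde S$ give you no handle on the sign of the pairing with $z$; the level-by-level cancellation you propose has no reason to leave a strictly signed residue. The paper resolves this entirely inside $\sign(S)$, \emph{before} invoking closure, using the composition operation under which $\sign(S)$ is closed: the sign vectors $\pi_\lambda\in\sign(S)_\zp$ (from $\neg$(a)) compose to a single $\pi\in\sign(S)_\zp$ with $\pi^+=\tilde\tau^+$; the failure of (b) gives, via Corollary~\ref{minty1} applied to $\abs{\tilde\tau}$, a $\rho\in\sign(S)$ with $\tilde\tau^+\cup\tilde\tau^-\subseteq\rho^+$; and the composition $\pi'=\pi\circ(-\rho)\in\sign(S)$ then satisfies $\pi'\ge\tilde\tau$ --- the negation of $\rho$ supplies precisely the negative entries on $\tilde\tau^-$ that your argument cannot produce. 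A single application of the closure condition to $\pi'$ yields $\rho'\in\sign(\tilde S)$ with $\rho'\ge\tilde\tau$, contradicting $\tilde\tau\cdot\rho'=0$. (The paper packages this as (cc) $\Rightarrow$ (iv) $\Rightarrow$ (iii), where (iv) is the sign-vector condition of Proposition~\ref{iii}.) So the missing idea is the sign-vector composition $\pi\circ(-\rho)$, which turns the two separate consequences of $\neg$(a) and $\neg$(b) into one element of $\sign(S)$ conformal with $\tilde\tau$, after which the contradiction is the same one you already used in (ii).
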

\begin{proof}
\par
\eqref{cc} $\Rightarrow$ (i): By Lemma~\ref{cc_i}.
\par
\eqref{cc} $\Rightarrow$ (ii):
\par
Assume $\neg$(ii), that is, the existence of a nonzero $\tilde \tau \in \sign(\tilde S^\perp)_\zp$ with $\tau \not\le \tilde \tau$ for all nonzero $\tau \in \sign(S^\perp)_\zp$,
in fact, for all nonzero $\tau \in \sign(S^\perp)$.
By Corollary~\ref{minty1} in Appendix~\ref{ta}, the nonexistence of a nonzero $\tau \in \sign(S^\perp)$ 
with $\tau \le \tilde \tau$
implies the existence of $\pi \in \sign(S)$ with $\pi \ge \tilde \tau$.
\par
Now, if $\sign(S) \subseteq \overline{\sign(\tilde S)}$, then there exists $\rho \in \sign(\tilde S)$ with 
$\rho \ge \pi$ and hence $\rho \ge \tilde \tau$.
In particular, $\tilde \tau \cdot \rho \neq 0$, thereby contradicting $\tilde \tau \in \sign(\tilde S^\perp) = \sign(\tilde S)^\perp$ and $\rho \in \sign(\tilde S)$.
\par
\eqref{cc} $\Rightarrow$ (\iiis) in Proposition~\ref{iii}:
\par
Assume $\neg$(\iiis),
that is,
the existence of $\tilde \tau \in \sign(\tilde S^\perp)$ with $\tilde \tau^+ \neq \emptyset$, $\pi \in \sign(S)_\zp$ with $\pi^+ = \tilde \tau^+$,
and $\rho \in \sign(S)$ with $\tilde \tau^+ \cup \tilde \tau^- \subseteq \rho^+$.
By composition, 
$\pi' = \pi \circ (-\rho) \in \sign(S)$,
where $\pi'_i = +$ for $i \in \tilde \tau^+$ and $\pi'_i = -$ for $i \in \tilde \tau^-$,
that is, 
$\pi' \ge \tilde \tau$.
\par
Now, if $\sign(S) \subseteq \overline{\sign(\tilde S)}$, then there exists $\rho' \in \sign(\tilde S)$ with 
$\rho' \ge \pi'$ and hence $\rho' \ge \tilde \tau$.
In particular, $\tilde \tau \cdot \rho' \neq 0$, thereby contradicting $\tilde \tau \in \sign(\tilde S^\perp) = \sign(\tilde S)^\perp$ and $\rho' \in \sign(\tilde S)$.
\end{proof}

However, the closure condition~\eqref{cc} is not necessary for bijectivity.
Recall that there is a (weakest) sign-vector condition sufficient for bijectivity, 
involving conditions (i), (ii), and (\iiis) in Proposition~\ref{iii}.

\begin{exa} \label{exa:cc}
Let $\Fc$ be given by the matrices
\[
\tilde W =
\begin{pmatrix}
1 & 0 &-1
\end{pmatrix}
\quad \text{and} \quad
W =
\begin{pmatrix}
1 & 1 &-1 
\end{pmatrix}  .
\]
Obviously, $\tilde C = C = \R$.
Now, for $\tau = (+,+,-)^\trans \in \sign(\im W^\trans) = \sign(S^\perp)$,
there is no $\tilde \tau \in \sign(\im \tilde W^\trans) = \sign(\tilde S^\perp)$ with $\tilde \tau \ge \tau$.
Hence, $\sign(S^\perp) \not \subseteq \overline{\sign(\tilde S^\perp)}$, that is, the closure condition~\eqref{cc} does not hold.
Still, there is no nonzero $\pi \in \sign(\ker W)_\zp = \sign(S)_\zp$,
and hence condition (\iiis) holds.
Further, conditions (i) and (ii) hold, and $\Fc$ is bijective for all $c>0$.
\end{exa}

In fact, the closure condition~\eqref{cc} is equivalent to bijectivity for all small perturbations $\tilde S_\eps$.

\begin{thm} \label{pert_tildeS}
The map $\Fc$ is a diffeomorphism for all $c>0$ and all small perturbations $\tilde S_\eps$ % (in the Grassmannian)
if and only if $\sign(S) \subseteq \overline{\sign(\tilde S)}$.
\end{thm}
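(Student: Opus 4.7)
My plan is to derive Theorem~\ref{pert_tildeS} as a short corollary of the two propositions that precede it, Proposition~\ref{cc_bij} and Proposition~\ref{pert_inj=cc}, together with the elementary stability Lemma~\ref{pert}. The heavy lifting (injectivity and nondegeneracy, and the perturbation characterization of robust injectivity) is already in place; the theorem is essentially a bookkeeping statement about how the closure condition~\eqref{cc} propagates to neighbouring subspaces.

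For the backward direction I would assume $\sign(S) \subseteq \overline{\sign(\tilde S)}$ and fix a small perturbation $\tilde S_\eps$. Lemma~\ref{pert} gives $\sign(\tilde S) \subseteq \overline{\sign(\tilde S_\eps)}$, and since set inclusion into a closure upgrades to inclusion of the closures, $\overline{\sign(\tilde S)} \subseteq \overline{\sign(\tilde S_\eps)}$. Chaining with the hypothesis yields $\sign(S) \subseteq \overline{\sign(\tilde S_\eps)}$, so Proposition~\ref{cc_bij} applied with $\tilde S_\eps$ in place of $\tilde S$ tells me that the associated map $\Fc$ is a diffeomorphism for every $c>0$.

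For the forward direction I would assume that $\Fc$ is a diffeomorphism for all $c>0$ and for every sufficiently small perturbation $\tilde S_\eps$. In particular, $\Fc$ is injective for all $c>0$ and all such $\tilde S_\eps$. By Theorem~\ref{inj}, simultaneous injectivity for all $c>0$ is equivalent to $\sign(S) \cap \sign(\tilde S_\eps^\perp) = \{0\}$, so this sign-vector identity holds for every small $\tilde S_\eps$. Proposition~\ref{pert_inj=cc} precisely identifies that robustness of injectivity with the closure condition $\sign(S) \subseteq \overline{\sign(\tilde S)}$, which is what is to be shown.

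There is essentially no technical obstacle here; the only point to be careful about is the direction of the dependence in the monotonicity step (an inclusion into a closure passes to closures, not the other way around), and the observation that in the forward direction one only needs \emph{injectivity} under perturbations, since that already suffices to invoke Proposition~\ref{pert_inj=cc}. Full bijectivity under perturbations is in fact a stronger hypothesis than needed, which is why the equivalence is tight.
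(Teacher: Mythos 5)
Your proposal is correct and follows essentially the same route as the paper: Lemma~\ref{pert} plus the monotonicity of the closure to transfer the closure condition to $\tilde S_\eps$ and then Proposition~\ref{cc_bij} for sufficiency, and reduction to robust injectivity (Proposition~\ref{pert_inj=cc}, whose forward direction is Lemma~\ref{pert_inj_cc}, the lemma the paper cites directly) for necessity. No gaps.
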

\begin{proof}
By Lemma~\ref{pert}, $\sign(S) \subseteq \overline{\sign(\tilde S)}$ implies $\sign(S) \subseteq \overline{\sign(\tilde S_\eps)}$ for all small perturbations~$\tilde S_\eps$.
By Proposition~\ref{cc_bij}, the latter implies the bijectivity of $\Fc$ for all $c>0$ and all small perturbations~$\tilde S_\eps$.

Bijectivity implies injectivity, that is, $\sign(S) \cap \sign(\tilde S_\eps^\perp) = \{0\}$, for all small perturbations~$\tilde S_\eps$.
By Lemma~\ref{pert_inj_cc}, the latter implies $\sign(S) \subseteq \overline{\sign(\tilde S)}$.
\end{proof}

Corollary~\ref{inj_det} relates {\em chirotopes} (signs of maximal minors of $W$ and $\tilde W$)
to {\em vectors} (sign vectors of $S = \ker W$ and $\tilde S = \ker \tilde W$).
By varying over all small perturbations $\tilde S_\eps$, % (in the Grassmannian),
we obtain the following result.

\begin{pro} \label{cc_det}
Let $S,\tilde S$ be subspaces of $\R^n$ of dimension $n-d$ (with $d \le n$).
For every $W, \tilde W \in \R^{d \times n}$ (with full rank $d$) such that $S = \ker W$ and $\tilde S = \ker \tilde W$,
the following statements are equivalent.
\begin{enumerate}
\item
$\sign(S) \subseteq \overline{\sign(\tilde S)}$.
\item
$\det(W_I) \neq 0$ implies $\det(W_I) \det(\tilde W_I) > 0$ for all subsets $I \subseteq [n]$ of cardinality~$d$
(or `$< 0$' for all $I$).
\end{enumerate}
\end{pro}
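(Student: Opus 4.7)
The plan is to reduce statement~(1) to the simultaneous injectivity of $\Fc$ for all small perturbations of $\tilde S$ via Proposition~\ref{pert_inj=cc}, and then to translate that injectivity into a chirotope condition via Corollary~\ref{inj_det} applied both to $(W,\tilde W)$ and to nearby pairs $(W,\tilde W_\eps)$.

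I would first fix an orientation. Since $W$ has full rank, pick $I_* \subseteq [n]$ of cardinality~$d$ with $\det(W_{I_*}) \neq 0$, and, replacing $\tilde W$ by $-\tilde W$ if necessary, assume that~(2) reads $\det(W_I)\det(\tilde W_I) > 0$ whenever $\det(W_I) \neq 0$. By continuity of the determinant, every small perturbation $\tilde W_\eps$ of $\tilde W$ still satisfies $\det(W_{I_*})\det(\tilde W_{\eps,I_*}) > 0$, which rules out the ``$\leq 0$'' branch of Corollary~\ref{inj_det} when that corollary is applied to $(W,\tilde W_\eps)$.

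For the direction (2)$\Rightarrow$(1), continuity yields $\det(W_I)\det(\tilde W_{\eps,I}) \geq 0$ for every $I$ and every small $\tilde W_\eps$ (strict at $I_*$); Corollary~\ref{inj_det} then gives $\sign(S) \cap \sign(\tilde S_\eps^\perp) = \{0\}$ for every nearby $\tilde S_\eps = \ker \tilde W_\eps$, and Proposition~\ref{pert_inj=cc} delivers $\sign(S) \subseteq \overline{\sign(\tilde S)}$. For the direction (1)$\Rightarrow$(2), Proposition~\ref{pert_inj=cc} combined with Corollary~\ref{inj_det} for the unperturbed pair already gives $\det(W_I)\det(\tilde W_I) \geq 0$ for every~$I$. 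To upgrade $\geq$ to $>$, suppose for contradiction that some $I_0$ satisfies $\det(W_{I_0}) \neq 0$ but $\det(\tilde W_{I_0}) = 0$. I would produce an arbitrarily small perturbation $\tilde W_\eps$ of $\tilde W$ with $\sign\det(\tilde W_{\eps,I_0}) = -\sign\det(W_{I_0})$; such a $\tilde W_\eps$ satisfies $\det(W_{I_0})\det(\tilde W_{\eps,I_0}) < 0$ while $\det(W_{I_*})\det(\tilde W_{\eps,I_*}) > 0$ by continuity, so its chirotope lies in neither branch of Corollary~\ref{inj_det}, contradicting the injectivity of $\Fc$ for the perturbed $\tilde S_\eps$ and hence~(1) via Proposition~\ref{pert_inj=cc}.

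The main obstacle is this sign-flipping perturbation lemma: given a singular $d\times d$ matrix $A$, realize both signs of $\det$ arbitrarily close to $A$. The argument is cleanest when $A$ has rank $d-1$, since then some cofactor $C_{ij}(A) \neq 0$ and perturbing column $j$ of $A$ by $\pm \eps\, e_i$ yields $\det = \pm \eps\, C_{ij}(A) + O(\eps^2)$, which produces either sign for small $\eps>0$. When $A$ has lower rank $r < d-1$, one first adds a small correction (of rank $d-1-r$, using vectors in the orthogonal complement of the column space) to raise the rank to $d-1$ while remaining on $\{\det = 0\}$, and then applies the cofactor argument at a scale smaller than the first correction. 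Throughout, full rank of $\tilde W_\eps$ is automatic for small perturbations, so $\tilde S_\eps = \ker \tilde W_\eps$ is a well-defined small perturbation of $\tilde S$ in the Grassmannian.
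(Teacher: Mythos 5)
Your proposal is correct and follows essentially the same route as the paper: reduce statement~1 to perturbed injectivity via Proposition~\ref{pert_inj=cc}, translate to the minor condition via Corollary~\ref{inj_det}, and conclude using the fact that a vanishing maximal minor can be perturbed to either sign. You supply a detailed proof of that sign-flipping step (which the paper merely asserts); the only cosmetic slips are that replacing $\tilde W$ by $-\tilde W$ only flips all maximal minors when $d$ is odd (negating a single row works in general), and that $I_*$ should be chosen with $\det(W_{I_*})\det(\tilde W_{I_*}) \neq 0$ (guaranteed by Corollary~\ref{inj_det}) rather than merely $\det(W_{I_*}) \neq 0$.
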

\begin{proof}
By Proposition~\ref{pert_inj=cc}, statement~1 is equivalent to 
$\sign(S) \cap \sign(\tilde S_\eps^\perp) = \{0\}$ for all small perturbations $\tilde S_\eps$. % (in the Grassmannian).
By Corollary~\ref{inj_det}, this is equivalent to
\begin{itemize}
\item[]
$\det(W_I) \det(\tilde W_{\eps,I}) \ge 0$ for all $I \subseteq [n]$ of cardinality $d$ (or `$\le 0$' for all $I$)
and $\det(W_I) \det(\tilde W_{\eps,I}) \neq 0$ for some~$I$, \\
for all small perturbations $\tilde W_\eps$ of $\tilde W$.
\end{itemize}
This is equivalent to statement~2,
thereby using that $\det(\tilde W_I) = 0$ implies $\det(\tilde W_{\eps_1,I}) < 0$ and $\det(\tilde W_{\eps_2,I}) > 0$ 
for some small perturbations~$\tilde W_{\eps_1}$ and~$\tilde W_{\eps_2}$.
\end{proof}

Now we can extend Theorem~\ref{pert_tildeS}.
In particular, we can characterize the bijectivity of $\Fc$ for all $c > 0$ and all small perturbations $\tilde S_\eps$
not only in terms of sign vectors, but also in terms of maximal minors.

\begin{cor} 
% Let $\Fc$ be as defined above. % (and $d=\tilde d$).
The following statements are equivalent:
\begin{enumerate}
\item
$\Fc$ is a diffeomorphism for all $c>0$ and all small perturbations $\tilde S_\eps$. % (in the Grassmannian).
\item
$\sign(S) \subseteq \overline{\sign(\tilde S)}$.
\item
$\det(W_I) \neq 0$ implies $\det(W_I) \det(\tilde W_I) > 0$ for all subsets $I \subseteq [n]$ of cardinality~$d$
(or `$< 0$' for all $I$).
\end{enumerate}
\end{cor}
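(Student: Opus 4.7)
The plan is essentially to observe that this corollary is a direct concatenation of two results already proved immediately above it, namely Theorem~\ref{pert_tildeS} and Proposition~\ref{cc_det}. Since both of these have just been established, the proof amounts to chaining them together, with no genuinely new content needed.

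More concretely, I would first invoke Theorem~\ref{pert_tildeS} to obtain the equivalence of statement~1 (the map $\Fc$ is a diffeomorphism for all $c>0$ and all small perturbations $\tilde S_\eps$) with statement~2 (the closure condition $\sign(S)\subseteq\overline{\sign(\tilde S)}$). Then I would appeal to Proposition~\ref{cc_det} to get the equivalence of statement~2 with statement~3 (the sign condition on products of maximal minors). Concatenating these two equivalences yields the three-way equivalence.

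Since everything reduces to quoting prior results, there is no real obstacle. The only point worth flagging is that Theorem~\ref{pert_tildeS} is stated for the subspaces $S, \tilde S$ abstractly, while statement~3 of the corollary refers to specific matrices $W, \tilde W$ with $\ker W = S$ and $\ker \tilde W = \tilde S$; Proposition~\ref{cc_det} is formulated precisely to bridge this — it says that the closure condition $\sign(S)\subseteq \overline{\sign(\tilde S)}$ is equivalent to the determinantal condition for \emph{every} choice of full-rank representatives $W,\tilde W$ of $S,\tilde S$. Thus the chain $1\Leftrightarrow 2\Leftrightarrow 3$ is valid as stated, and the proof can be given in essentially one line: by Theorem~\ref{pert_tildeS}, (1)$\Leftrightarrow$(2); by Proposition~\ref{cc_det}, (2)$\Leftrightarrow$(3).
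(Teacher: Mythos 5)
Your proposal is correct and matches the paper's own proof exactly: the paper proves (1)$\Leftrightarrow$(2) by Theorem~\ref{pert_tildeS} and (2)$\Leftrightarrow$(3) by Proposition~\ref{cc_det}, just as you do. Nothing further is needed.
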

\begin{proof}
(1 $\Leftrightarrow$ 2): 
By Theorem~\ref{pert_tildeS}. 
(2 $\Leftrightarrow$ 3): 
By Proposition~\ref{cc_det}.
\end{proof}

%%%%%%%%% %%%%%%%%% %%%%%%%%% %%%%%%%%% %%%%%%%%%

\subsection{Perturbations of the coefficients} \label{sec:coeff}

Next, we consider small perturbations of the subspace $S$, corresponding to the coefficients $W$ in $\Fc$.
We start by studying injectivity.
By Corollary~\ref{inj_cor}, the perturbed injectivity condition $\sign(S_\eps) \cap \sign(\tilde S^\perp) = \{0\}$ is equivalent to $\sign(\tilde S) \cap \sign(S_\eps^\perp) = \{0\}$.
By exchanging the roles of $S$ and $\tilde S$ in Proposition~\ref{pert_inj=cc},
we immediately obtain the desired result.

\begin{cor} \label{pert_inj=cc'}
Let $S,\tilde S$ be subspaces of $\R^n$.
Then $\sign(S_\eps) \cap \sign(\tilde S^\perp) = \{0\}$ for all small perturbations $S_\eps$ % (in the Grassmannian)
if and only if $\sign(\tilde S) \subseteq \overline{\sign(S)}$.
\end{cor}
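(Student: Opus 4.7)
The plan is to deduce this corollary directly from Proposition~\ref{pert_inj=cc} by exploiting the symmetric form of the injectivity criterion provided by Corollary~\ref{inj_cor}. The key preliminary observation is that $S$, $\tilde S$, and every small perturbation $S_\eps$ share the same dimension $n-d$ (small perturbations are understood in the Grassmannian, which preserves dimension), so Corollary~\ref{inj_cor} applies uniformly to each $S_\eps$.

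First, I would rewrite the injectivity condition via Corollary~\ref{inj_cor}: for every perturbation $S_\eps$,
\[
\sign(S_\eps) \cap \sign(\tilde S^\perp) = \{0\} \quad \Leftrightarrow \quad \sign(\tilde S) \cap \sign(S_\eps^\perp) = \{0\}.
\]
Thus the hypothesis ``$\sign(S_\eps) \cap \sign(\tilde S^\perp) = \{0\}$ for all small perturbations $S_\eps$'' is equivalent to ``$\sign(\tilde S) \cap \sign(S_\eps^\perp) = \{0\}$ for all small perturbations $S_\eps$''. Next, I would apply Proposition~\ref{pert_inj=cc} with the roles of $S$ and $\tilde S$ interchanged. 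Reading that proposition schematically as ``$\sign(A) \cap \sign(B_\eps^\perp) = \{0\}$ for all small perturbations $B_\eps$ if and only if $\sign(A) \subseteq \overline{\sign(B)}$'' and substituting $A := \tilde S$, $B := S$, yields precisely the desired equivalence with $\sign(\tilde S) \subseteq \overline{\sign(S)}$.

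There is essentially no obstacle. The whole argument is a one-line substitution, and the only point worth checking is that Proposition~\ref{pert_inj=cc} was proved in sufficient generality to allow swapping $S$ and $\tilde S$. Its proof invokes only Lemmas~\ref{pert}, \ref{pert_inj_cc}, and \ref{cc_i}, none of which use any asymmetry between the two subspaces, so the substitution is clearly legitimate.
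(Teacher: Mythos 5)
Your proof is correct and follows exactly the paper's route: apply Corollary~\ref{inj_cor} to convert the perturbed condition into $\sign(\tilde S) \cap \sign(S_\eps^\perp) = \{0\}$, then invoke Proposition~\ref{pert_inj=cc} with the roles of $S$ and $\tilde S$ exchanged. Your additional check that perturbations in the Grassmannian preserve dimension (so Corollary~\ref{inj_cor} applies) is a sensible, if routine, bit of extra care.
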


The closure condition
\begin{equation} \tag{cc'} \label{cc'}
\sign(\tilde S) \subseteq \overline{\sign(S)}
\end{equation}
is equivalent to $\sign(\tilde S^\perp) \subseteq \overline{\sign(S^\perp)}$, by Corollary~\ref{cc_cc}.
As opposed to (cc),
it does not imply bijectivity, in fact, it implies conditions (i) and (iii) in Theorem~\ref{bij}, but not condition (ii).

\begin{pro} \label{cc'_i_iii}
If $\sign(\tilde S) \subseteq \overline{\sign(S)}$,
then conditions~(i) and (iii) in Theorem~\ref{bij} hold.
\end{pro}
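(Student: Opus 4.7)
The plan is to treat (i) and (iii) separately. Condition (i) will follow very quickly from prior injectivity results. For (iii) my strategy is to show that, under the hypothesis $\sign(\tilde S)\subseteq\overline{\sign(S)}$, the \emph{first} bullet in Definition~\ref{def} is already forced for every relevant $z$, so the second bullet plays no role.

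For (i), I will apply Lemma~\ref{cc_i} with the roles of $S$ and $\tilde S$ interchanged: the hypothesis gives $\sign(\tilde S)\cap\sign(S^\perp)=\{0\}$, and Corollary~\ref{inj_cor} (applicable since $\dim S=\dim\tilde S=n-d$) converts this symmetrically into $\sign(S)\cap\sign(\tilde S^\perp)=\{0\}$, which is (i).

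For (iii), I would fix $z\in\tilde S^\perp$ with a positive component and set $\tilde\tau=\sign(z)$, so that $\tilde\tau^+\ne\emptyset$. Arguing by contradiction, I will assume that the first alternative of Definition~\ref{def} fails, i.e.\ that for every $\la>0$ the nonnegative sign vector $\pi^{(\la)}\in\{0,+\}^n$ with $(\pi^{(\la)})^+=\{i\mid z_i=\la\}$ lies in $\sign(S)_\zp$. Since sign vectors of a linear subspace are closed under the composition $\circ$ (consider $x+\varepsilon x'\in S$ for small $\varepsilon>0$), and since the positive level sets $\{i\mid z_i=\la\}$ with $\la>0$ are pairwise disjoint and finite in number, composing the finitely many $\pi^{(\la)}$ yields a single $\pi^*\in\sign(S)_\zp$ with $(\pi^*)^+=\tilde\tau^+$.

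The contradiction then comes from orthogonality. By Corollary~\ref{cc_cc} applied with $S$ and $\tilde S$ swapped, the hypothesis is equivalent to $\sign(\tilde S^\perp)\subseteq\overline{\sign(S^\perp)}$, so there exists $\sigma\in\sign(S^\perp)$ with $\sigma\ge\tilde\tau$; in particular $\sigma_i=+$ on all of $\tilde\tau^+$. The componentwise product $\sigma\cdot\pi^*$ is then $+$ on the non-empty set $\tilde\tau^+$ and $0$ elsewhere, so it is neither everywhere zero nor of mixed sign, hence $\sigma\cdot\pi^*\ne 0$. This contradicts $\sigma\in\sign(S^\perp)=\sign(S)^\perp$ (Corollary~\ref{orthogonal}) together with $\pi^*\in\sign(S)$. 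The one step that requires a little care is the composition itself: pinning down $(\pi^*)^+$ to equal $\tilde\tau^+$ \emph{exactly}, rather than merely containing it, relies on the disjointness of the positive level sets of $z$ and on each $\pi^{(\la)}$ being nonnegative.
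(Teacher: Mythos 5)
Your proof is correct and follows essentially the same route as the paper's: condition (i) via the symmetry of the injectivity criterion, and condition (iii) via the orthogonality clash between a nonnegative $\pi \in \sign(S)_\zp$ with $\pi^+ = \tilde\tau^+$ and a covector $\rho \in \sign(S^\perp)$ with $\rho \ge \tilde\tau$ supplied by Corollary~\ref{cc_cc}. The only organizational difference is that you verify the first bullet of Definition~\ref{def} directly, inlining the level-set composition argument from the proof of Proposition~\ref{iii}, whereas the paper routes through the sufficient sign-vector condition~(\iiis).
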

\begin{proof}
\par
\eqref{cc'} $\Rightarrow$ (i): By Corollary~\ref{pert_inj=cc'}.
\par
\eqref{cc'} $\Rightarrow$ (\iiis) in Proposition~\ref{iii}:
\par
Assume $\neg$(\iiis) and hence 
the existence of $\tilde \tau \in \sign(\tilde S^\perp)$ and $\pi \in \sign(S)_\zp$ with $\tilde \tau^+ = \pi^+ \neq \emptyset$,
in particular, 
$\tilde \tau \ge \pi$.
Now, if $\sign(\tilde S) \subseteq \overline{\sign(S)}$, then there exists $\rho \in \sign(S^\perp)$ with 
$\rho \ge \tilde \tau$ and hence $\rho \ge \pi$. In particular,
$\pi \cdot \rho \neq 0$, thereby contradicting $\pi \in \sign(S)$ and $\rho \in \sign(S^\perp) = \sign(S)^\perp$.
\end{proof}

\begin{exa}
Let $\Fc$ be given by the matrices
\[
\tilde W =
\begin{pmatrix}
1 & 0 &-1 \\
0 & 1 & 0
\end{pmatrix}
\quad \text{and} \quad
W =
\begin{pmatrix}
1 & 1 & 0 \\
0 & 1 & 1
\end{pmatrix}  .
\]
Obviously, $\tilde C = \R \times \R_{\ge0}$ and $C = \R_{\ge0}^2$.
Now, $\tilde S = \ker \tilde W = \im (1,0,1)^\trans$, $S = \ker W = \im (1,-1,1)^\trans$, and hence $\sign(\tilde S) \subseteq \overline{\sign(S)}$.
However, $\sign(\tilde S^\perp)_\zp = \{ (0,0,0)^\trans$, $(0,+,0)^\trans \}$, $\sign(S^\perp)_\zp = \{ (0,0,0)^\trans$, $(0,+,+)^\trans$, $(+,+,0)^\trans \}$, 
and hence condition (ii) does not hold.
\end{exa}

Interestingly, conditions (cc') and (ii) imply the equality of the face lattices of $C$ and $\tilde C$.

\begin{pro} \label{pro:fl}
If $\sign(\tilde S) \subseteq \overline{\sign(S)}$ and condition (ii) in Theorem~\ref{bij} holds,
then $\sign(S^\perp)_\zp = \sign(\tilde S^\perp)_\zp$.
\end{pro}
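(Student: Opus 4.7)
My plan is to prove the two set inclusions separately. As preparation, Corollary~\ref{cc_cc} (applied with the roles of $S$ and $\tilde S$ exchanged) converts the hypothesis $\sign(\tilde S) \subseteq \overline{\sign(S)}$ into the dual form $\sign(\tilde S^\perp) \subseteq \overline{\sign(S^\perp)}$, and Corollary~\ref{orthogonal} gives $\sign(V^\perp) = \sign(V)^\perp$ for any subspace $V$. Both facts will be used repeatedly.

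For the inclusion $\sign(\tilde S^\perp)_\zp \subseteq \sign(S^\perp)_\zp$, given a nonzero $\tilde \tau \in \sign(\tilde S^\perp)_\zp$, the dual closure yields some $\rho \in \sign(S^\perp)_\zp$ with $\rho \ge \tilde \tau$. To force equality I would assume $\rho > \tilde \tau$ strictly, pick $j \in \rho^+ \cap \tilde \tau^0$, and apply condition (ii) to obtain a nonzero $\tau \in \sign(S^\perp)_\zp$ with $\tau \le \tilde \tau$ (so in particular $\tau_j = 0$). A contradiction should then follow by lifting a suitable element of $\sign(\tilde S)$ to $\sign(S)$ via (cc') and exploiting $\tau \in \sign(S)^\perp$ (or $\rho \in \sign(S)^\perp$) to violate orthogonality at the index $j$.

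For the inclusion $\sign(S^\perp)_\zp \subseteq \sign(\tilde S^\perp)_\zp$, given a nonzero $\tau \in \sign(S^\perp)_\zp$, I apply the Minty-type alternative from Corollary~\ref{minty1} to $\tau$ in the oriented matroid of $\tilde S$. The branch producing $\rho \in \sign(\tilde S)$ with $\rho \ge \tau$ is excluded as follows: by (cc') lift $\rho$ to $\pi \in \sign(S)$ with $\pi \ge \rho \ge \tau$; then the products $\tau_i \pi_i$ are $+$ for $i \in \tau^+$ and $0$ for $i \in \tau^0$, so $\tau \cdot \pi \neq 0$, contradicting $\tau \in \sign(S)^\perp$. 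Hence the other branch produces a nonzero $\tilde \tau \in \sign(\tilde S^\perp)$ with $\tilde \tau \le \tau$; since $\tau \ge 0$ this forces $\tilde \tau \in \sign(\tilde S^\perp)_\zp$. To upgrade $\tilde \tau$ to $\tau$, I compose all such $\tilde \tau$ (the result remains in $\sign(\tilde S^\perp)_\zp$ by closure of sign-vector systems under composition); for any coordinate $j \in \tau^+$ missed by this composition, applying the Minty alternative to the singleton sign vector $\tau^{(j)}$ with $\tau^{(j)}_j = +$ and zeros elsewhere either returns $\tau^{(j)} \in \sign(\tilde S^\perp)_\zp$ (contradicting that $j$ is missed, since $\tau^{(j)} \le \tau$ belongs to the composed family), or produces $\rho' \in \sign(\tilde S)$ with $\rho'_j = +$, which is ruled out by the same lifting argument as above.

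The main obstacle is the upgrading step in the reverse inclusion, namely, showing that the composition of all $\tilde \tau \in \sign(\tilde S^\perp)_\zp$ below $\tau$ does not fall strictly short of $\tau$. This requires the careful indexwise use of the Minty alternative together with both (cc') (for the lifting argument) and condition (ii) (to interpret the intermediate sign vectors); all other steps are fairly direct manipulations of the orthogonality relation $\sign(V^\perp) = \sign(V)^\perp$.
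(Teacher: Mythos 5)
Your reverse inclusion, $\sign(S^\perp)_\zp \subseteq \sign(\tilde S^\perp)_\zp$, has a decisive gap, and it sits exactly where you suspected. In the upgrading step you dismiss the Minty branch that produces $\rho' \in \sign(\tilde S)$ with $\rho'_j = +$ ``by the same lifting argument as above''. But that earlier argument worked because $\pi' \ge \rho \ge \tau$ forced \emph{every} product $\tau_i\pi'_i$ to be nonnegative; here you only control the single coordinate $j$, and the lift $\pi' \in \sign(S)$ of $\rho'$ may satisfy $\tau_i\pi'_i = -$ at some other $i \in \tau^+$, in which case $\tau \cdot \pi' = 0$ and there is no conflict with $\tau \in \sign(S)^\perp$. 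The gap is not cosmetic: your argument for this inclusion never uses condition (ii) in any load-bearing way, and (cc$'$) alone does \emph{not} imply $\sign(S^\perp)_\zp \subseteq \sign(\tilde S^\perp)_\zp$ --- the example following Proposition~\ref{cc'_i_iii} is a counterexample. There $\tau = (0,+,+)^\trans \in \sign(S^\perp)_\zp$, the only nonzero minorant of $\tau$ in $\sign(\tilde S^\perp)_\zp$ is $(0,+,0)^\trans$, and your singleton Minty alternative at the missed coordinate $j=3$ lands in the branch $\rho' = (+,0,+)^\trans \in \sign(\tilde S)$, which lifts to $\pi' = (+,-,+)^\trans \in \sign(S)$ with $\tau \cdot \pi' = 0$: no contradiction, and indeed the conclusion is false for that pair of subspaces because (ii) fails. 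Your forward inclusion also starts from an unjustified claim: the dual closure $\sign(\tilde S^\perp) \subseteq \overline{\sign(S^\perp)}$ yields some $\rho \in \sign(S^\perp)$ with $\rho \ge \tilde\tau$, but $\rho$ is unconstrained on $\tilde\tau^0$ and need not be nonnegative; and the subsequent ``a contradiction should then follow'' is a hope, not an argument.

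The paper's proof avoids both problems by restricting to elements of \emph{minimal support}. From (cc$'$) one obtains the dual of (ii), call it (ii$'$): every nonzero $\tau \in \sign(S^\perp)_\zp$ admits a nonzero $\tilde\tau \in \sign(\tilde S^\perp)_\zp$ with $\tilde\tau \le \tau$. If $\tau$ has minimal support, applying condition (ii) to that $\tilde\tau$ returns a nonzero $\tau' \in \sign(S^\perp)_\zp$ with $\tau' \le \tilde\tau \le \tau$, and minimality squeezes $\tau' = \tilde\tau = \tau$; the symmetric argument handles minimal-support elements of $\sign(\tilde S^\perp)_\zp$. Hence the two sets have the same minimal-support elements, and Theorem~\ref{conformal} (conformal decomposition into minimal-support sign vectors, which for nonnegative sign vectors stays nonnegative) propagates the identity to all of $\sign(S^\perp)_\zp$ and $\sign(\tilde S^\perp)_\zp$. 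To repair your outline you must make condition (ii) do real work in the reverse inclusion; the minimality-plus-squeeze device is the cleanest way to do so.
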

\begin{proof}
Recall that, by the proof of Proposition~\ref{cc_bij}, (cc) implies (ii); analogously, (cc') implies 
\begin{itemize}
\item[(ii')]
for every nonzero $\tau \in \sign(S^\perp)_\zp$, 
there is a nonzero $\tilde \tau \in \sign(\tilde S^\perp)_\zp$ such that $\tilde \tau \le \tau$.
\end{itemize}

On the one hand,
let $\tau \in \sign(S^\perp)_\zp$ have minimal support.
By (ii'), there is a nonzero $\tilde \tau \in \sign(\tilde S^\perp)_\zp$ such that $\tilde \tau \le \tau$.
By (ii), there is a nonzero $\tau' \in \sign(S^\perp)_\zp$
such that $\tau' \le \tilde \tau$.
Altogether, 
$\tau' \le \tilde \tau \le \tau$.
Now, $\tau' = \tau$,
since $\tau$ has minimal support,
and hence $\tilde \tau = \tau$.
That is, 
there is a {\em unique} nonzero $\tilde \tau \in \sign(\tilde S^\perp)_\zp$ (namely $\tilde \tau = \tau$) such that $\tilde \tau \le \tau$.
In particular, $\tilde \tau$ has minimal support.

On the other hand,
let $\tilde \tau \in \sign(\tilde S^\perp)_\zp$ have minimal support.
By an analogous argument,
there is a {\em unique} nonzero $\tau \in \sign(S^\perp)_\zp$ (namely $\tau = \tilde \tau$) such that $\tau \le \tilde \tau$.
In particular, $\tilde \tau$ has minimal support.
Hence, elements of $\sign(S^\perp)_\zp$ and $\sign(\tilde S^\perp)_\zp$ with minimal support are in one-to-one correspondence.
Finally,
every nonzero, nonnegative sign vector of a subspace is the composition of nonnegative sign vectors with minimal support, cf.~Theorem~\ref{conformal} in Appendix~\ref{sv}.
Hence, $\sign(S^\perp)_\zp = \sign(\tilde S^\perp)_\zp$.
\end{proof}

It remains to study the robustness of condition (ii).

\begin{lem} \label{ii}
If, for all small perturbations $S_\eps$, % (in the Grassmannian),
the map $\Fc$ is surjective and condition (ii) in Theorem~\ref{bij} holds, 
then either $C = \tilde C = \R^d$ or $\pp \in \sign(S^\perp) \cap \sign(\tilde S^\perp)$. % (and hence $C$ is pointed).
\end{lem}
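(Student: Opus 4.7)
The plan is to split into two cases according to whether $C = \R^d$. If $C = \R^d$, then $\sign(S^\perp)_\zp = \{0\}$ is an open condition in the Grassmannian, preserved under all sufficiently small perturbations $S_\eps$. For any such $S_\eps$, condition (ii) in the hypothesis forces $\sign(\tilde S^\perp)_\zp = \{0\}$ (any nonzero $\tilde \tau$ would demand a nonzero $\tau \in \sign(S_\eps^\perp)_\zp = \{0\}$), hence $\tilde C = \R^d$, giving the first alternative of the conclusion.

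For the remaining case $C \neq \R^d$, the plan is to prove $\pp \in \sign(S^\perp)$ by contradiction; since $\pp \in \sign(T)$ is an open condition, this would persist to all sufficiently small perturbations, so surjectivity (from the hypothesis) and Proposition~\ref{pro:pointed} would yield $\pp \in \sign(\tilde S^\perp)$, finishing the proof. The key tool will be the duality $\pp \in \sign(T) \iff \sign(T^\perp)_\zp = \{0\}$ for any subspace $T \subseteq \R^n$, a consequence of the theorem of the alternative (Appendix~\ref{ta}). So assume $\pp \notin \sign(S^\perp)$. I will construct two small perturbations of $S$ leading to incompatible conclusions about $\sign(\tilde S^\perp)$.

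First, since $C \neq \R^d$, there is a nonzero $y \in \R^d$ with $W^\trans y \ge 0$. Define $W_\eps^{(1)}$ by $w_\eps^i = w^i + \eps\, y/|y|^2$ for indices $i$ with $y \cdot w^i = 0$ and $w_\eps^i = w^i$ otherwise; then $(W_\eps^{(1)})^\trans y > 0$ componentwise, so $\pp \in \sign((S_\eps^{(1)})^\perp)$. By surjectivity of the associated map (hypothesis) and Proposition~\ref{pro:pointed}, this forces $\pp \in \sign(\tilde S^\perp)$. Second, by the duality, $\pp \notin \sign(S^\perp)$ gives a nonzero $c \in S \cap \R^n_{\ge 0}$; moreover $\pp \notin \sign(S)$ (since $C \neq \R^d$), so the index set $I = \{ i : c_i = 0\}$ is nonempty. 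Set $c_\eps = c + \eps \, \mathbf{1}_I$, which is strictly positive; extend $c$ to a basis $\{c, v_2, \ldots, v_{n-d}\}$ of $S$, and define $S_\eps^{(2)} = \Span(c_\eps, v_2, \ldots, v_{n-d})$, a small perturbation of $S$ containing $c_\eps > 0$. Then $\pp \in \sign(S_\eps^{(2)})$, so by duality $\sign((S_\eps^{(2)})^\perp)_\zp = \{0\}$, i.e., $C_\eps^{(2)} = \R^d$. Condition (ii) for $S_\eps^{(2)}$ and $\tilde S$ now forces $\sign(\tilde S^\perp)_\zp = \{0\}$, contradicting $\pp \in \sign(\tilde S^\perp)$ obtained above.

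The main technical obstacle is to verify rigorously that both $S_\eps^{(1)}$ and $S_\eps^{(2)}$ lie in every prescribed Grassmannian neighborhood of $S$; for $S_\eps^{(1)}$ this follows from the $O(\eps)$ matrix modification (with full rank preserved as an open condition), and for $S_\eps^{(2)}$ from continuity of the spanning subspace together with the fact that the new spanning tuple remains linearly independent for small $\eps$ (again by openness).
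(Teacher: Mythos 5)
Your proof is correct and follows essentially the same strategy as the paper's: in the nontrivial case it plays two small perturbations of $S$ against each other --- one with $\pp \in \sign(S_\eps^\perp)$, which via surjectivity and Proposition~\ref{pro:pointed} forces $\pp \in \sign(\tilde S^\perp)$, and one with $C_\eps = \R^d$, which via condition (ii) forces $\tilde C = \R^d$ --- arriving at the same contradiction. The only differences are cosmetic: your explicit constructions of the two perturbations (pushing generators off a supporting hyperplane, respectively fattening a nonnegative kernel vector into a positive one) replace the paper's footnote construction via the lineality space.
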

\begin{proof}
If neither $C=\R^d$ nor $\pp \in \sign(S^\perp)$, then $C$ has a nontrivial lineality space.
On the one hand, 
there is a small perturbation $S_{\eps_1}$ such that%
\footnote{
Let $L \subset [n]$ be the indices of the vectors $w^i$ in the lineality space and $I = [n] \setminus L$.
Hence, there are $c_i>0$ for $i \in L$ such that $\sum_{i \in L} c_i w^i = 0$ and $\sum_{i \in L} c_i = 1$.
Consider small perturbations $S_\eps$ as follows: 
$w^i_\eps = w^i$ for $i \in I$ and $w^i_\eps = w^i - \eps \sum_{j \in I} w^j$ for $i \in L$, where $\eps>0$.
Then, $\sum_{i \in L} c_i w_\eps^i + \sum_{i \in I} \eps \, w_\eps^i = 0$,
and hence $\pp \in \sign(\ker W_\eps) = \sign(S_\eps)$, that is, $C_\eps=\R^d$.}
%Let $L \subset [n]$ be the indices of the vectors $w^i$ in the lineality space and $I = [n] \setminus L$.
%Consider small perturbations $S_\eps$ as follows: 
%$w^i_\eps = w^i$ for $i \in I$ and $w^i_\eps = w^i - \eps \sum_{i \in I} w^i$ for $i \in L$, where $\eps>0$.
%Clearly, $\tau = \sign(W^\trans x) \in \sign(S^\perp)$ and $\tau_\eps = \sign(W_\eps^\trans x) \in \sign(S_\eps^\perp)$
%imply $\tau_\eps \ge \tau$,
%in particular, $\tau \notin \sign(S^\perp)_\zp$ implies $\tau_\eps \notin \sign(S_\eps^\perp)_\zp$.
%Now, consider a nonzero $\tau \in \sign(S^\perp)_\zp$, 
%that is, $\tau_i\ge0$ for $i \in I$ (with $\tau_i=+$ for some $i \in I$) and $\tau_i=0$ for $i \in L$.
%Then,  $(\tau_\eps)_i = \tau_i \ge 0$ for $i \in I$
%and $(\tau_\eps)_i = -$ for $i \in L$, 
%since $w_\eps^i \cdot x = w^i \cdot x - \bar\eps \sum_{i \in I} w^i \cdot x < 0$,
%that is, $\tau_\eps \notin \sign(S_\eps^\perp)_\zp$.
%Hence $\sign(S_\eps^\perp)_\zp = \{0\}$, that is, $C_\eps=\R^d$.}
$C_{\eps_1} = \R^d$;
hence $\tilde C = \R^d$, by (ii).
On the other hand, 
there is a small perturbation $S_{\eps_2}$ such that $\pp \in \sign(S_{\eps_2}^\perp)$;
hence $\pp \in \sign(\tilde S^\perp)$, by Proposition~\ref{pro:pointed}.
A contradiction.
\par
If $C = \R^d$, then $\tilde C = \R^d$, by~(ii).
If $\pp \in \sign(S^\perp)$, then $\pp \in \sign(\tilde S^\perp)$, by Proposition~\ref{pro:pointed}.
\end{proof}

That is, condition (ii) is robust only in two extreme cases regarding the geometry of $C = \cone(W)$.
We consider the case $\pp \in \sign(S^\perp)$ % (and hence $C$ is pointed) 
separately.

We call $C$ {\em robustly generated} if either $d=1$
or, on every extreme ray of $C$, there lies a unique vector $w^i$,
and all other vectors lie in the interior.
In terms of sign vectors, $C$ is robustly generated if
\begin{itemize}
\item[]
%a nonzero $\tau \in \sign(S^\perp)_\zp \setminus \{\pp\}$ has maximal support if and only if $|\tau^0| = 1$.
%either $d=1$ or
a nonzero $\tau \in \sign(S^\perp)_\zp$ has minimal support if and only if,
for every $i \in \tau^0$, there exists $\hat\tau \in \sign(S^\perp)_\zp$ with $\hat\tau^0 = \{i\}$. % (and maximal support)
\end{itemize}
In this case,
$\sign(S_\eps^\perp)_\zp = \sign(S^\perp)_\zp$ for all small perturbations $S_\eps$,
and condition (ii) is robust.
In fact, (ii) being robust implies $C$ being robustly generated.

\begin{lem} \label{ii_pointed}
Let $\pp \in \sign(S^\perp)$ and $\sign(S^\perp)_\zp = \sign(\tilde S^\perp)_\zp$. % (and hence $C$ is pointed).
If condition (ii) in Theorem~\ref{bij} holds
for all small perturbations $S_\eps$, % (in the Grassmannian),
then
$C$ and $\tilde C$ are robustly generated.
\end{lem}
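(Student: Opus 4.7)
My plan is to prove the lemma by contrapositive: assuming $C$ is not robustly generated, I will construct a small perturbation $S_\eps$ of $S$ that violates condition~(ii) of Theorem~\ref{bij}, contradicting the hypothesis. First, since $\sign(S^\perp)_\zp = \sign(\tilde S^\perp)_\zp$, the face lattices of $C$ and $\tilde C$ coincide combinatorially via the index sets $\tau^0$: for every $i \in [n]$, $w^i \in C^\circ$ iff $\tilde w^i \in \tilde C^\circ$, and $w^i$ alone generates an extreme ray of $C$ iff $\tilde w^i$ alone generates an extreme ray of $\tilde C$. Hence $C$ is robustly generated iff $\tilde C$ is, and it suffices to prove the conclusion for $C$.

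Assume $d \ge 2$ (the case $d = 1$ being covered by the definition) and that $C$ is not robustly generated. Then there exists $i \in [n]$ with $w^i \in \bd C$ such that either (A) another $w^j$ with $j \neq i$ lies on the extreme ray of $C$ through $w^i$, or (B) $w^i$ lies in the relative interior of a face of $C$ of dimension at least two. In both cases, $w^i$ is not the sole generator of any extreme ray of $C$, whence $\cone(\{w^k : k \neq i\}) = C$. Choose $u \in C^\circ$, set $w^i_\eps := w^i + \eps u$, and let $W_\eps$ be obtained from $W$ by replacing column~$i$ with $w^i_\eps$; put $S_\eps := \ker W_\eps$. For small $\eps > 0$, $W_\eps$ has full rank, so $S_\eps$ is a Grassmannian perturbation of $S$; moreover $w^i_\eps \in C$ yields $C_\eps := \cone W_\eps = C$, and $u \in C^\circ$ yields $w^i_\eps \in C^\circ = C_\eps^\circ$.

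Let $F$ denote the minimal face of $C$ containing $w^i$ (which is proper since $w^i \in \bd C$), and let $\tau_F \in \sign(S^\perp)_\zp = \sign(\tilde S^\perp)_\zp$ be its associated sign vector, so $i \in \tau_F^0$. Applying condition~(ii) to $\tilde \tau := \tau_F$ with $S_\eps$ in place of $S$ would demand a nonzero $\tau \in \sign(S_\eps^\perp)_\zp$ with $\tau \le \tilde \tau$; in particular $i \in \tau^0$, which forces $w^i_\eps$ to lie on a proper face of $C_\eps$. This contradicts $w^i_\eps \in C_\eps^\circ$, so (ii) fails for $S_\eps$, completing the proof. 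The main subtlety will be arranging the perturbation so that $C_\eps = C$, so that the original and perturbed notions of interior coincide; this is precisely where the assumption that $w^i$ is not the unique generator of any extreme ray is used essentially and uniformly in both cases (A) and (B).
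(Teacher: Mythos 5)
Your proof is correct and follows essentially the same route as the paper: identify a redundant boundary generator $w^i$ (so that $\cone(\{w^k : k\neq i\})=C$), push it into $C^\circ$ by a small perturbation that leaves $C$ unchanged, and observe that condition~(ii) then fails for the sign vector of a proper face of $\tilde C$ whose zero set contains $i$. The only cosmetic difference is that you test (ii) against the minimal face of $C$ containing $w^i$, whereas the paper uses a maximal proper face (minimal-support covector); both work for the same reason.
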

\begin{proof}
Let $d>1$.
Assume that $C$ is not robustly generated,
and let $f$ be a maximal proper face,
characterized by $\tau \in \sign(S^\perp)_\zp$ with minimal support,
such that $w^j \in f$ %(that is, $\tau_j=0$) 
for some $j \in [n]$, but $w^j$ is not needed to generate $f$.
Further let $\tilde f$ be the corresponding maximal proper face of $\tilde C$,
characterized by $\tilde \tau = \tau \in \sign(\tilde S^\perp)_\zp$ with minimal support.
In particular, $\tau_j = \tilde \tau_j = 0$.

Now, consider a small perturbation $S_\eps$
such that $w_\eps^j \in C^\circ$ and $w_\eps^i = w^i$ for $i \neq j$ (and hence $C_\eps = C$).
Then, $\tau'_j=+$ for all $\tau' \in \sign(S_\eps^\perp)_\zp$,
and there is no $\tau' \in \sign(S_\eps^\perp)_\zp$ with $\tau' \le \tilde \tau$, contradicting~(ii) for $\tilde \tau$.
\end{proof}

Finally, the closure condition~\eqref{cc'} together with sign-vector conditions regarding the geometry of the cones $C$ and $\tilde C$ is equivalent to bijectivity for all small perturbations~$S_\eps$.

\begin{thm} \label{pert_S}
The map $\Fc$ is a diffeomorphism for all $c>0$ and all small perturbations $S_\eps$ % (in the Grassmannian)
if and only if 
$\sign(\tilde S) \subseteq \overline{\sign(S)}$ 
and
\begin{itemize}
\item[]
either $C = \tilde C = \R^d$ \\
or $\pp \in \sign(S^\perp) \cap \sign(\tilde S^\perp)$, %(and hence $C$ and $\tilde C$ are pointed) 
$\sign(S^\perp)_\zp = \sign(\tilde S^\perp)_\zp$, 
and $C$ and $\tilde C$ are robustly generated.
\end{itemize}
\end{thm}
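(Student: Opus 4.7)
The plan is to apply the bijectivity characterization of Theorem~\ref{bij} to the perturbed pair $(S_\eps, \tilde S)$ and to check each of conditions (i), (ii), (iii) there, exploiting the preparatory results of this subsection. The hypothesis naturally splits the analysis into the two cases $C=\tilde C=\R^d$ and $\pp \in \sign(S^\perp) \cap \sign(\tilde S^\perp)$, which are precisely the cases in which condition (ii) is robust under perturbations of $S$.

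For the forward direction, assume that $\Fc$ is a diffeomorphism for all $c>0$ and all small $S_\eps$. Bijectivity implies injectivity for every perturbation, so Corollary~\ref{pert_inj=cc'} gives $\sign(\tilde S) \subseteq \overline{\sign(S)}$. Surjectivity together with condition~(ii) of Theorem~\ref{bij} (for all small $S_\eps$) allows Lemma~\ref{ii} to produce the dichotomy: either $C=\tilde C=\R^d$ or $\pp \in \sign(S^\perp) \cap \sign(\tilde S^\perp)$. In the pointed case, (cc') together with condition~(ii) (applied to the unperturbed $S$) permits Proposition~\ref{pro:fl} to yield $\sign(S^\perp)_\zp = \sign(\tilde S^\perp)_\zp$, and Lemma~\ref{ii_pointed} then concludes that $C$ and $\tilde C$ are robustly generated.

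For the backward direction, Lemma~\ref{pert} combined with the assumed closure condition gives $\sign(\tilde S) \subseteq \overline{\sign(S)} \subseteq \overline{\sign(S_\eps)}$ for every small $S_\eps$, and Proposition~\ref{cc'_i_iii} applied to $(S_\eps, \tilde S)$ then secures conditions (i) and (iii) of Theorem~\ref{bij}. It remains to verify (ii) for the perturbed pair in each case. If $C=\tilde C=\R^d$, then $\pp \in \sign(S)$ is an open property, so $C_\eps=\R^d$ and hence $\sign(S_\eps^\perp)_\zp = \sign(\tilde S^\perp)_\zp = \{0\}$, making (ii) vacuous. If $\pp \in \sign(S^\perp) \cap \sign(\tilde S^\perp)$ holds together with equality of face lattices and robust generation, then $\pp \in \sign(S_\eps^\perp)$ (again an open condition) and the robust-generation property forces $\sign(S_\eps^\perp)_\zp = \sign(S^\perp)_\zp = \sign(\tilde S^\perp)_\zp$, so (ii) for $(S_\eps, \tilde S)$ follows by taking $\tau = \tilde \tau$. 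Theorem~\ref{bij} then delivers bijectivity for all $c>0$ and all small~$S_\eps$.

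The main conceptual difficulty is locating which of the three conditions of Theorem~\ref{bij} remain stable under perturbations of $S$: conditions (i) and (iii) are controlled solely by the closure condition (cc'), whereas (ii) is robust only under the geometric dichotomy of the second hypothesis. This asymmetry, entirely absent from the analogous Theorem~\ref{pert_tildeS} for perturbations of $\tilde S$, is exactly what Lemmas~\ref{ii} and \ref{ii_pointed} together with Proposition~\ref{pro:fl} are designed to capture, so the proof reduces to assembling these ingredients in the correct order.
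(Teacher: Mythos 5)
Your proposal is correct and follows essentially the same route as the paper: both directions are assembled from Corollary~\ref{pert_inj=cc'}, Proposition~\ref{pro:fl}, Lemmas~\ref{ii} and~\ref{ii_pointed} for the forward implication, and Lemma~\ref{pert}, Proposition~\ref{cc'_i_iii}, and the robustness of $\pp \in \sign(S_\eps^\perp)$ and of $\sign(S_\eps^\perp)_\zp$ for the converse. The only cosmetic difference is that in the case $C=\tilde C=\R^d$ you argue via $C_\eps=\R^d$, whereas the paper notes more directly that $\tilde C=\R^d$ already makes condition~(ii) vacuous; both observations are valid.
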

\begin{proof}
By Theorem~\ref{bij}, the simultaneous bijectivity of $\Fc$ for all $c>0$ is equivalent to conditions (i), (ii), and (iii) in Theorem~\ref{bij}.

By Corollary~\ref{pert_inj=cc'}, condition (i), that is, 
$\sign(S_\eps) \cap \sign(\tilde S^\perp) = \{0\}$, for all small perturbations $S_\eps$,
is equivalent to $\sign(\tilde S) \subseteq \overline{\sign(S)}$.

Now assume conditions (i), (ii), and (iii), for all small perturbations $S_\eps$.
By Proposition~\ref{pro:fl}, $\sign(S^\perp)_\zp = \sign(\tilde S^\perp)_\zp$.
By Lemma~\ref{ii},
either $C = \tilde C = \R^d$ or $\pp \in \sign(S^\perp) \cap \sign(\tilde S^\perp)$.
In the latter case, 
by Lemma~\ref{ii_pointed}, $C$ and $\tilde C$ are robustly generated.

Conversely, 
$\tilde C = \R^d$ (that is, $\sign(\tilde S^\perp)_\zp = \{0\}$)
trivially implies condition (ii) for all small perturbations $S_\eps$.
By Lemma~\ref{pert}, $\sign(\tilde S) \subseteq \overline{\sign(S)}$ implies $\sign(\tilde S) \subseteq \overline{\sign(S_\eps)}$ for all small perturbations~$S_\eps$,
and by Proposition~\ref{cc'_i_iii} (for $\tilde S$ and $S_\eps$), this implies condition (iii) for all small perturbations~$S_\eps$.

Finally, $\pp \in \sign(S^\perp)$,  $\sign(S^\perp)_\zp = \sign(\tilde S^\perp)_\zp$, and $C$ being robustly generated
imply $\pp \in \sign(S_\eps^\perp)$ and hence condition (iii), for all small perturbations $S_\eps$.
Further, they imply $\sign(S_\eps^\perp)_\zp = \sign(\tilde S^\perp)_\zp$
and hence condition (ii), for all small perturbations~$S_\eps$.
\end{proof}

%%%%%%%%% %%%%%%%%% %%%%%%%%% %%%%%%%%% %%%%%%%%%

\subsection{General perturbations} \label{sec:general}

Finally, we consider small perturbations of both subspaces, $S$ and $\tilde S$,
corresponding to the coefficients $W$ and the exponents $\tilde W$ in $\Fc$.

The next result relates {\em chirotopes} to {\em cocircuits} (sign vectors of $S^\perp = \im W^\trans$ and $\tilde S^\perp = \im \tilde W^\trans$ with minimal support).

\begin{lem} \label{chirotope_circuit}
Let $S,\tilde S$ be subspaces of $\R^n$ of dimension $n-d$ (with $d \le n$).
For every $W, \tilde W \in \R^{d \times n}$ (with full rank $d$) such that $S = \ker W$ and $\tilde S = \ker \tilde W$,
the following statements are equivalent.
\begin{enumerate}
\item
$\sign(S) = \sign(\tilde S)$, and a nonzero $\tau \in \sign(S^\perp)$ has minimal support if and only if $|\tau^0| = d-1$.
\item
$\det(W_I) \det(\tilde W_I) > 0$ for all subsets $I \subseteq [n]$ of cardinality~$d$
(or `$< 0$' for all $I$).
\end{enumerate}
\end{lem}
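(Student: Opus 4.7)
The plan is to read both conditions as statements about the (oriented) matroid of $W$, namely that it is \emph{uniform} (no maximal minor vanishes) and that the chirotopes of $W$ and $\tilde W$ agree up to a global sign. The lemma then reduces to the basic fact that a uniform oriented matroid is determined by, and determines, its chirotope, together with the already-established Corollary~\ref{inj_det}.

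For the direction (1)~$\Rightarrow$~(2), I first interpret condition~1. The minimal-support sign vectors of $S^\perp = \im W^\trans$ are the cocircuits of the matroid of $W$; their zero sets are the matroid hyperplanes. The requirement that every such zero set have cardinality exactly $d-1$ is equivalent to saying that every $(d-1)$-subset of columns of $W$ is independent, i.e.\ $\det(W_I) \neq 0$ for every $I \subseteq [n]$ of cardinality $d$. Using $\sign(S) = \sign(\tilde S)$ together with the appendix identity $\sign(S^\perp) = \sign(S)^\perp$ (Corollary~\ref{orthogonal}), we get $\sign(S^\perp) = \sign(\tilde S^\perp)$, so the same nonvanishing holds for $\tilde W$. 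Also $\sign(S) \cap \sign(\tilde S^\perp) = \sign(\tilde S) \cap \sign(\tilde S)^\perp = \{0\}$ because a nonzero sign vector is never orthogonal to itself. Invoking Corollary~\ref{inj_det} yields $\det(W_I)\det(\tilde W_I) \ge 0$ for all $I$ (or $\le 0$ for all $I$), and combined with the strict nonvanishing, the inequality is strict.

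For the direction (2)~$\Rightarrow$~(1), the hypothesis immediately gives $\det(W_I) \neq 0$ and $\det(\tilde W_I) \neq 0$ for every $I$ of size $d$, so both matroids are uniform and the minimal-support characterization in condition~1 holds for both subspaces. Corollary~\ref{inj_det} further provides $\sign(S) \cap \sign(\tilde S^\perp) = \{0\}$. It remains to show $\sign(S) = \sign(\tilde S)$. For this I will use the Cramer-style description of circuits: if the matroid of $W$ is uniform, then for each $I \subseteq [n]$ of size $d+1$ the space $\ker W_I$ is one-dimensional and is spanned by the vector whose $j$-th coordinate (for $j \in I$) is $(-1)^j \det(W_{I \setminus \{j\}})$ and is zero outside $I$. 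These are precisely the circuits of $S$; by condition~2 they have the same (or uniformly opposite) sign pattern as the corresponding circuits of $\tilde S$. Since the sign vectors of a linear subspace are exactly the compositions of its circuits (a standard fact from oriented matroid theory, cf.\ Appendix~\ref{sv}), we conclude $\sign(S) = \sign(\tilde S)$.

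The main technical step, and the only non-routine one, is the passage from the equality of chirotopes to the equality of sets of sign vectors in the second direction. The clean way to do this is via the Cramer formula for circuits together with the composition/conformal decomposition theorem for sign vectors of subspaces stated in the appendix. Everything else is bookkeeping using Corollaries~\ref{inj_det}, \ref{inj_cor}, and~\ref{orthogonal}.
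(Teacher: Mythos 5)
Your proof is correct and follows essentially the same route as the paper, which simply invokes the standard chirotope/cocircuit translation (Theorem~\ref{translation}) in one line; you supply the details on the dual side, via the Cramer-rule description of circuits of $\ker W$ together with the conformal-composition theorem and Corollaries~\ref{inj_det} and~\ref{orthogonal}. The only cosmetic quibbles are that the sign in the circuit formula should be $(-1)^k$ with $k$ the position of $j$ within the ordered set $I$, and that uniformity is equivalent to all $d$-subsets (not $(d-1)$-subsets) of columns being independent — but your stated conclusion $\det(W_I)\neq 0$ for all $|I|=d$ is the correct one and is what the argument uses.
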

\begin{proof}
By using the standard chirotope/cocircuit translation for subspaces of~$\R^n$,
see Theorem~\ref{translation} in Appendix~\ref{sv}.
\end{proof}

As it turns out, 
all maximal minors of $W$ and $\tilde W$ being nonzero and having matching signs
is equivalent to bijectivity for all small perturbations $S_\eps$ and $\tilde S_{\tilde \eps}$.

\begin{thm} \label{pert_S_tildeS}
The following statements are equivalent:
\begin{enumerate}
\item
$\Fc$ is a diffeomorphism for all $c>0$ and all small perturbations $S_\eps$ and~$\tilde S_{\tilde \eps}$. % (in the Grassmannian).
\item
$\sign(S) = \sign(\tilde S)$, and a nonzero $\tau \in \sign(S^\perp)$ has minimal support if and only if $|\tau^0| = d-1$.
\item
$\det(W_I) \det(\tilde W_I) > 0$ for all subsets $I \subseteq [n]$ of cardinality~$d$
(or `$< 0$' for all $I$).
\end{enumerate}
\end{thm}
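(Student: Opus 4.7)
The equivalence $2 \Leftrightarrow 3$ is exactly Lemma~\ref{chirotope_circuit}, so I focus on $1 \Leftrightarrow 3$. Both directions will be handled by combining the corollary after Theorem~\ref{pert_tildeS} (which characterizes robustness under perturbations of $\tilde S$ alone in terms of signed maximal minors) with Corollary~\ref{cor:bij} (the Birch-type sufficiency $\sign(S)=\sign(\tilde S)$).

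For $3 \Rightarrow 1$: assume $\det(W_I)\det(\tilde W_I) > 0$ for every $I$ of cardinality $d$ (the `$<0$' alternative is handled identically). Then every maximal minor of $W$ and of $\tilde W$ is nonzero, so any sufficiently small perturbations $W_\eps$ and $\tilde W_{\tilde \eps}$ preserve the sign of every minor. Hence $\det(W_{\eps,I})\det(\tilde W_{\tilde \eps,I}) > 0$ for every $I$, and Lemma~\ref{chirotope_circuit} applied to $(S_\eps, \tilde S_{\tilde \eps})$ gives $\sign(S_\eps) = \sign(\tilde S_{\tilde \eps})$. Corollary~\ref{cor:bij} then delivers that $\Fc$ is a diffeomorphism for all $c>0$.

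For $1 \Rightarrow 3$: fix any small perturbation $S_\eps$. Statement $1$ guarantees that $\Fc$ built from $(S_\eps, \tilde S_{\tilde \eps})$ is a diffeomorphism for all $c>0$ and all small $\tilde S_{\tilde \eps}$. The corollary after Theorem~\ref{pert_tildeS}, applied with $W_\eps$ in place of $W$, therefore yields
\[
\det(W_{\eps,I}) \neq 0 \ \Longrightarrow\ \det(W_{\eps,I})\det(\tilde W_I) > 0 \text{ for all } I,
\]
or the analogous `$<0$' statement. Choose $S_\eps$ generic so that every $\det(W_{\eps,I})$ is nonzero (a dense open condition on the Grassmannian). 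Then a uniform sign $\sigma \in \{+,-\}$ exists with $\sigma \det(W_{\eps,I})\det(\tilde W_I) > 0$ for every $I$; in particular every $\det(\tilde W_I)$ is nonzero.

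To promote this to $W$ itself, I argue by contradiction: suppose $\det(W_{I_0}) = 0$ for some $I_0$. Since $W$ has full rank $d$, there is some $I_1$ with $\det(W_{I_1}) \neq 0$, and for every sufficiently small generic $\eps$ the sign of $\det(W_{\eps,I_1})$ equals that of $\det(W_{I_1})$. Thus the global sign $\sigma$ is pinned by the $I_1$ equation and is the same for every small generic perturbation. Now pick two generic perturbations $S_{\eps_+}, S_{\eps_-}$ with $\det(W_{\eps_+, I_0})$ and $\det(W_{\eps_-, I_0})$ of opposite signs, which is possible because $\det(W_I)$ is a nontrivial polynomial in the local Grassmannian coordinates and hence takes both signs in every neighborhood of its zero locus (assuming $n>d$; the case $n=d$ is trivial). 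Applying the displayed implication with the common $\sigma$ to both perturbations forces $\det(\tilde W_{I_0})$ to have two opposite signs, which is impossible. Hence every $\det(W_I)$ is nonzero, and the sign relation $\sigma \det(W_{\eps,I})\det(\tilde W_I) > 0$, now stable under all sufficiently small perturbations, passes to $\eps=0$ to give statement~$3$.

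The step I expect to be most delicate is the last one: keeping the alternative between `$>0$' and `$<0$' consistent as we compare different small perturbations. The key is the full-rank hypothesis on $W$, which furnishes at least one non-vanishing witness minor pinning $\sigma$ uniformly on a whole neighborhood of generic subspaces.
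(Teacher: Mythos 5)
Your proof is correct, and two of its three pieces match the paper: the equivalence $2 \Leftrightarrow 3$ is Lemma~\ref{chirotope_circuit} in both, and your $3 \Rightarrow 1$ is essentially the paper's (the paper passes from $\sign(S_\eps)=\sign(\tilde S_{\tilde\eps})$ to $\sign(S_\eps)\subseteq\overline{\sign(\tilde S_{\tilde\eps})}$ and cites Proposition~\ref{cc_bij}, whereas you cite Corollary~\ref{cor:bij} directly; immaterial). Where you genuinely diverge is $1 \Rightarrow 3$. The paper extracts only \emph{injectivity} from statement~1, applies Corollary~\ref{inj_det} to every pair of perturbations $(W_\eps,\tilde W_{\tilde\eps})$ simultaneously, and finishes with a short perturbation argument on the minors (the same device as in the proof of Proposition~\ref{cc_det}): perturbing \emph{both} matrices at once lets any vanishing minor on either side be flipped to either sign, which immediately contradicts the uniform sign alternative. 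You instead fix a generic perturbation of $S$ alone, invoke the one-sided robustness corollary after Theorem~\ref{pert_tildeS} for the pair $(S_\eps,\tilde S)$, and then run a two-perturbation contradiction to transport nonvanishing of the minors back to $W$ itself. Both routes work; the paper's is shorter precisely because it perturbs both subspaces at once, while yours needs the extra bookkeeping of pinning the global sign $\sigma$ by a witness minor. One small imprecision in your argument: the assertion that a nontrivial polynomial ``takes both signs in every neighborhood of its zero locus'' is false in general (consider $x^2$). What you actually need, and what is true, is that a singular $d\times d$ matrix is a limit of matrices with positive determinant and of matrices with negative determinant (e.g.\ by replacing the zero singular values in an SVD by $\pm\eps$), and that such perturbations of the submatrix $W_{I_0}$ can be realized by full-rank perturbations of $W$, hence of $S$ in the Grassmannian, while remaining generic. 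With that repair the step is sound.
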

\begin{proof} 
(1 $\Rightarrow$ 3):
Statement~1 implies the injectivity of $\Fc$ for all $c>0$, that is, $\sign(S_\eps) \cap \sign(\tilde S_{\tilde \eps}^\perp) = \{0\}$,
for all small perturbations $S_\eps$, $\tilde S_{\tilde \eps}$.
By Corollary~\ref{inj_det}, this is equivalent to
\begin{itemize}
\item[]
$\det(W_{\eps,I}) \det(\tilde W_{\tilde \eps,I}) \ge 0$ for all $I \subseteq [n]$ of cardinality $d$ (or `$\le 0$' for all $I$)
and $\det(W_{\eps,I}) \det(\tilde W_{\tilde \eps,I}) \neq 0$ for some~$I$, \\
for all small perturbations $W_\eps$ of $W$ and $\tilde W_{\tilde \eps}$ of $\tilde W$. 
\end{itemize}
This is equivalent to statement~3.

(3 $\Rightarrow$ 1):
Statement~3 implies
\begin{itemize}
\item[]
$\det(W_{\eps,I}) \det(\tilde W_{\tilde \eps,I}) > 0$ for all $I \subseteq [n]$ of cardinality $d$ (or `$< 0$' for all $I$), \\
for all small perturbations $W_\eps$, $\tilde W_{\tilde \eps}$. 
\end{itemize}
By Lemma~\ref{chirotope_circuit}, this implies $\sign(S_\eps) = \sign(\tilde S_{\tilde \eps})$ and hence $\sign(S_\eps) \subseteq \overline{\sign(\tilde S_{\tilde \eps})}$,
for all small perturbations $W_\eps$, $\tilde W_{\tilde \eps}$. 
By Proposition~\ref{cc_bij}, this implies statement~1.

(2 $\Leftrightarrow$ 3): By Lemma~\ref{chirotope_circuit}.
\end{proof}

By Theorem~\ref{pert_S}, 
bijectivity for all $c>0$ and all small perturbations $S_\eps$ already implies that either $C=\tilde C=\R^d$ or $\pp \in \sign(S^\perp) \cap \sign(\tilde S^\perp)$.
In Theorem~\ref{pert_S_tildeS},
this follows from the second part of condition 2.
Assume that $C$ has a nontrivial lineality space of dimension $\ell$, generated by at least $\ell+1$ vectors~$w^i$.
Then, a maximal proper face, having dimension $d-1 = \ell+d'$, is generated by at least $(\ell+1)+d'=d$ vectors
and corresponds to a sign vector $\tau \in \sign(S^\perp)$ with minimal support, but $|\tau^0| \ge d$.

%%%%%%%%% %%%%%%%%% %%%%%%%%% %%%%%%%%% %%%%%%%%%

\section{Applications to Chemical Reaction Networks} \label{sec:appl}

As mentioned in the introduction, 
our work is motivated by the study of chemical reaction networks with generalized mass-action kinetics.
We present a derivation of our main problem (the characterization of bijectivity of families of exponential maps)
and applications of our main results,
in particular, Theorems~\ref{bij} and \ref{pert_tildeS}.

We start with an introduction to chemical reaction networks (with mass-action kinetics).
Thereby, we follow the graph-based approach introduced in \cite{mueller:regensburger:2014}; 
see also \cite{CMPY2018,JohnstonMuellerPantea2018}.

Consider the chemical reaction 
$
1 \ce{A} + 1 \ce{B}  \to \ce{C}
$
(with {\em stoichiometric coefficients} equal to 1).
Under the assumption of mass-action kinetics (MAK),
the reaction rate is given by 
$
v = k \, x_\ce{A}^1 x_\ce{B}^1
$
(with {\em kinetic orders} equal to 1),
where $k>0$ is the {\em rate constant}
and $x_\ce{A}, x_\ce{B} \ge 0$ are the concentrations of the chemical species $\ce{A},\ce{B}$.
Most importantly, the stoichiometric coefficients determine the kinetic orders.
Given $n$ species,
a general reaction
is written as
$
y \to y' ,
$
where $y,y' \in \R^n_{\ge0}$ are called (educt and product) {\em complexes},
and its rate is given by
$
v = k \, x^y ,
$
where $x^y = \prod_{i=1}^n {x_i}^{y_i}$ is a monomial in the species concentrations $x \in \R^n_{\ge0}$.
In a network,
an individual reaction $y \to y'$ contributes to the ODE for the species concentrations as
$
\dd{x}{t} = k \, x^y (y' - y) + \ldots \, .
$
Let $x=(x_\ce{A},x_\ce{B},x_\ce{C},x_\ce{D},\ldots)^\trans$.
For the reaction $\ce{A} + \ce{B}  \to \ce{C}$ above, one has
$y=(1,1,0,0,\ldots)^\trans$, $y'=(0,0,1,0,\ldots)^\trans$
and hence
$x^y=x_\ce{A}x_\ce{B}$, $y'-y = (-1,-1,1,0,\ldots)^\trans$. % and
%\[
%\dd{}{t} \begin{pmatrix} x_\ce{A} \\ x_\ce{B} \\ x_\ce{C} \\ x_\ce{D} \\ \vdots \end{pmatrix}
%= k \, x_\ce{A}x_\ce{B}
%\begin{pmatrix} -1 \\ -1 \\ 1 \\ 0 \\ \vdots \end{pmatrix}
%+ \ldots \, .
%\]

A {\em chemical reaction network} (CRN) is based on a directed graph $G=(V,E)$.
Every vertex $i \in V=\{1,\ldots,m\}$ is labeled with a complex $y(i) \in \R^n_{\ge0}$, 
and every edge $i \to i' \in E$ (representing a reaction) is labeled with a rate constant $k_{i \to i'} > 0$.
From the labeled digraph, one obtains the ODE for the species concentrations, 
\[
\dd{x}{t} = \sum_{i \to i' \in E} k_{i \to i'} \, x^{y(i)} \big(y(i')-y(i)\big) .
\]
The sum ranges over all reactions, and every summand is a product of the reaction rate and the difference of product and educt complexes.
The right-hand-side can be decomposed into stoichiometric and graphical contributions,
\[
\dd{x}{t} = Y I_E \, v_k(x) = Y A_k \, x^Y ,
\]
where $Y \in \R^{n \times V}_{\ge0}$ is the matrix of complexes,
$I_E \in \R^{V \times E}$ is the incidence matrix, 
and $A_k \in \R^{V \times V}$ is the Laplacian matrix of the digraph $G$,
labeled with the rate constants $k \in \R^E_{>0}$.
The vector of reaction rates $v_k(x) \in \R^{E}_\ge$ is defined via $(v_k(x))_{i \to i'} = k_{i \to i'} \, x^{y(i)}$,
and the vector of monomials $x^Y \in \R^V_{\ge0}$ is defined via $(x^Y)_i = x^{y(i)}$,
where $y(i)$ is the $i$-th column of $Y$.

A positive steady state $x \in \R^n_{>0}$ of the ODE
that fulfills
\[
A_k \, x^Y = 0
\]
is called a {\em complex-balanced equilibrium}. % (CBE).
Another important object is the {\em stoichiometric subspace} 
\[
S = \im (Y I_E) .
\]
Clearly, $\dd{x}{t} \in S$, and hence $x(t) \in x(0)+S$.
For $x' \in \R^n_{\ge0}$, the set $(x'+S) \cap \R^n_{\ge0}$ is called a {\em stoichiometric class}.
%
%==============================
%
The {\em deficiency} of a CRN is given by
\[
\delta 
= \dim(\ker Y \cap \im I_E) 
= m - \ell - \dim(S) ,
\]
where $m$ is the number of vertices, 
and $\ell$ is the number of connected components of the digraph.
Finally, 
a CRN is called {\em weakly reversible} if all components of the digraph are strongly connected.

Now, we can state the celebrated deficiency zero theorem for MAK, 
formulated by Horn, Jackson, and Feinberg in 1972.

\begin{thm}[$\delta=0$ theorem; cf.~\cite{HornJackson1972}, \cite{Horn1972}, and \cite{Feinberg1972}] 
For a CRN with MAK, 
there exists a unique (complex-balanced, asymptotically stable) equilibrium 
in every stoichiometric class and for all rate constants
if and only if 
$\delta=0$ and the network is weakly reversible.
\end{thm}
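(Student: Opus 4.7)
The plan is to derive this classical theorem from Birch's theorem (Theorem~\ref{fulton}) for the sufficient direction and from standard properties of the graph Laplacian $A_k$ for the necessary direction. The key observation is that, under weak reversibility, the set of positive complex-balanced equilibria (whenever it is nonempty) equals the exponential manifold $Z_k = \{x^\ast\circ\e^w : w\in S^\perp\}$ through any reference equilibrium $x^\ast$, and intersecting $Z_k$ with a stoichiometric class is precisely the Birch bijectivity problem with $W=\tilde W$.

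\emph{Forward direction.} Assume weak reversibility and $\delta=0$. By the matrix-tree theorem applied to each of the $\ell$ strongly connected components, $\ker A_k$ has a basis of componentwise-positive vectors $\psi^1(k),\ldots,\psi^\ell(k)$. A positive $x$ is complex-balanced iff $x^Y = \sum_j c_j\,\psi^j(k)$ for some $c\in\R^\ell_{>0}$, which in logarithms is the linear system $Y^\trans\ln x - L\ln c = \ln\psi$, where $L\in\R^{V\times\ell}$ is the linkage-class indicator matrix. Using $(\im Y^\trans + \im L)^\perp = \ker Y\cap \ker L^\trans = \ker Y\cap \im I_E$, which has dimension $\delta$, the system is solvable whenever $\delta=0$, giving a positive complex-balanced equilibrium $x^\ast(k)$. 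A direct linkage-class computation then shows that $x>0$ is complex-balanced iff $\ln x - \ln x^\ast \in S^\perp$, yielding the characterization $Z_k = \{x^\ast\circ\e^w : w\in S^\perp\}$. Now pick any $B\in\R^{(n-\dim S)\times n}$ of full rank with $\ker B = S$, and set $W=\tilde W = B$, $c = x^\ast$. Membership of $x\in Z_k$ in the stoichiometric class $(x_0+S)\cap\R^n_{>0}$ becomes $F_c(y) = Bx_0$ for some $y\in\R^{n-\dim S}$, and $Bx_0\in(\cone B)^\circ$ since $x_0>0$ writes $Bx_0$ as a strictly positive combination of the columns of $B$. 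Theorem~\ref{fulton} then provides a unique such $y$, hence a unique equilibrium per class. Asymptotic stability follows from the classical Horn--Jackson Lyapunov function $V(x) = \sum_i \bigl(x_i\ln(x_i/x^\ast_i) - x_i + x^\ast_i\bigr)$, whose negativity along trajectories is a standard consequence of the complex-balanced condition.

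\emph{Converse and main obstacle.} Suppose uniqueness in every class holds for all $k>0$. If the network were not weakly reversible, then some vertex $i$ in a non-terminal strong component has $(A_k\psi)_i < 0$ for every $\psi>0$ and every $k>0$, so $\ker A_k\cap\R^V_{>0}=\emptyset$ and no complex-balanced equilibrium can exist, contradicting existence. If instead $\delta>0$ with weak reversibility, then $\im Y^\trans + \im L$ is a proper subspace of $\R^V$; one must produce a $k$ for which $\ln\psi(k)$ lies outside this subspace, so that the solvability condition for $x^\ast(k)$ fails and existence is destroyed. This non-containment is the main obstacle: it requires an algebraic-independence argument on the tree polynomials $\psi^j_i(k)$ furnished by the matrix-tree theorem, to certify that the semialgebraic image $\{\ln\psi(k):k\in\R^E_{>0}\}$ cannot be confined to any proper affine subspace of $\R^V$. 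For digraphs with symmetries that constrain the tree polynomials, one falls back on the sign-vector criteria of Theorem~\ref{bij} together with Proposition~\ref{pro:pointed} to certify that simultaneous bijectivity of the family $F_c$ already forces $\delta=0$.
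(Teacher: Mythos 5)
The paper does not actually prove this theorem: it is stated as the classical result of Horn, Jackson, and Feinberg, with citations, and serves only as the template that Theorems~\ref{thm:generalized} and~\ref{thm:generalizedrobust} generalize. So your attempt must be judged against the standard literature proof. Your forward direction is essentially that proof, and it is sound: the matrix--tree theorem gives a positive basis of $\ker A_k$ under weak reversibility, the solvability of $Y^\trans \ln x - L \ln c = \ln\psi$ is governed by $(\im Y^\trans + \im L)^\perp = \ker Y \cap \im I_E$, whose dimension is $\delta$, the parametrization $Z_k = x^*\circ\e^{S^\perp}$ reduces intersection with a stoichiometric class to Birch's theorem (Theorem~\ref{fulton}) with $W=\tilde W$, and stability comes from the Horn--Jackson entropy function. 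This is also exactly the reduction the paper carries out in Section~\ref{sec:appl} for the GMAK case.

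The genuine gap is in the converse, and you have named it yourself without closing it. To show that $\delta>0$ (with weak reversibility) destroys existence for some $k$, you must exhibit rate constants with $\ln\psi(k)\notin \im Y^\trans + \im L$. Asserting that this ``requires an algebraic-independence argument on the tree polynomials'' is a statement of the difficulty, not a proof; this is precisely the content of Horn's 1972 necessary-and-sufficient conditions for complex balancing, where the failure set is cut out by $\delta$ independent conditions on $k$, and it needs an actual argument (e.g., that the map $k\mapsto\ln\psi(k)$ has image not contained in any proper affine subspace, which one proves by computing its differential or by Feinberg's explicit construction). Your proposed fallback is moreover a category error: Theorem~\ref{bij} and Proposition~\ref{pro:pointed} characterize when the family $\Fc$ is \emph{bijective}, i.e., they govern uniqueness of the intersection $Z_k\cap(x'+S)$ \emph{given} that $Z_k\neq\emptyset$; they say nothing about whether a complex-balanced equilibrium exists for all $k$, which is the $\tilde\delta$-type existence question settled by Theorem~\ref{thm:dt1} (and for MAK $\tilde\delta=\delta$). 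A minor additional imprecision: in the non-weakly-reversible case it is the \emph{sum} of $(A_k\psi)_i$ over a source strong component with outgoing edges that is negative for $\psi>0$; a single distinguished vertex with $(A_k\psi)_i<0$ for \emph{every} $\psi>0$ need not exist, though the conclusion $\ker A_k\cap\R^V_{>0}=\emptyset$ is correct.
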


The $\delta=0$ theorem is a strong result. 
It characterizes CRNs with MAK that are dynamically as simple and stable as possible. 
However, MAK is an assumption that holds for elementary reactions in homogeneous and dilute solutions.
In intracellular environments, which are highly structured and crowded, 
and for reaction mechanisms,
more general kinetics are needed.
As a prominent approach,
biochemical systems theory \cite{Savageau1969,Voit2013}
proposes power laws in the species concentrations,
where the kinetic orders may differ from the stoichiometric coefficients.
In chemical reaction network theory,
power-law kinetics has been termed general(ized) mass-action kinetics (GMAK)~\cite{HornJackson1972,mueller:regensburger:2012,mueller:regensburger:2014}.
As already noted by Horn and Jackson~\cite{HornJackson1972}, every CRN with GMAK can be written as another CRN with MAK,
where the stoichiometric coefficients need not be integers.
However, the resulting network typically loses desired properties such as weak reversibility and zero deficiency.
In our more recent definition of CRNs with GMAK~\cite{mueller:regensburger:2012,mueller:regensburger:2014},
we allow for power-law kinetics, without having to rewrite the network.

%This was one motivation to study CRNs with {\em generalized} mass-action kinetics (GMAK),
%as defined in previous work \cite{mueller:regensburger:2012,mueller:regensburger:2014}.
%Another motivation was that a CRN with MAK may not have zero deficiency and may not be weakly reversible, 
%however, there may be a dynamically equivalent CRN with GMAK that has the desired properties.
In fact,
a CRN with MAK may not have zero deficiency and may not be weakly reversible,
but there may be a dynamically equivalent CRN with GMAK that has the desired properties.
In particular,
dynamical equivalence to a network having zero `effective' and `kinetic' deficiencies
allows a parametrization of all positive equilibria~\cite{JohnstonMuellerPantea2018}.
Such a parametrization can be computed by linear algebra techniques and does not require tools from algebraic
geometry such as Gr\"obner bases,
as demonstrated for the EnvZ-OmpR and shuttled WNT signaling pathways.
For algorithmic methods to identify dynamically equivalent CRNs and further applications to biochemical networks,
see~\cite{Johnston2014,Johnston2015,TonelloJohnston2018,JohnstonBurton2018}. 

Relations between biochemical systems theory and chemical reaction network theory are discussed in~\cite{ArceoJoseMarin-SanguinoMendoza2015,ArceoJoseLaoMendoza2017,TalabisArceoMendoza2018}.
Power-law systems from biochemical systems theory can be realized as CRNs with GMAK having desired properties, 
and results e.g.\ from~\cite{mueller:regensburger:2012,mueller:regensburger:2014} are applied
to models of yeast fermentation, purine metabolism~\cite{ArceoJoseMarin-SanguinoMendoza2015},
and further paradigmatic models from systems biology~\cite{ArceoJoseLaoMendoza2017}.

We continue our introduction to chemical reaction networks (with generalized mass-action kinetics).
For the reaction above,
$
1 \ce{A} + 1 \ce{B} \to \ce{C} ,
$
now under the assumption of GMAK,
the reaction rate is given by
$
v = k \, x_A^a x_B^b ,
$
where the kinetic orders $a,b \in \R$ need not coincide with the stoichiometric coefficients.
% In fact, they need not even be integers, but can be arbitrary (nonnegative) real numbers.
One writes
\[
\ovalbox{$\begin{array}{c} 1\ce{A}+1\ce{B} \\ (a\ce{A}+b\ce{B}) \end{array}$} 
\to 
\ovalbox{$\begin{array}{c} \ce{C} \\ (\ldots) \end{array}$} 
\]
with the kinetic-order information in brackets.
For a general reaction
\[ 
\ovalbox{$\begin{array}{c} y \\ (\tilde y) \end{array}$}
\to 
\ovalbox{$\begin{array}{c} y' \\ (\ldots) \end{array}$} \; ,
\]
one has
\[
v = k \, x^{\tilde y} ,
\]
where $\tilde y \in \R^n$ is called a {\em kinetic(-order) complex}.

As above, a CRN is based on a digraph $G=(V,E)$, 
but now every vertex $i \in V$ is labeled with stoichiometric {\em and} kinetic-order complexes,
$y(i)$ and $\tilde y(i)$, respectively.
(And every edge is labeled with a rate constant.)
From the labeled digraph, one obtains the ODE
\[
\dd{x}{t} = \sum_{i \to i' \in E} k_{i \to i'} \, x^{\tilde y(i)} \big(y(j)-y(i)\big) .
\]

Again the right-hand-side of the ODE can be decomposed,
now into stoichiometric, graphical, and kinetic-order contributions,
\[
\dd{x}{t} = Y A_k \, x^{\tilde{Y}} ,
\]
where $\tilde Y \in \R^{n \times V}_{\ge0}$ is the matrix of kinetic-order complexes.
Accordingly, a steady state $x \in \R^n_{>0}$ that fulfills
\[
A_k \, x^{\tilde{Y}} = 0
\]
is called a complex-balanced equilibrium.
Finally, like the corresponding stoichiometric objects,
one introduces the {\em kinetic-order subspace} 
\[
\tilde S = \im (\tilde Y I_E) 
\]
and the {\em kinetic(-order) deficiency} 
\[
\tilde \delta
= \dim(\ker \tilde Y \cap \im I_E)
= m - \ell - \dim(\tilde S) .
\]

The classical $\delta=0$ theorem holds for MAK.
In previous work, we formulated a first analogue for GMAK.

\begin{thm}[$\tilde \delta=0$ theorem; cf.~\cite{mueller:regensburger:2014}] \label{thm:dt1}
For a CRN with GMAK, 
there exists a complex-balanced equilibrium 
for all rate constants
if and only if 
$\tilde \delta=0$ and the network is weakly reversible.
\end{thm}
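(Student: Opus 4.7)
My approach is to reduce the existence of complex-balanced equilibria to a linear subspace condition on $\R^V$, via the Matrix--Tree theorem and a logarithmic change of coordinates; the kinetic deficiency then appears as an orthogonal-complement condition.

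For sufficiency, assume the network is weakly reversible with $\tilde\delta = 0$. Weak reversibility makes every connected component of $G$ strongly connected, so $A_k$ is block-diagonal along components, and the Matrix--Tree theorem supplies, for each component $V^{(j)}$, a strictly positive kernel vector $K^{(j)}$ supported on $V^{(j)}$ whose entries are polynomials in $k$ with positive coefficients. Hence
\[
\ker A_k \cap \R^V_{>0} = \Bigl\{\sum_{j=1}^\ell c_j K^{(j)} : c_j > 0 \Bigr\},
\]
and a complex-balanced equilibrium exists iff there is $x \in \R^n_{>0}$ and positive scalars $c_j$ with $x^{\tilde Y} = \sum_j c_j K^{(j)}$. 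Taking componentwise logarithms and exploiting the disjoint supports of the $K^{(j)}$, this linearises to the condition
\[
\log K \in \im \tilde Y^\trans + \Span\bigl(\mathbf{1}_{V^{(1)}}, \ldots, \mathbf{1}_{V^{(\ell)}}\bigr),
\]
where $\log K \in \R^V$ has entries $(\log K)_i = \log K^{(j(i))}_i$ with $j(i)$ the component containing $i$. Since $\Span(\mathbf{1}_{V^{(j)}}) = (\im I_E)^\perp$ and $(\im \tilde Y^\trans)^\perp = \ker \tilde Y$, the subspace on the right equals all of $\R^V$ iff $\ker \tilde Y \cap \im I_E = \{0\}$, i.e.\ iff $\tilde\delta = 0$. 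So the condition holds for every $k$, yielding a CBE.

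For necessity, I would argue in two steps. Existence of some positive $x^{\tilde Y} \in \ker A_k$ means $\ker A_k$ contains a strictly positive vector; by the classical analysis of the Laplacian of a digraph (every vertex must lie in a terminal strong component and no component can properly absorb flow), this forces $G$ to be weakly reversible. With weak reversibility in hand, the linearised condition above must hold for every $k$; as $k$ ranges over $\R^E_{>0}$, the vectors $\log K$ fill out $\R^V$ modulo $\Span(\mathbf{1}_{V^{(j)}})$, so requiring the inclusion for all $k$ forces $\im \tilde Y^\trans + \Span(\mathbf{1}_{V^{(j)}}) = \R^V$, which is $\tilde\delta = 0$ by the same duality.

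The main obstacle is verifying that the parametrisation $k \mapsto \log K(k)$ is surjective onto $\R^V / \Span(\mathbf{1}_{V^{(j)}})$. A dimension count alone is insufficient, because a priori the tree-polynomial image could be a proper algebraic subvariety of full dimension; one must either compute the Jacobian at a generic $k$ or exhibit explicit edge-weight perturbations (for instance, rescaling a single edge inside a carefully chosen spanning in-arborescence rooted at each vertex) that shift individual coordinates of $\log K$ independently. The other technical ingredient, that a positive kernel vector of $A_k$ forces weak reversibility, is standard. The genuinely new content is then the clean linear-duality reduction once the CBE equation is put into log form, which is what makes $\tilde\delta = 0$ the exact criterion.
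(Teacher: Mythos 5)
Your reduction is the standard one (and the one behind the cited result in \cite{mueller:regensburger:2014}, which this paper does not reprove): weak reversibility plus the Matrix--Tree theorem give the positive tree-constant vectors $K^{(j)}$ with disjoint supports, taking logarithms turns the existence of a complex-balanced equilibrium into the affine condition $\log K \in \im \tilde Y^\trans + \Span(\mathbf{1}_{V^{(1)}},\ldots,\mathbf{1}_{V^{(\ell)}})$, and the duality $\im \tilde Y^\trans + (\im I_E)^\perp = (\ker \tilde Y \cap \im I_E)^\perp$ identifies ``the right-hand side is all of $\R^V$'' with $\tilde\delta = 0$. That part, and the forcing of weak reversibility from a positive kernel vector of $A_k$, are fine.

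The genuine gap is exactly the one you flag and then leave open: in the necessity direction you need that, as $k$ ranges over $\R^E_{>0}$, the class of $\log K(k)$ modulo $\Span(\mathbf{1}_{V^{(j)}})$ sweeps out all of $\R^V/\Span(\mathbf{1}_{V^{(j)}})$, and neither a Jacobian computation nor an edge-perturbation argument is actually carried out. The clean way to close it is to invert the correspondence rather than analyze the tree polynomials: given an arbitrary target $\psi \in \R^V_{>0}$, choose on each strongly connected component a strictly positive circulation $\kappa \in \ker I_E \cap \R^E_{>0}$ (a positive combination of directed cycles covering every edge, which exists by strong connectivity) and set $k_{i\to i'} = \kappa_{i\to i'}/\psi_i$. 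Then $(A_k\psi)_{i'} = \sum_{i\to i'}\kappa_{i\to i'} - \sum_{i'\to i''}\kappa_{i'\to i''} = (I_E\kappa)_{i'} = 0$, so $\psi \in \ker A_k \cap \R^V_{>0}$, hence $\log\psi \equiv \log K(k)$ modulo $\Span(\mathbf{1}_{V^{(j)}})$. Since $\log\psi$ is arbitrary in $\R^V$, the inclusion $\log K(k) \in (\ker\tilde Y\cap\im I_E)^\perp$ for all $k$ forces $(\ker\tilde Y\cap\im I_E)^\perp=\R^V$, i.e.\ $\tilde\delta=0$. This is precisely the content of Lemma~1 of \cite{mueller:regensburger:2014} that the present paper invokes in Section~\ref{sec:appl} (``every $x^*\in\R^n_{>0}$ is a complex-balanced equilibrium for some rate constant''); with it, your argument is complete and no algebraic-geometry considerations about the image of the tree-polynomial map are needed.
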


However, this theorem does not fully correspond to the classical one
which guarantees the unique existence of a complex-balanced equilibrium in every stoichiometric class.
For GMAK, complex-balanced equilibria are determined by kinetic orders,
where\-as classes are determined by stoichiometry.
In fact, a true analogue requires extra conditions on the stoichiometric and kinetic-order subspaces,
$S$ and $\tilde S$.

For given $k \in \R^E_{>0}$,
let $Z_k$ be the set of complex-balanced equilibria,
and for given $x' \in \R^n_{>0}$,
let $(x'+S) \cap \R^n_{\ge0}$ be the corresponding stoichiometric class.
We aim to characterize existence and uniqueness of an element in the intersection
\[
Z_k \cap (x' + S)
\]
for all $x' \in \R^n_{>0}$, for all $k \in \R^E_{>0}$.
By Theorem~\ref{thm:dt1},
$Z_k \neq \emptyset$ for all $k \in \R^E_{>0}$
if and only if $\tilde \delta=0$ and the network is weakly reversible,
which we assume in the following.

By Theorem~1 in~\cite{mueller:regensburger:2014},
$x^*_k \in Z_k$ implies the exponential parametrization
\[
Z_k = x^*_k \circ \e^{\tilde S^\perp} .
\]
Moreover, for a weakly reversible CRN,
every $x^* \in \R^n_{>0}$ is a complex-balanced equilibrium for some rate constant $k \in \R^E_{>0}$,
see e.g.\ the proof of Lemma~1 in~\cite{mueller:regensburger:2014}.
Hence, we aim to characterize existence and uniqueness of an element in the intersection
\[
x^* \circ \e^{\tilde S^\perp} \cap \; (x' + S)
\]
for all $x',x^* \in \R^n_{>0}$.

%For given $k \in \R^E_{>0}$,
%let $Z_k$ be the set of complex-balanced equilibria and $x^*_k \in Z_k$.
%By Theorem~1 in~\cite{mueller:regensburger:2014},
%we have the exponential parametrization
%\[
%Z_k = x^*_k \circ \e^{\tilde S^\perp} .
%\]
%For given $x' \in \R^n_{>0}$,
%let $(x'+S) \cap \R^n_{\ge0}$ be the corresponding stoichiometric class.
%%
%We aim to characterize existence and uniqueness of an element in the intersection
%\[
%Z_k \cap (x' + S)
%\]
%for all $x' \in \R^n_{>0}$, for all $k \in \R^E_{>0}$.
%To guarantee $Z_k \neq \emptyset$, 
%we assume that $\tilde \delta=0$ and that the network is weakly reversible,
%cf.~Theorem~\ref{thm:dt1},
%and we reformulate the problem.
%
%For a weakly reversible CRN,
%every $x^* \in \R^n_{>0}$ is a complex-balanced equilibrium for some rate constant $k \in \R^E_{>0}$,
%%
%Hence, we aim to characterize existence and uniqueness of an element in the intersection
%\[
%x^* \circ \e^{\tilde S^\perp} \cap \; (x' + S)
%\]
%for all $x',x^* \in \R^n_{>0}$.

For fixed $x',x^*$, we are interested in existence and uniqueness of $u \in S$, $v \in \tilde S^\perp$ such that
\[
x^* \circ \e^v = x' + u 
\]
and introduce $W \in \R^{d \times n}, \tilde W \in \R^{\tilde d \times n}$ with full ranks $d, \tilde d \le n$ such that
\[
S = \ker W, \quad \tilde S = \ker \tilde W.
\]
We multiply with $W$, write $v = \tilde W^\trans \xi$ with $\xi \in \R^{\tilde d}$, and obtain
\[
W (x^* \circ \e^{\tilde W^\trans \xi}) = W x' .
\]
Hence, we are interested in existence and uniqueness of $\xi \in \R^{\tilde d}$ such that the last equation holds.

Finally, we note that $W x' \in C^\circ$, the interior of $C = \cone W$,
and vary over all $x' \in \R^n_{>0}$ or, equivalently, over all elements of $C^\circ$.
As a result, we aim to characterize
bijectivity of the map
\begin{align*}
F_{x^*} \colon & \R^{\tilde d} \to C^\circ \subseteq \R^d , \\
& \xi \mapsto W (x^* \circ \e^{\tilde W^\trans \xi}) = \sum_{i=1}^n x^*_i \e^{\tilde w^i \mal \xi} w^i
\end{align*}
for all $x^* \in \R^n_{>0}$,
that is, the simultaneous bijectivity of the map $F_{x^*}$ for all $x^*>0$.
%To simplify notation, one writes $c>0$ instead of $x^* \in \R^n_{>0}$ for the positive parameters of the map.
Indeed, this is the content of Theorem~\ref{bij},
and the deficiency zero theorem can be fully extended to GMAK (except for stability).

\begin{thm}[$\delta=\tilde \delta=0$ theorem] \label{thm:generalized}
For a CRN with GMAK, 
there exists a unique complex-balanced equilibrium 
in every stoichiometric class and for all rate constants
if and only if 
$\delta=\tilde\delta=0$, the network is weakly reversible,
and conditions (i), (ii), (iii) in Theorem~\ref{bij} hold.
\end{thm}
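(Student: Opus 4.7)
The plan is to combine Theorem~\ref{thm:dt1} (the $\tilde\delta=0$ theorem, which governs existence of a complex-balanced equilibrium for all rate constants) with Theorem~\ref{bij} (which governs the bijectivity of the family $F_{x^*}$, and hence uniqueness within stoichiometric classes). The reduction carried out in the paragraphs immediately preceding the theorem already translates the intersection problem $Z_k\cap(x'+S)$ into the solvability of $F_{x^*}(\xi)=Wx'$, so the theorem becomes an assembly of two existing results together with a small dimension count.

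For the reverse implication, I would first invoke Theorem~\ref{thm:dt1}: $\tilde\delta=0$ together with weak reversibility supplies some $x^*_k\in Z_k$ for every $k$, so $Z_k=x^*_k\circ e^{\tilde S^\perp}$ is nonempty. The equality $\delta=\tilde\delta=0$ gives $\dim S=\dim\tilde S$, so for any full-rank $W,\tilde W$ with $\ker W=S$ and $\ker\tilde W=\tilde S$ one has $d=\tilde d$, placing us in the setting of Theorem~\ref{bij}. Conditions (i), (ii), (iii) then make $F_{x^*}$ a diffeomorphism onto $C^\circ$ for every $x^*>0$. For any $x'>0$ the point $Wx'$ lies in $C^\circ$, so a unique $\xi$ satisfies $F_{x^*}(\xi)=Wx'$; unwinding the reduction produces exactly one element of $Z_k\cap(x'+S)$.

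For the forward implication, the hypothesized existence of a complex-balanced equilibrium in every stoichiometric class for every $k$ forces $\tilde\delta=0$ and weak reversibility by Theorem~\ref{thm:dt1}. By weak reversibility, every $x^*>0$ is a complex-balanced equilibrium for some $k$, so the uniqueness-and-existence hypothesis, re-expressed through the reduction, says that $F_{x^*}(\xi)=Wx'$ has exactly one solution for every $x'>0$. As $Wx'$ ranges over all of $C^\circ$, this is precisely the simultaneous bijectivity of $F_{x^*}$ for all $x^*>0$. Theorem~\ref{bij} then yields conditions (i), (ii), (iii), and the bijectivity forces $d=\tilde d$, i.e.\ $\delta=\tilde\delta$; combined with $\tilde\delta=0$, this gives $\delta=\tilde\delta=0$.

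The only point needing care is the verification that an arbitrary $x^*>0$ (rather than the specific $x^*_k$ attached to a given $k$) is the right parameter for the exponential map: this is immediate from the parametrization $Z_k=x^*_k\circ e^{\tilde S^\perp}$, which makes the choice of base point free, together with the observation that weak reversibility allows any positive vector to serve as a complex-balanced equilibrium for a suitable rate constant. Beyond this bookkeeping and the dimensional identity $d=n-\dim S$, no substantial new obstacle is expected.
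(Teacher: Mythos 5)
Your proposal is correct and follows essentially the same route as the paper, which proves this theorem implicitly via the reduction in Section~\ref{sec:appl}: Theorem~\ref{thm:dt1} handles existence of complex-balanced equilibria (forcing $\tilde\delta=0$ and weak reversibility), the parametrization $Z_k=x^*_k\circ\e^{\tilde S^\perp}$ together with the fact that any $x^*>0$ is complex balanced for some $k$ converts the intersection problem into the simultaneous bijectivity of $F_{x^*}$ for all $x^*>0$, and Theorem~\ref{bij} (with the necessary condition $d=\tilde d$, i.e.\ $\delta=\tilde\delta$) supplies conditions (i)--(iii). Your bookkeeping of the dimension count and of the role of weak reversibility in freeing the base point $x^*$ matches the paper's argument.
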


In contrast to MAK, 
where complex-balanced equilibria are asymptotically stable,
already two-species CRNs with GMAK
lead to planar systems which have a unique (complex-balanced) equilibrium,
but show rich dynamical behavior,
including super/sub-critical or degenerate Hopf bifurcations, centers, and up to three limit cycles,
see~\cite{boros:hofbauer:mueller:2017,boros:hofbauer:mueller:regensburger:2018, BorosHofbauerMuellerRegensburger2018b,BorosHofbauer2018}.

By Theorem~\ref{pert_tildeS} (and the problem derivation given above),
Theorem~\ref{thm:generalized} is robust
with respect to small perturbations of the kinetic orders
if and only if the closure condition $\sign(S) \subseteq \overline{\sign(\tilde S)}$ holds.

\begin{thm}[robust $\delta=\tilde \delta=0$ theorem] \label{thm:generalizedrobust}
For a CRN with GMAK, 
there exists a unique complex-balanced equilibrium 
in every stoichiometric class, for all rate constants,
and for all small perturbations of the kinetic orders
if and only if 
$\delta=\tilde\delta=0$, the network is weakly reversible,
and $\sign(S) \subseteq \overline{\sign(\tilde S)}$.
\end{thm}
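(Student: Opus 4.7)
The plan is to reduce Theorem~\ref{thm:generalizedrobust} to Theorem~\ref{thm:generalized} and Theorem~\ref{pert_tildeS} via the problem derivation given just before Theorem~\ref{thm:generalized}. Recall that, for a weakly reversible CRN with GMAK, existence and uniqueness of a complex-balanced equilibrium in every stoichiometric class for all rate constants is equivalent to the simultaneous bijectivity of the map $F_{x^*}(\xi) = W(x^* \circ \e^{\tilde W^\trans \xi})$ for all $x^* \in \R^n_{>0}$, where $S = \ker W$ and $\tilde S = \ker \tilde W$. A small perturbation of the kinetic orders corresponds exactly to a small perturbation of $\tilde W$ and hence of $\tilde S$ (while $S$, the graph, and $\delta$ remain untouched).

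For the direction ($\Leftarrow$), I would argue as follows. Suppose $\delta = \tilde\delta = 0$, the network is weakly reversible, and $\sign(S) \subseteq \overline{\sign(\tilde S)}$. Weak reversibility is a purely combinatorial property of $G$, so it is preserved under perturbation of kinetic orders. The stoichiometric deficiency $\delta = 0$ is unaffected. For the kinetic-order deficiency, $\tilde\delta = m - \ell - \dim \tilde S = 0$ means $\dim \tilde S$ is maximal among subspaces of $\im I_E$ containing $\ker \tilde Y \cap \im I_E$; since $\dim(\ker \tilde Y_\eps \cap \im I_E)$ is upper semicontinuous in $\tilde Y$, small perturbations cannot increase it above zero, so $\tilde\delta_\eps = 0$ persists. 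By Lemma~\ref{pert}, $\sign(\tilde S) \subseteq \overline{\sign(\tilde S_\eps)}$, hence $\sign(S) \subseteq \overline{\sign(\tilde S_\eps)}$, for all small perturbations. Proposition~\ref{cc_bij} then gives bijectivity of $F_{x^*}$ for every $x^* > 0$ and every small $\tilde S_\eps$, which translates back into the desired existence/uniqueness.

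For the direction ($\Rightarrow$), I would take the unperturbed network as a special case. Robust existence and uniqueness, restricted to the unperturbed kinetic orders, implies the hypotheses of Theorem~\ref{thm:generalized}, so $\delta = \tilde\delta = 0$, the network is weakly reversible, and conditions (i), (ii), (iii) of Theorem~\ref{bij} hold. Translating robustness across all small perturbations into the language of $F_{x^*}$, I obtain that $F_{x^*}$ is a diffeomorphism for every $x^* > 0$ and every small perturbation $\tilde S_\eps$. Theorem~\ref{pert_tildeS} then yields $\sign(S) \subseteq \overline{\sign(\tilde S)}$.

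The main obstacle I anticipate is the bookkeeping between the CRN-level statement (perturbation of the kinetic-order matrix $\tilde Y$, together with preservation of $\tilde\delta=0$ and of the complex-balancing characterization $Z_k = x^*_k \circ \e^{\tilde S^\perp}$) and the subspace-level statement (perturbation of $\tilde S$ in the Grassmannian). One must verify that an arbitrary small perturbation of $\tilde S$ within the Grassmannian can be realized by a small perturbation of $\tilde Y$ that still admits the CRN structure with the same digraph $G$, and conversely that any admissible perturbation of kinetic orders yields a small perturbation of $\tilde S$. Once this correspondence is established and $\tilde\delta_\eps = 0$ is observed to persist, the theorem follows directly by combining Theorem~\ref{thm:generalized} with Theorem~\ref{pert_tildeS}.
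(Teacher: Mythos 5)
Your proposal is correct and follows exactly the route the paper intends: the paper offers no separate proof of Theorem~\ref{thm:generalizedrobust} beyond the remark that it follows ``by Theorem~\ref{pert_tildeS} (and the problem derivation given above),'' i.e.\ the reduction of the CRN statement to the simultaneous bijectivity of $F_{x^*}$ combined with the perturbation result for $\tilde S$. The additional bookkeeping you supply (persistence of $\tilde\delta=0$ and weak reversibility under small perturbations, and the correspondence between perturbations of $\tilde Y$ and of $\tilde S$ in the Grassmannian) is sound and only makes explicit what the paper leaves implicit.
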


For a CRN with MAK, the stoichiometric and kinetic-order subspaces agree, that is,
$S = \tilde S$, and obviously $\sign(S) \subseteq \overline{\sign(\tilde S)}$.
Hence, the classical deficiency zero theorem for MAK is robust
with respect to small perturbations of the kinetic orders (from the stoichiometric coefficients).

\begin{cor}[robust $\delta=0$ theorem] \label{cor:robust}
For a CRN with MAK, 
there exists a unique (complex-balanced, asymptotically stable) equilibrium 
in every stoichiometric class,
for all rate constants,
and for all small perturbations of the kinetic orders (from the stoichiometric coefficients)
if and only if 
$\delta=0$ and the network is weakly reversible.
\end{cor}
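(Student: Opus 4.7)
The plan is to derive this corollary as a direct specialization of Theorem~\ref{thm:generalizedrobust} to MAK, supplemented by the classical $\delta=0$ theorem to obtain asymptotic stability in the unperturbed case. The key observation is that for MAK we have $\tilde Y = Y$ and consequently $\tilde S = S$, so both $\delta = \tilde\delta$ and the closure condition $\sign(S) \subseteq \overline{\sign(\tilde S)}$ hold trivially (with equality). Weak reversibility is a property of the underlying digraph $(V,E)$ and is unaffected by perturbations of the kinetic orders, so this hypothesis passes unchanged between the MAK and perturbed GMAK settings.

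For the backward direction, assume $\delta=0$ and that the network is weakly reversible. In the unperturbed MAK system, the classical $\delta=0$ theorem yields a unique complex-balanced and asymptotically stable equilibrium in every stoichiometric class for all rate constants. For a small perturbation of the kinetic orders, the resulting GMAK system has the same stoichiometric subspace $S$ and a perturbed kinetic-order subspace $\tilde S_\eps$ close to $S$. By the observation above, the hypotheses of Theorem~\ref{thm:generalizedrobust} ($\delta=\tilde\delta=0$, weak reversibility, closure condition) are met by the unperturbed MAK network, which yields existence and uniqueness of a complex-balanced equilibrium in every stoichiometric class, for all rate constants and all sufficiently small perturbations of the kinetic orders.

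For the forward direction, I would simply specialize to the zero perturbation: existence and uniqueness under the stronger hypothesis (all small perturbations) implies, in particular, existence and uniqueness in the original MAK system for all rate constants, and then the classical $\delta=0$ theorem gives $\delta=0$ and weak reversibility.

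The main technical point to verify will be that the notion of ``small perturbation of the kinetic orders'' in the corollary matches the Grassmannian perturbation of $\tilde S$ used in Theorem~\ref{thm:generalizedrobust}: entrywise perturbation of $\tilde Y$ induces a perturbation $\tilde S_\eps = \im(\tilde Y_\eps I_E)$, and for sufficiently small perturbations the dimension of $\tilde S_\eps$ (hence $\tilde\delta$) is preserved by lower-semicontinuity of rank together with the equality $\dim \tilde S_\eps = \dim S$ that must hold in the $\tilde\delta=0$ case. Once this correspondence is established, the corollary follows by invoking the two theorems as above.
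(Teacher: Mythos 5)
Your proposal is correct and follows essentially the same route as the paper: for MAK one has $\tilde Y = Y$, hence $S=\tilde S$, so $\delta=\tilde\delta$ and $\sign(S)\subseteq\overline{\sign(\tilde S)}$ hold trivially, and the corollary is then a direct specialization of Theorem~\ref{thm:generalizedrobust} combined with the classical $\delta=0$ theorem (which supplies asymptotic stability in the unperturbed case). Your additional remark on matching entrywise perturbations of $\tilde Y$ with Grassmannian perturbations of $\tilde S$ is a sensible point that the paper leaves implicit.
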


\subsection*{Acknowledgments}

The closure condition (\ref{cc}) was suggested by Gheorghe Craciun
as a criterion for the bijectivity of the family of exponential maps.
We thank Gheorghe Craciun, Casian Pantea, and Polly Yu for fruitful discussions 
(at workshops at 
the University of Wisconsin-Madison in 2015,
the American Institute of Mathematics, San Jose, in 2016, 
and the Banff International Research Station
and the Mathematisches Forschungsinstitut Oberwolfach in 2017).
We also thank three anonymous referees for their careful reading and numerous helpful comments.

SM was supported by the Austrian Science Fund (FWF), project P28406.
GR was supported by the FWF, project P27229.

%%%%%%%%% %%%%%%%%% %%%%%%%%% %%%%%%%%% %%%%%%%%%
%%%%%%%%% %%%%%%%%% %%%%%%%%% %%%%%%%%% %%%%%%%%%
%%%%%%%%% %%%%%%%%% %%%%%%%%% %%%%%%%%% %%%%%%%%%

\appendix
% \addcontentsline{toc}{section}{Appendices}
\section*{Appendices}

\section{Sign vectors and face lattices} \label{sv}

In the context of (realizable) oriented matroids,
we discuss the relation between sign vectors of linear subspaces
and face lattices of polyhedral cones. % and polytopes.
For further details, we refer to~\cite[Chapter~7]{BachemKern1992}, \cite[Chapters~2 and 6]{Ziegler1995}, 
\cite{Richter-GebertZiegler1997}, 
and the encyclopedic study~\cite{BjornerLasSturmfelsWhiteZiegler1999}.

Let $W = (w^1, \ldots , w^n) \in \R^{d\times n}$ with $d \le n$ have full rank.
Then $W$ is called a {\em vector configuration} (of $n$ vectors in $\R^d$),
and $\im W^\trans \subseteq \R^n$ is a corresponding linear subspace.
Now let $v = W^\trans x \in \im W^\trans$ with $x \in \R^d$.
Then $v_i = w^i \mal x$,
and the sign vector $\tau = \sign(v) \in \sign(\im W^\trans) \subseteq \{-,0,+\}^n$ describes the positions of the vectors $w^1, \ldots, w^n$
relative to the hyperplane with normal vector $x$.

Elements of $\sign(\im W^\trans)$ are called {\em co\-vectors},
and elements of $\sign(\im W^\trans)$ with minimal support are called {\em co\-circuits}.
Analogously, elements of $\sign(\ker W)$ are called {\em vectors},
and elements of $\sign(\ker W)$ with minimal support are called {\em circuits}.

The {\em chirotope} of the vector configuration $W$ is the map
\begin{align*}
\chi \colon & \{1,\ldots,n\}^d \to \{-,0,+\} \, , \\
& (i_1, \ldots, i_d) \mapsto \sign(\det(w^{i_1}, \ldots , w^{i_d}))
\end{align*}
which records for each $d$-tuple of vectors $w^i$
if it forms a positively (or negatively) oriented basis of $\R^d$ or it is not a basis.
%It can, for example, be used to test algorithmically if the sign vectors of two subspaces are equal,
%that is, to decide if $\sign(\im(V)) = \sign(\im(\tilde{V}))$
%for two matrices $V, \tilde{V} \in \R^{n \times d}$.

The {\em oriented matroid} of $W$ is a combinatorial structure that can be given by any of the above data (co/vectors, co/circuits, or chirotopes) 
and defined/characterized in terms of any of the corresponding axiom systems.
As an example, we state the chirotope/cocircuit translation,
see Theorems 6.2.3 in \cite{Richter-GebertZiegler1997}
or 8.1.6 in \cite{DeLoeraRambauSantos2010}.

\begin{thm} \label{translation}
Let $W \in \R^{d \times n}$ be a vector configuration with chirotope $\chi$.
Then the set of cocircuits is given by
\[
\mathcal{C}^*(\chi) =
\Big\{ \big( \chi(I,1), \chi(I,2), \ldots, \chi(I,n) \big) \mid I \in \{1,\ldots,n\}^{d-1} \Big\} .
\]
Conversely, let $W \in \R^{d \times n}$ be a vector configuration with cocircuits $\mathcal{C}^*$.
Then there exists a unique pair of chirotopes $(\chi,-\chi)$
such that $\mathcal{C}^*(\chi) = \mathcal{C}^*(-\chi) = \mathcal{C}^*$.
\end{thm}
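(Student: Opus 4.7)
The plan is to identify the cocircuits of $W$ as sign vectors $\sign(W^\trans x)$ of inclusion-minimal nonempty support, and to read them off from the determinants defining $\chi$ via a Cramer-style construction. First I would observe that for nonzero $v = W^\trans x \in \im W^\trans$ with $\tau = \sign(v)$ and $\tau^0 = \{i : w^i \mal x = 0\}$, minimality of $\supp(\tau)$ is equivalent to the column family $(w^i)_{i \in \tau^0}$ having rank exactly $d-1$: smaller rank would leave a higher-dimensional solution space for $x$, so a perturbation of $x$ inside it would strictly enlarge $\tau^0$ while keeping $\tau$ nonzero; larger rank would force $x = 0$. Hence every cocircuit is parametrized (up to a global sign of $x$) by a $(d-1)$-tuple of indices with linearly independent columns.

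Next, for such a $(d-1)$-tuple $I = (i_1, \ldots, i_{d-1}) \in [n]^{d-1}$, the system $w^{i_1} \mal x = \cdots = w^{i_{d-1}} \mal x = 0$ has a one-dimensional solution space spanned by the vector $x_I$ whose $k$-th entry is the signed cofactor $(-1)^{k+d} \det(M^{(k)})$, where $M^{(k)}$ is the $(d-1)\times(d-1)$ matrix obtained from $(w^{i_1}, \ldots, w^{i_{d-1}})$ by deleting the $k$-th row. Laplace expansion along the last column of the $d\times d$ matrix $(w^{i_1}, \ldots, w^{i_{d-1}}, w^i)$ yields
\[
w^i \mal x_I \;=\; \det(w^{i_1}, \ldots, w^{i_{d-1}}, w^i) \qquad \text{for every } i \in [n],
\]
so that $\sign(W^\trans x_I) = \bigl(\chi(I,1), \ldots, \chi(I,n)\bigr)$. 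Replacing $x_I$ by $-x_I$ gives the opposite sign vector, so as $I$ ranges over $[n]^{d-1}$ (with dependent-column tuples contributing the zero sign vector, which is discarded) we exhaust $\mathcal{C}^*$, proving the first assertion.

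For the converse I would start from $\mathcal{C}^*$ and try to reconstruct $\chi$ up to a global sign. For each $(d-1)$-tuple $I$ whose associated cocircuit pair $\pm\tau_I \in \mathcal{C}^*$ is nonzero, the assignment $\chi(I, i) := \eps_I \, \tau_{I,i}$ introduces one sign $\eps_I \in \{\pm\}$ per $I$; the task is to show that these local ambiguities collapse to a single overall $\pm$, and that the resulting function satisfies the alternating and Grassmann--Pl\"ucker sign relations characterizing chirotopes. The main obstacle is exactly this consistency step: the interactions between overlapping $(d-1)$-tuples, governed by the signed three-term Grassmann--Pl\"ucker relations, are what force the signs $\eps_I$ to synchronize. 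The cleanest route is to invoke the standard chirotope/cocircuit axiomatization of realizable oriented matroids, cf.~\cite{Richter-GebertZiegler1997} or \cite{BjornerLasSturmfelsWhiteZiegler1999}; the forward half, by contrast, reduces to the transparent cofactor computation above.
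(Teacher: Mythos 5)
The paper itself does not prove Theorem~\ref{translation}: it is quoted from the literature (Theorem~6.2.3 in Richter-Gebert--Ziegler, Theorem~8.1.6 in De Loera--Rambau--Santos), so there is no in-paper argument to compare against, and your proposal already does more than the paper by proving the forward half outright. That half is essentially correct: with $(x_I)_k = (-1)^{k+d}\det(M^{(k)})$, Laplace expansion along the last column does give $w^i \cdot x_I = \det(w^{i_1},\ldots,w^{i_{d-1}},w^i)$ for all $i$; $x_I \neq 0$ exactly when the columns indexed by $I$ are independent, in which case $x_I$ spans the solution space of $w^{i_1}\cdot x = \cdots = w^{i_{d-1}}\cdot x = 0$; and deferring the converse (synchronizing the local signs via the Grassmann--Pl\"ucker relations) to the standard axiomatics is no less than what the paper does.

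One step is stated imprecisely and should be repaired. In your rank characterization of minimal support, when $\dim\Span(w^i : i\in\tau^0) \le d-2$, a perturbation of $x$ \emph{inside} the solution space $V=\{x' : w^i\cdot x'=0 \text{ for } i\in\tau^0\}$ does not enlarge $\tau^0$: it keeps those coordinates at zero and generically leaves the remaining signs unchanged. The correct move is to intersect $V$ (of dimension $\ge 2$) with one further hyperplane $w^j\cdot x'=0$ for some $j\in\supp(\tau)$; the intersection still contains a nonzero $x''$, and since $W$ has full rank, $\sign(W^\trans x'')$ is a nonzero covector whose support is strictly contained in $\supp(\tau)$, contradicting minimality. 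With that fix, the equivalence between minimal support and rank $d-1$ holds, and your two halves fit together. Two cosmetic points: the displayed set on the right-hand side contains the zero sign vector for dependent tuples $I$, which must be discarded (as you note); and for $d=1$ the tuple $I$ is empty, so reorderings cannot produce both signs of a cocircuit and the formula must be read as closed under negation.
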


The {\em face lattice} of $C = \cone W \subseteq \R^d$, the polyhedral cone generated by the vectors $w^1, \ldots, w^n$,
can be obtained from the sign vectors of the linear subspace $\im W^\trans$.
In fact, it is the set $\sign(\im W^\trans)_\zp = \sign(\im W^\trans) \cap \{0,+\}^n$ with the partial order induced by the relation $+>0$.
A face $f$ of $C$ corresponds to a supporting hyperplane with normal vector $x$
such that $w^i \mal x = 0$ for $w^i \in f$
and $w^i \mal x > 0$ for $w^i \not\in f$, lying on the positive side of the hyperplane.
(The vector~$x$ lies on the corresponding face of the dual cone $C^*$.)
Hence the face $f$ with $I=\{ i \mid w^i \in f \}$ is characterized by the sign vector $\tau = \sign(W^\trans x) \in \sign(\im W^\trans)_\zp$ with $I=\tau^0$. Moreover, for two faces $f$ and $f'$ of $C$ with corresponding nonnegative sign vectors $\tau$ and $\tau'$, the order is reversed: $f \subseteq f'$ if and only if $\tau' \leq \tau$.

The {\em lineality space} of a cone $C$ is given by the set $C \cap (-C)$.
It is the minimal face of $C$, in the sense that it is contained in all faces.
The lineality space of $C = \cone W$ 
is characterized by the maximal element of $\sign(\im W^\trans)_\zp$
or, equivalently, by the maximal element of $\sign(\ker W)_\zp$.
Thereby, nonzero elements of $\sign(\ker W)_\zp$ correspond to positive dependencies of vectors $w^i$ (in the lineality space).

A cone $C$ is called {\em pointed} if its lineality space is $\{0\}$, % if $C \cap (-C) = \{ 0 \}$
that is, if it has vertex~0. 
%A cone is pointed if and only if it has an extreme ray, 
%and every pointed polyhedral cone is the conical hull of its finitely many extreme rays.
Note that, if $(+,\ldots,+)^\trans \in \sign(\im W^\trans)_\zp$ (that is, $\sign(\ker W)_\zp = \{0\}$),
then $C = \cone W$ is pointed.

Finally, we note that sign vectors of a linear subspace are closed under composition:
Let $S$ be a subspace of $\R^n$ and $\tau,\rho \in \sign(S)$.
Then, also $\tau \circ \rho \in \sign(S)$.
To see this, let $u,v \in \R^n$ with $\tau = \sign(u)$, $\rho = \sign(v)$.
Then, $\tau \circ \rho = \sign(u+\eps v) \in \sign(S)$ for small $\eps>0$.
Moreover, every nonzero sign vector of a linear subspace can be written as a conformal composition
of sign vectors with minimal support,
see Theorem~1 in \cite{Rockafellar1969}, Proposition~5.35 in \cite{BachemKern1992}, or Theorem~3 in \cite{MuellerRegensburger2016}.

\begin{thm} \label{conformal}
Let $S$ be a subspace of $\R^n$ and $\tau \in \sign(S)$ be nonzero.
Then there are $\rho_i \in \sign(S)$ with minimal support and $\rho_i \le \tau$ such that
\[
\tau = \rho_1 \circ \cdots \circ \rho_N .
\]
The $\rho_i$ can be chosen such that $N \le \min (\dim(S), \abs{\supp(\tau)})$.
\end{thm}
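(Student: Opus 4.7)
Fix $x \in S$ with $\sign(x)=\tau$. I will inductively extract vectors $y_1,\ldots,y_N \in S$ with $\sign(y_k)\le\tau$ of \emph{minimal support in} $\sign(S)\setminus\{0\}$, together with positive scalars $\mu_1,\ldots,\mu_N$, so that
\[
 x = \mu_1 y_1 + \mu_2 y_2 + \cdots + \mu_N y_N .
\]
Setting $\rho_k = \sign(y_k)$, conformity with $\tau$ makes the sum ``cancellation-free'' coordinatewise, which forces $\sign(x) = \rho_1\circ\cdots\circ\rho_N$, yielding the desired composition.

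\textbf{Key lemma.} For any nonzero $\sigma \in \sign(S)$ there exists $y\in S$ with $\sign(y)\le\sigma$ such that $\sign(y)$ has minimal support in $\sign(S)\setminus\{0\}$. I would prove this by taking $y$ of smallest support among the nonempty set $\{z\in S\setminus\{0\}:\sign(z)\le\sigma\}$, and arguing by contradiction: if some nonzero $z\in S$ satisfied $\supp(z)\subsetneq\supp(y)$, then on $\supp(z)$ the signs of $y$ are nonzero, so one can choose the sign of a scalar $\lambda$ (replacing $z$ by $-z$ if needed, and treating separately the case where all $z_i$ on $\supp(z)$ have the sign $\pm\tau_i$) and the critical magnitude $\lambda=\min\{y_i/z_i:z_i\text{ has sign }\tau_i\}$, so that $y-\lambda z\in S$ still satisfies $\sign(y-\lambda z)\le\sigma$ but has strictly smaller support, contradicting minimality. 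This pivoting case-analysis is the main obstacle; it is the only place where one needs to juggle signs carefully.

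\textbf{Iterative reduction.} Set $x^{(0)}=x$. Given $x^{(k-1)}\neq 0$ with $\sign(x^{(k-1)})\le\tau$, apply the lemma with $\sigma=\sign(x^{(k-1)})$ to obtain $y_k\in S$ with $\sign(y_k)$ of minimal support and $\sign(y_k)\le\sign(x^{(k-1)})\le\tau$. On $\supp(y_k)$ the entries $x^{(k-1)}_i$ and $(y_k)_i$ share the sign $\tau_i$, so $\mu_k := \min_{i\in\supp(y_k)} x^{(k-1)}_i/(y_k)_i > 0$ is well-defined, and $x^{(k)} := x^{(k-1)}-\mu_k y_k$ satisfies $\sign(x^{(k)})\le\sign(x^{(k-1)})\le\tau$ with $\supp(x^{(k)})\subsetneq\supp(x^{(k-1)})$ (at least one coordinate hits $0$ at $\mu_k$, and no sign flips). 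The process terminates at the first $N$ with $x^{(N)}=0$, and then $x=\sum_{k=1}^N \mu_k y_k$.

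\textbf{Bounds and conclusion.} Since supports strictly decrease, $N\le|\supp(\tau)|$. For the dimension bound I would show $x^{(0)},\ldots,x^{(N-1)}$ are linearly independent: in any nontrivial relation $\sum_{k=0}^{N-1} c_k x^{(k)}=0$, take the least $k$ with $c_k\neq 0$ and evaluate at an index $i\in\supp(x^{(k)})\setminus\supp(x^{(k+1)})$; all later terms vanish at $i$, forcing $c_k=0$, a contradiction. Hence $N\le\dim(S)$. Finally, $\sign(x)_i = \tau_i$ for $i\in\supp(\tau)$, and because all $y_k$ are conformal with $\tau$ and $\mu_k>0$, the value $x_i = \sum_k \mu_k (y_k)_i$ has sign $\tau_i$ whenever at least one $(y_k)_i\neq 0$, so $\bigcup_k \supp(y_k)=\supp(\tau)$ and, coordinatewise, $(\rho_1\circ\cdots\circ\rho_N)_i$ picks out the common sign $\tau_i$. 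Therefore $\tau = \rho_1\circ\cdots\circ\rho_N$, completing the proof with $N\le\min(\dim(S),|\supp(\tau)|)$.
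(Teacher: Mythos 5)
Your proof is correct, and it is essentially the standard argument: the paper itself gives no proof of Theorem~\ref{conformal} but refers to Rockafellar's theorem on elementary vectors (and to Bachem--Kern and M\"uller--Regensburger), whose proofs proceed exactly by your two steps --- a pivoting lemma that extracts a conformal vector of minimal support, followed by iterative subtraction with the critical step size so that the support strictly decreases. The only spot worth polishing is the choice of $\lambda$ in the key lemma: the clean formulation is to take $\lambda$ to be the root $y_j/z_j$ of smallest absolute value over $j\in\supp(z)$, so that no component of $y-\lambda z$ changes sign on the way to $\lambda$; your case split over the sign of $z_i$ amounts to the same thing and the linear-independence argument for $N\le\dim(S)$ is fine.
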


%%%%%%%%% %%%%%%%%% %%%%%%%%% %%%%%%%%% %%%%%%%%%

\section{A general theorem of the alternative} \label{ta}

We recall a general theorem of the alternative for subspaces of $\R^n$ 
that allows to easily derive theorems of the alternative for sign vectors of a linear subspace and its orthogonal complement. 
For the relation to standard theorems of the alternative,
see \cite{Minty1974};
for the corresponding statements for arbitrary oriented matroids,
see \cite[Section 3.4]{BjornerLasSturmfelsWhiteZiegler1999} or~\cite[Chapter~5]{BachemKern1992}.

\begin{dfn}
Let $x \in \R^n$, and let $I_1,\ldots,I_n$ be intervals of $\R$.
We define the interval
\begin{align*}
I(x) &\equiv x_1 I_1  + \ldots + x_n I_n \\ &= \{ x_1 y_1 + \ldots + x_n y_n \in \R \mid y_1 \in I_1, \ldots, y_n \in I_n \}
\end{align*}
and write $I(x) > 0$ if $y>0$ for all $y \in I(x)$.
\end{dfn}

\begin{thm}[Theorem 22.6 in \cite{Rockafellar1970}] \label{minty}
Let $S$ be a subspace of $\R^n$, and let $I_1,\ldots,I_n$ be intervals of $\R$.
Then one and only one of the following alternatives holds:
\begin{itemize}
\item[(a)]
There exists a vector $x = (x_1,\ldots,x_n)^\trans \in S$ such that
\[
x_1 \in I_1, \, \ldots,\, x_n \in I_n .
\]
\item[(b)]
There exists a vector $\xa = (\xa_1,\ldots, \xa_n)^\trans \in S^\perp$ such that
\[
\xa_1 I_1 + \ldots + \xa_n I_n > 0 .
\]
\end{itemize}
\end{thm}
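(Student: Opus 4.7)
The plan is to verify mutual exclusivity of (a) and (b) directly and then derive (b) from $\neg$(a) via a convex separation argument, exploiting both the linear structure of $S$ and the product structure of $I := I_1 \times \cdots \times I_n \subseteq \R^n$. Mutual exclusivity is immediate: if $x = (x_1, \ldots, x_n)^\trans \in S$ satisfies $x_i \in I_i$ for all $i$ and $\xa \in S^\perp$, then $0 = \xa \cdot x$ lies in $I(\xa) = \xa_1 I_1 + \cdots + \xa_n I_n$, contradicting $I(\xa) > 0$.

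For the main direction, I would assume (a) fails, so $I$ and $S$ are disjoint nonempty convex subsets of $\R^n$. Applying the proper-separation theorem for convex sets (Theorem~11.3 in~\cite{Rockafellar1970}) produces a nonzero $\xa \in \R^n$ and $\alpha \in \R$ with $\xa \cdot s \le \alpha \le \xa \cdot y$ for all $s \in S$ and $y \in I$, and with $\xa \cdot (\cdot)$ not identically equal to $\alpha$ on $S \cup I$. Boundedness from above of $\xa \cdot (\cdot)$ on the subspace $S$ forces $\xa \in S^\perp$ and $\alpha \ge 0$. If $\alpha > 0$, then $I(\xa) \subseteq [\alpha, \infty)$ is already positive and we are done; otherwise $\alpha = 0$, and the proper-separation condition yields some $y_0 \in I$ with $\xa \cdot y_0 > 0$, while $I(\xa) \subseteq [0, \infty)$.

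The main obstacle is upgrading this weak bound to the strict $I(\xa) > 0$ required by (b) in the case $\alpha = 0$, i.e.\ when $F := \{y \in I : \xa \cdot y = 0\}$ is nonempty. A coordinatewise analysis using the product structure of $I$ shows that $F$ is itself a product $F_1 \times \cdots \times F_n$: indices with $\xa_i = 0$ contribute $F_i = I_i$, while indices with $\xa_i \ne 0$ contribute either the unique endpoint of $I_i$ at which $\xa_i t$ attains its infimum, or else force $F = \emptyset$ (in which case we are done). Moreover $F \cap S \subseteq I \cap S = \emptyset$, so I induct on $\dim F$: applying proper separation to the disjoint convex pair $(F, S)$ produces $\xa' \in S^\perp$ that is nonnegative on $F$ and strictly positive at some $y_1 \in F$, and then $\xa + \eps \xa' \in S^\perp$ for sufficiently small $\eps > 0$ remains nonnegative on all of $I$. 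The delicate point is choosing $\eps$ uniformly over the possibly unbounded set $I$; this is handled by a recession-cone argument: the recession directions of $F$ are exactly those recession directions of $I$ annihilated by $\xa$, on which $\xa'$ is automatically nonnegative, while along transverse recession directions $\xa$ itself is strictly positive and absorbs $\eps \xa'$ for small $\eps$, with compactness filling in the bounded base. The new zero face is then $\{y \in F : \xa' \cdot y = 0\}$, a proper subset of $F$ of strictly smaller dimension, and the induction terminates after at most $\dim I$ iterations with a final $\xa \in S^\perp$ satisfying $I(\xa) > 0$, establishing (b).
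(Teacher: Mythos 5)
The paper offers no proof of this statement at all --- it is imported verbatim as Theorem~22.6 of Rockafellar's \emph{Convex Analysis} --- so there is no in-paper argument to compare yours against; I can only judge the proposal on its own terms. Its structure is sound. Exclusivity is exactly as you say. For exhaustiveness, the proper separation of $S$ from the box $I=I_1\times\cdots\times I_n$ correctly yields $\xa\in S^\perp$ with $I(\xa)\subseteq[0,\infty)$ and $\xa\cdot y_0>0$ for some $y_0\in I$; the zero set $F=\{y\in I:\xa\cdot y=0\}$ is indeed a subproduct $F_1\times\cdots\times F_n$ (with $F_i=I_i$ where $\xa_i=0$ and $F_i$ a single distinguished endpoint, or else $F=\emptyset$, in which case the interval $I(\xa)$ misses $0$ and you are done); and separating $F$ from $S$ produces $z\in S^\perp$ (your $\xa'$) nonnegative on $F$ and positive somewhere on $F$, whose exposed zero face in $F$ has strictly smaller dimension, so the induction terminates.

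The one justification I would not accept as written is ``compactness filling in the bounded base.'' The bounded factors of $I$ need not be compact (half-open intervals), and, more importantly, compactness is not the operative property: for the compact convex set $\{\,y\in\R^2: 0\le y_2\le\sqrt{y_1},\ y_1\in[0,1]\,\}$ with $f(y)=y_1$ and $g(y)=-y_2$ one has $f\ge0$ everywhere and $g\ge 0$ on $\{f=0\}$, yet $f+\eps g<0$ at $(\de,\sqrt{\de})$ for $\de<\eps^2$, so no uniform $\eps$ exists. What actually saves you is the product structure you have already established. Let $p=p(y)\in F$ be the coordinatewise projection, $p_i=y_i$ if $\xa_i=0$ and $p_i$ the distinguished endpoint of $I_i$ otherwise; then $\xa\cdot y=\sum_{\xa_i\neq0}\xa_i(y_i-p_i)$ with every summand nonnegative, hence $z\cdot(y-p)\ge -M\,\xa\cdot y$ with $M=\max_{\xa_i\neq0}\abs{z_i/\xa_i}$, while $z\cdot p\ge0$ because $p\in F$. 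Therefore $(\xa+\eps z)\cdot y\ge(1-\eps M)\,\xa\cdot y\ge0$ for all $y\in I$ once $\eps\le 1/M$, uniformly over bounded and unbounded parts alike, and equality forces $y\in F$ with $z\cdot y=0$, which is exactly the dimension drop your induction needs. With that replacement (which also subsumes the recession-cone discussion), the proof is complete.
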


\begin{cor} \label{minty1}
Let $S$ be a subspace of $\R^n$ and $\sigma \in \{ -,0,+ \}^n$ be a nonzero sign vector.
Then either (a) there exists a vector $x \in S$ with $x_i > 0$ for $i \in \sigma^+$ and $x_i < 0$ for $i \in \sigma^-$
or (b) there exists a nonzero vector $\xa \in S^\perp$ with $\xa_i \ge 0$ for $i \in \sigma^+$, $\xa_i \le 0$ for $i \in \sigma^-$, and $\xa_i=0$ otherwise.
In terms of sign vectors, either there exists $\xi \in \sign(S)$ with $\xi \ge \sigma$
or there exists a nonzero $\xi^* \in \sign(S^\perp)$ with $\xi^* \le \sigma$.
\end{cor}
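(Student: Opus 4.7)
The plan is to invoke Theorem~\ref{minty} with the intervals $I_i = (0,\infty)$ for $i\in\sigma^+$, $I_i = (-\infty,0)$ for $i\in\sigma^-$, and $I_i = \R$ for $i\in\sigma^0$. Under this choice, alternative (a) of Theorem~\ref{minty} reads: there exists $x\in S$ with $x_i>0$ for $i\in\sigma^+$ and $x_i<0$ for $i\in\sigma^-$, with the components on $\sigma^0$ unrestricted. That is exactly alternative (a) of the corollary.

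The substantive work is translating alternative (b) of Theorem~\ref{minty} into the claimed form. The condition $\xa_1 I_1+\ldots+\xa_n I_n>0$ asserts that $\sum_i \xa_i y_i$ is positive for \emph{every} choice of $y_i\in I_i$. I would argue coordinatewise. For $i\in\sigma^0$ the interval $I_i=\R$ is unbounded in both directions, so any nonzero $\xa_i$ would allow the $i$-th summand to be driven to $-\infty$; hence $\xa_i=0$. For $i\in\sigma^+$, by fixing small values $y_j$ on the other coordinates and letting $y_i\to\infty$ in $(0,\infty)$, a negative $\xa_i$ would push the sum to $-\infty$, forcing $\xa_i\ge 0$; symmetrically $\xa_i\le 0$ for $i\in\sigma^-$. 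Finally, $\xa=0$ would give the sum identically $0$, contradicting strict positivity, so $\xa\neq 0$. These are exactly the conditions in alternative (b) of the corollary.

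The sign-vector reformulation is then a direct translation of the partial order on $\{-,0,+\}^n$ (with $0<+$ and $0<-$): $\xi\ge\sigma$ componentwise encodes ``$x_i>0$ on $\sigma^+$ and $x_i<0$ on $\sigma^-$'', and $\xi^*\le\sigma$ with $\xi^*$ nonzero encodes the sign pattern on $\xa$ deduced above. The exclusive disjunction in the corollary is inherited directly from the ``one and only one'' clause of Theorem~\ref{minty}.

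I do not expect a substantive obstacle here; the proof is essentially a calibration of intervals so that (a) and (b) of Theorem~\ref{minty} specialize to the desired sign conditions. The only care needed lies in the coordinatewise sign deductions on $\xa$ from the strict-positivity hypothesis, together with the small but essential observation that $\xa=0$ is excluded precisely because $\{0\} \not > 0$.
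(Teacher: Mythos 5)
Your proof is correct and takes exactly the paper's approach: the paper's entire proof is the one-line invocation of Theorem~\ref{minty} with $I_i = (0,+\infty)$ for $i \in \sigma^+$, $I_i = (-\infty,0)$ for $i \in \sigma^-$, and $I_i = (-\infty,+\infty)$ otherwise, and you simply spell out the routine coordinatewise translation of alternative (b) that the paper leaves implicit.
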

\begin{proof}
By Theorem~\ref{minty} with $I_i = (0,+\infty)$ for $i \in \sigma^+$, $I_i = (-\infty,0)$ for $i \in \sigma^-$, and $I_i = (-\infty,+\infty)$ otherwise.
\end{proof}

\begin{cor} \label{orthogonal}
Let $S$ be a subspace of $\R^n$. Then,
\[
\sign(S^\perp)=\sign(S)^\perp. 
\]
\end{cor}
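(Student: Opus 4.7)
The plan is to establish both inclusions of $\sign(S^\perp) = \sign(S)^\perp$ separately, with the easy containment $\sign(S^\perp) \subseteq \sign(S)^\perp$ following immediately from the definition of orthogonality of sign vectors: if $y \in S^\perp$ and $x \in S$, then $\sum_i x_i y_i = 0$, so either every summand vanishes (i.e., $\sign(x_i)\sign(y_i) = 0$ for all $i$) or the sum contains both strictly positive and strictly negative summands. These are precisely the two alternatives in the definition of $\sign(y) \cdot \sign(x) = 0$.

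For the reverse inclusion, given $\tau \in \sign(S)^\perp$, I would apply Theorem~\ref{minty} to the subspace $S^\perp$ with intervals $I_i = (0,\infty)$ if $\tau_i = +$, $I_i = (-\infty,0)$ if $\tau_i = -$, and $I_i = \{0\}$ if $\tau_i = 0$. Alternative (a) of the theorem produces a vector $y \in S^\perp$ with $y_i \in I_i$ for every $i$, hence $\sign(y) = \tau$, which is the desired conclusion. The whole argument therefore reduces to ruling out alternative (b), which would produce $x \in (S^\perp)^\perp = S$ with $\sum_i x_i I_i > 0$.

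The main technical step is the sign analysis in this remaining case. The condition $\sum_i x_i I_i > 0$ forces $x_i \ge 0$ for $i \in \tau^+$ and $x_i \le 0$ for $i \in \tau^-$ (otherwise one could drive the sum negative by taking the other $y_j$ close to zero), and also requires at least one strict inequality within $\supp(\tau)$ (otherwise the interval would collapse to $\{0\}$, which is not strictly positive). Consequently the componentwise products $\tau_i\,\sign(x_i)$ all lie in $\{0,+\}$ with at least one equal to $+$, so $\tau \cdot \sign(x) \ne 0$. Since $\sign(x) \in \sign(S)$, this contradicts $\tau \in \sign(S)^\perp$ and completes the proof.
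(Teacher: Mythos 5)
Your proof is correct and follows essentially the same route as the paper's: the forward inclusion directly from the definition of sign-vector orthogonality, and the reverse inclusion by applying Theorem~\ref{minty} to $S^\perp$ with the intervals $I_i=(0,\infty)$, $(-\infty,0)$, $\{0\}$ according to $\tau_i$, and showing that alternative (b) would contradict $\tau\in\sign(S)^\perp$ (the paper phrases this contrapositively, but the argument is the same). Your explicit observation that at least one $x_i$ must be nonzero on $\supp(\tau)$ is a point the paper leaves implicit, so no gap.
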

\begin{proof}
($\subseteq$): Let $\tau \in \sign(S^\perp)$ and $\rho \in \sign(S)$.
Now, let $u \in S^\perp$ and $v \in S$ such that $\tau = \sign(u)$ and $\rho = \sign(v)$.
Then, $u \cdot v = 0$ implies $\tau \cdot \rho = 0$,
and hence $\tau \in \sign(S)^\perp$.

($\supseteq$): Let $\tau \notin \sign(S^\perp)$,
that is, there exists no $x \in S^\perp$ such that $\sign(x) = \tau$.
By Theorem~\ref{minty} with $I_i = (0,+\infty)$ for $i \in \tau^+$, $I_i = (-\infty,0)$ for $i \in \tau^-$, and $I_i = \{0\}$ otherwise, there exists a nonzero $\xa \in S$ such that 
$\xa_i \ge 0$ for $i \in \tau^+$ and $\xa_i \le 0$ for $i \in \tau^-$.
Let $\rho = \sign(\xa) \in \sign(S)$. 
Then, $\tau \cdot \rho \neq 0$,
and hence $\tau \notin \sign(S)^\perp$.
\end{proof}
For an alternative proof, using Farkas Lemma, see Proposition~6.8 in \cite{Ziegler1995}.

%%%%%%%%% %%%%%%%%% %%%%%%%%% %%%%%%%%% %%%%%%%%%

\bibliographystyle{abbrv} %siamplain
\bibliography{fractional,birch}

\begin{thebibliography}{10}

\bibitem{AlbouyFu2014}
A.~Albouy and Y.~Fu.
\newblock Some remarks about {D}escartes' rule of signs.
\newblock {\em Elem. Math.}, 69, 2014.

\bibitem{ArceoJoseLaoMendoza2017}
C.~P.~P. Arceo, E.~C. Jose, A.~R. Lao, and E.~R. Mendoza.
\newblock Reaction networks and kinetics of biochemical systems.
\newblock {\em Math. Biosci.}, 283:13--29, 2017.

\bibitem{ArceoJoseMarin-SanguinoMendoza2015}
C.~P.~P. Arceo, E.~C. Jose, A.~Marin-Sanguino, and E.~R. Mendoza.
\newblock Chemical reaction network approaches to biochemical systems theory.
\newblock {\em Math. Biosci.}, 269:135--152, 2015.

\bibitem{BachemKern1992}
A.~Bachem and W.~Kern.
\newblock {\em Linear programming duality}.
\newblock Springer-Verlag, Berlin, 1992.

\bibitem{BanachMazur1934}
S.~{Banach} and S.~{Mazur}.
\newblock {\"Uber mehrdeutige stetige Abbildungen.}
\newblock {\em {Stud. Math.}}, 5:174--178, 1934.

\bibitem{BP2016}
M.~Banaji and C.~Pantea.
\newblock Some results on injectivity and multistationarity in chemical
  reaction networks.
\newblock {\em SIAM J. Appl. Dyn. Syst.}, 15:807--869, 2016.

\bibitem{BjornerLasSturmfelsWhiteZiegler1999}
A.~Bj{\"o}rner, M.~Las~Vergnas, B.~Sturmfels, N.~White, and G.~M. Ziegler.
\newblock {\em Oriented matroids}, volume~46 of {\em Encyclopedia Math. Appl.}
\newblock Cambridge University Press, Cambridge, second edition, 1999.

\bibitem{BorosHofbauer2018}
B.~Boros and J.~Hofbauer.
\newblock Planar {S}-systems: Permanence.
\newblock {\em J. Differential Equations}, 266:3787--3817, 2019.

\bibitem{boros:hofbauer:mueller:2017}
B.~Boros, J.~Hofbauer, and S.~M{\"u}ller.
\newblock On global stability of the {L}otka reactions with generalized
  mass-action kinetics.
\newblock {\em Acta Appl. Math.}, 151:53--80, 2017.

\bibitem{boros:hofbauer:mueller:regensburger:2018}
B.~Boros, J.~Hofbauer, S.~M{\"u}ller, and G.~Regensburger.
\newblock The center problem for the {L}otka reactions with generalized
  mass-action kinetics.
\newblock {\em Qual. Theory Dyn. Syst.}, 17:403--410, 2018.

\bibitem{BorosHofbauerMuellerRegensburger2018b}
B.~Boros, J.~Hofbauer, S.~M{\"u}ller, and G.~Regensburger.
\newblock Planar {S}-systems: Global stability and the center problem.
\newblock {\em Discrete Contin. Dyn. Syst. Ser. A}, 2(29):707--727, 2019.

\bibitem{Chaiken1996}
S.~Chaiken.
\newblock Oriented matroid pairs, theory and an electric application.
\newblock In {\em Matroid theory ({S}eattle, {WA}, 1995)}, volume 197 of {\em
  Contemp. Math.}, pages 313--331. Amer. Math. Soc., Providence, RI, 1996.

\bibitem{CraciunDickensteinShiuSturmfels2009}
G.~Craciun, A.~Dickenstein, A.~Shiu, and B.~Sturmfels.
\newblock Toric dynamical systems.
\newblock {\em J. Symbolic Comput.}, 44:1551--1565, 2009.

\bibitem{CraciunFeinberg2005}
G.~Craciun and M.~Feinberg.
\newblock Multiple equilibria in complex chemical reaction networks. {I}. {T}he
  injectivity property.
\newblock {\em SIAM J. Appl. Math.}, 65:1526--1546, 2005.

\bibitem{CraciunGarcia-PuenteSottile2010}
G.~Craciun, L.~Garcia-Puente, and F.~Sottile.
\newblock Some geometrical aspects of control points for toric patches.
\newblock In M.~D\ae{}hlen, M.~S. Floater, T.~Lyche, J.-L. Merrien, K.~Morken,
  and L.~L. Schumaker, editors, {\em Mathematical Methods for Curves and
  Surfaces}, volume 5862 of {\em Lecture Notes in Comput. Sci.}, pages
  111--135, Heidelberg, 2010. Springer.

\bibitem{CMPY2018}
G.~{Craciun}, S.~{M{\"u}ller}, C.~{Pantea}, and P.~Y. {Yu}.
\newblock {A generalization of Birch's theorem and vertex-balanced steady
  states for generalized mass-action systems}.
\newblock 2018.
\newblock \href{https://arxiv.org/abs/1802.06919}{arXiv:1802.06919} [math.DS].

\bibitem{DeLoeraRambauSantos2010}
J.~A. {De Loera}, J.~{Rambau}, and F.~{Santos}.
\newblock {\em {Triangulations. Structures for algorithms and applications.}}
\newblock Berlin: Springer, 2010.

\bibitem{Feinberg1972}
M.~Feinberg.
\newblock Complex balancing in general kinetic systems.
\newblock {\em Arch. Rational Mech. Anal.}, 49:187--194, 1972/73.

\bibitem{FW2012}
E.~Feliu and C.~Wiuf.
\newblock Preclusion of switch behavior in networks with mass-action kinetics.
\newblock {\em Appl. Math. Comput.}, 219:1449--1467, 2012.

\bibitem{Fulton1993}
W.~Fulton.
\newblock {\em Introduction to toric varieties}, volume 131 of {\em Ann. of
  Math. Stud.}
\newblock Princeton University Press, Princeton, NJ, 1993.

\bibitem{GlazmanLjubic1974}
I.~M. Glazman and J.~I. Ljubi\v{c}.
\newblock {\em Finite-dimensional linear analysis: a systematic presentation in
  problem form}.
\newblock The M.I.T. Press, Cambridge, Mass.-London, 1974.

\bibitem{G2009}
G.~Gnacadja.
\newblock Univalent positive polynomial maps and the equilibrium state of
  chemical networks of reversible binding reactions.
\newblock {\em Adv. in Appl. Math.}, 43:394--414, 2009.

\bibitem{G2012}
G.~Gnacadja.
\newblock A {J}acobian criterion for the simultaneous injectivity on positive
  variables of linearly parameterized polynomial maps.
\newblock {\em Linear Algebra Appl.}, 437:612--622, 2012.

\bibitem{GopalkrishnanMillerShiu2014}
M.~Gopalkrishnan, E.~Miller, and A.~Shiu.
\newblock A geometric approach to the global attractor conjecture.
\newblock {\em SIAM J. Appl. Dyn. Syst.}, 13:758--797, 2014.

\bibitem{Gordon1972}
W.~B. {Gordon}.
\newblock {On the diffeomorphisms of Euclidean space.}
\newblock {\em {Am. Math. Mon.}}, 79:755--759, 1972.

\bibitem{Hadamard1906}
J.~{Hadamard}.
\newblock {Sur les transformations ponctuelles.}
\newblock {\em {Bull. Soc. Math. Fr.}}, 34:71--84, 1906.

\bibitem{Horn1972}
F.~Horn.
\newblock Necessary and sufficient conditions for complex balancing in chemical
  kinetics.
\newblock {\em Arch. Rational Mech. Anal.}, 49:172--186, 1972/73.

\bibitem{HornJackson1972}
F.~Horn and R.~Jackson.
\newblock General mass action kinetics.
\newblock {\em Arch. Rational Mech. Anal.}, 47:81--116, 1972.

\bibitem{Jameson2006}
G.~J.~O. Jameson.
\newblock Counting zeros of generalised polynomials: Descartes' rule of signs
  and {L}aguerre's extensions.
\newblock {\em Math. Gaz.}, 90:223--234, 2006.

\bibitem{Johnston2014}
M.~D. Johnston.
\newblock Translated chemical reaction networks.
\newblock {\em Bull. Math. Biol.}, 76:1081--1116, 2014.

\bibitem{Johnston2015}
M.~D. Johnston.
\newblock A computational approach to steady state correspondence of regular
  and generalized mass action systems.
\newblock {\em Bull. Math. Biol.}, 77:1065--1100, 2015.

\bibitem{JohnstonBurton2018}
M.~D. Johnston and E.~Burton.
\newblock Computing weakly reversible deficiency zero network translations
  using elementary flux modes.
\newblock 2018.
\newblock Submitted, \href{https://arxiv.org/abs/1808.09059}{arXiv:1808.09059}
  [math.OC].

\bibitem{JohnstonMuellerPantea2018}
M.~D. Johnston, S.~M{\"u}ller, and C.~Pantea.
\newblock A deficiency-based approach to parametrizing positive equilibria of
  biochemical reaction systems.
\newblock {\em Bull. Math. Biol.}, 81:1143--1172, 2019.

\bibitem{Khovanskii1991}
A.~G. Khovanski{\u{i}}.
\newblock {\em Fewnomials}, volume~88 of {\em Translations of Mathematical
  Monographs}.
\newblock American Mathematical Society, Providence, RI, 1991.

\bibitem{Laguerre1883}
E.~N. Laguerre.
\newblock M{\'e}moire sur la th{\'e}orie des {\'e}quations num{\'e}riques.
\newblock {\em J. Math. Pures et Appl. (3)}, 9:99--146, 1883.

\bibitem{Minty1974}
G.~J. {Minty}.
\newblock {A ``from scratch'' proof of a theorem of Rockafellar and Fulkerson.}
\newblock {\em {Math. Program.}}, 7:368--375, 1974.

\bibitem{MFR2016}
S.~M{\"u}ller, E.~Feliu, G.~Regensburger, C.~Conradi, A.~Shiu, and
  A.~Dickenstein.
\newblock Sign conditions for injectivity of generalized polynomial maps with
  applications to chemical reaction networks and real algebraic geometry.
\newblock {\em Found. Comput. Math.}, 16:69--97, 2016.

\bibitem{mueller:regensburger:2012}
S.~M\"uller and G.~Regensburger.
\newblock Generalized mass action systems: {C}omplex balancing equilibria and
  sign vectors of the stoichiometric and kinetic-order subspaces.
\newblock {\em SIAM J. Appl. Math.}, 72:1926--1947, 2012.

\bibitem{mueller:regensburger:2014}
S.~M\"uller and G.~Regensburger.
\newblock Generalized mass-action systems and positive solutions of polynomial
  equations with real and symbolic exponents.
\newblock In V.~P. Gerdt, W.~Koepf, E.~W. Mayr, and E.~H. Vorozhtsov, editors,
  {\em Computer Algebra in Scientific Computing. Proceedings of the 16th
  International Workshop (CASC 2014)}, volume 8660 of {\em Lecture Notes in
  Comput. Sci.}, pages 302--323, Cham, 2014. Springer.

\bibitem{MuellerRegensburger2016}
S.~M\"uller and G.~Regensburger.
\newblock Elementary vectors and conformal sums in polyhedral geometry and
  their relevance for metabolic pathway analysis.
\newblock {\em Front. Genet.}, 7(90):11 pages, 2016.

\bibitem{PachterSturmfels2005}
L.~Pachter and B.~Sturmfels.
\newblock Statistics.
\newblock In {\em Algebraic statistics for computational biology}, pages 3--42.
  Cambridge Univ. Press, New York, 2005.

\bibitem{Richter-GebertZiegler1997}
J.~Richter-Gebert and G.~M. Ziegler.
\newblock Oriented matroids.
\newblock In {\em Handbook of discrete and computational geometry}, pages
  111--132. CRC, Boca Raton, FL, 1997.

\bibitem{Rockafellar1969}
R.~T. Rockafellar.
\newblock The elementary vectors of a subspace of {$R^{N}$}.
\newblock In {\em Combinatorial {M}athematics and its {A}pplications ({P}roc.
  {C}onf., {U}niv. {N}orth {C}arolina, {C}hapel {H}ill, {N}.{C}., 1967)}, pages
  104--127. Univ. North Carolina Press, Chapel Hill, N.C., 1969.

\bibitem{Rockafellar1970}
R.~T. Rockafellar.
\newblock {\em Convex analysis}.
\newblock Princeton University Press, Princeton, N.J., 1970.

\bibitem{Savageau1969}
M.~A. Savageau.
\newblock {B}iochemical systems analysis. {I}. {S}ome mathematical properties
  of the rate law for the component enzymatic reactions.
\newblock {\em J. Theor. Biol.}, 25:365--369, 1969.

\bibitem{Sotille2011}
F.~Sottile.
\newblock {\em Real Solutions to Equations from Geometry}.
\newblock American Mathematical Society, Providence, RI, 2011.

\bibitem{Struik1969}
D.~J. Struik, editor.
\newblock {\em {A source book in mathematics, 1200-1800}}.
\newblock {Source Books in the History of the Sciences. Cambridge, Mass.:
  Harvard University Press, XIV}, 1969.

\bibitem{Sturmfels2002}
B.~Sturmfels.
\newblock {\em Solving systems of polynomial equations}.
\newblock CBMS Regional Conf. Ser. in Math. Conference Board of the
  Mathematical Sciences, Washington, DC, 2002.

\bibitem{TalabisArceoMendoza2018}
D.~A. S.~J. Talabis, C.~P.~P. Arceo, and E.~R. Mendoza.
\newblock Positive equilibria of a class of power-law kinetics.
\newblock {\em J. Math. Chem.}, 56:358--394, 2018.

\bibitem{TonelloJohnston2018}
E.~Tonello and M.~D. Johnston.
\newblock Network {T}ranslation and {S}teady-{S}tate {P}roperties of {C}hemical
  {R}eaction {S}ystems.
\newblock {\em Bull. Math. Biol.}, 80:2306--2337, 2018.

\bibitem{Voit2013}
E.~O. Voit.
\newblock {Biochemical systems theory: a review.}
\newblock {\em {ISRN Biomath.}}, 2013.
\newblock Article ID 897658.

\bibitem{Ziegler1995}
G.~M. Ziegler.
\newblock {\em Lectures on polytopes}.
\newblock Springer-Verlag, New York, 1995.

\end{thebibliography}

\end{document}